\definecolor{mylinkcolor}{rgb}{0.8,0,0}
\definecolor{myurlcolor}{rgb}{0,0,0.8}
\definecolor{mycitecolor}{rgb}{0,0,0.8}
\newtheorem{defn}{Definition}[section]
\newtheorem{corollary}[defn]{Corollary}
\newtheorem{lemma}[defn]{Lemma}
\newtheorem{thm}[defn]{Theorem}
\newtheorem{theorem}[defn]{Theorem}
\newtheorem{cor}[defn]{Corollary}
\newtheorem{prop}[defn]{Proposition}
\newtheorem{conj}[defn]{Conjecture}
\theoremstyle{definition}
\newtheorem*{ack}{Acknowledgements}
\newtheorem{remark}[defn]{Remark}
\newtheorem{question}[defn]{Question}
\newtheorem{example}[defn]{Example}
\newcommand{\Q}{\mathbb Q}
\newcommand{\Z}{\mathbb Z}
\newcommand{\QQ}{\mathbb Q}
\newcommand{\ZZ}{\mathbb Z}
\renewcommand{\NN}{\mathbb N}
\newcommand{\FF}{\mathbb F}
\newcommand{\Rats}{\mathbb{Q}}
\newcommand{\SL}{\operatorname{SL}}
\newcommand{\Ker}{\operatorname{Ker}}
\newcommand{\Gal}{\operatorname{Gal}}
\newcommand{\Aut}{\operatorname{Aut}}
\newcommand{\GQ}{\Gal(\overline{\Rats}/\Rats)}
\newcommand{\GL}{\operatorname{GL}}
\newcommand{\PGL}{\operatorname{PGL}}
\newcommand{\tor}{\mathrm{tors}}
\renewcommand{\Im}{{\rm Im}\,}
\begin{document}

\title[Coincidences of division fields]{Coincidences of division fields}


\author{Harris B. Daniels}
\address{Department of Mathematics, Amherst College, Amherst, MA 01002}
\email{hdaniels@amherst.edu}
\urladdr{www3.amherst.edu/$\sim$hdaniels/} 

\author{\'Alvaro Lozano-Robledo}
\address{Department of Mathematics, University of Connecticut, Storrs, CT 06269}
\email{alvaro.lozano-robledo@uconn.edu}
\urladdr{www.alozano.clas.uconn.edu/} 
\subjclass[2010]{Primary: 14H52, Secondary: 11G05.}

\maketitle

\begin{abstract}
	Let $E$ be an elliptic curve defined over $\Q$, and let $\rho_E\colon \Gal(\overline{\Q}/\Q)\to \GL(2,\widehat{\Z})$ be the adelic representation associated to the natural action of Galois on the torsion points of $E(\overline{\Q})$. By a theorem of Serre, the image of $\rho_{E}$ is open, but the image is always of index at least $2$ in $\GL(2,\widehat{\Z})$ due to a certain quadratic entanglement amongst division fields. In this paper, we study other types of abelian entanglements. More concretely, we classify the elliptic curves $E/\Q$, and primes $p$ and $q$ such that $\Q(E[p])\cap \Q(\zeta_{q^k})$ is non-trivial, and determine the degree of the coincidence. As a consequence, we classify all elliptic curves $E/\Q$ and integers $m,n$ such that the $m$-th and $n$-th division fields coincide, i.e., when $\Q(E[n])=\Q(E[m])$, when the division field is abelian.
\end{abstract}

\section{Introduction}

Let $E/\QQ$ be an elliptic curve, let $n>1$ be an integer, let $\overline{\Q}$ be a fixed algebraic closure of $\Q$, and let $\Q(E[n])\subset\overline{\Q}$ be the $n$-th division field, i.e., $\Q(E[n])$ is the field of definition of the $n$-torsion subgroup $E[n]\subseteq E(\overline{\Q})$. The absolute Galois group of $\Q$, hereby denoted by $G_\Q=\Gal(\overline{\Q}/\Q)$, acts on $E[n]$ and induces a Galois representation $\rho_{E,n}\colon G_\Q \to \Aut(E[n])\simeq \GL(2,\ZZ/n\ZZ)$. If $p$ is a prime, then $G_\Q$ acts on the Tate $p$-adic module $T_p(E)=\varprojlim E[p^n]$, and on the  $\widehat{\ZZ}$-module $T(E)=\varprojlim E[n]$, and induces $p$-adic representations $\rho_{E,p^{\infty}}\colon G_\Q \to \GL(2,\ZZ_p)$, and an adelic Galois representation $$\rho_E \colon G_\Q \to \Aut(T(E))\simeq \GL(2,\widehat{\ZZ}).$$
There has been much recent work and interest in understanding the image of the various Galois representations mentioned above (see for example \cite{serre1, RZB, zywina1, zywina2, Sutherland2}). 

Famously, Serre \cite{serre1} showed that if $E/\QQ$ has no complex multiplication, then the image $G_E$ of $\rho_{E}$ is open (therefore, of finite index) in $\GL(2,\widehat{\ZZ})$. Further, it is well-known (pointed out by Serre in \cite{serre1}, Proposition 22) that the index $d_E=[\GL(2,\widehat{\ZZ}):G_E]\geq 2$. Indeed, if $\Delta_E$ is the minimal discriminant of $E/\Q$, then $\Q(\sqrt{\Delta_E})\subseteq \Q(E[2])$ and there is also some $m>2$ (the integer $m=4|\Delta_E|$ works) such that $\Q(\sqrt{\Delta_E})\subseteq \Q(\zeta_m)\subseteq \Q(E[m])$, so that $\Q(E[2])\cap \Q(\zeta_m)$ is a non-trivial quadratic extension of $\Q$, and therefore $\Gal(\Q(E[2],\zeta_m))\subsetneq \Gal(\Q(E[2])/\Q)\times \Gal(\Q(\zeta_m)/\Q).$ This {\it entanglement} of division fields causes the index $d_E$ to be at least $2$. When the index $d_E$ is exactly $2$, then we say that $E$ is a Serre curve, and these have been studied in \cite{Jones, Jones3, Daniels}, for instance. 

It is therefore natural to study other types of entanglements of division fields that would cause $d_E$ to be strictly larger than $2$. For instance, Brau and Jones \cite{brau}, and Morrow \cite[Theorem 8.7]{morrow} have classified all elliptic curves $E/\Q$ such that $\Q(E[2])\subseteq \Q(E[3])$ (in fact, their work classifies all the possible non-abelian fields that can occur as $\Q(E[2])\cap\Q(E[3])$). In this paper, we consider the following question:

\begin{question}\label{ques:main}
Fix an elliptic curve $E/\QQ$, and distinct integers $n,m\geq 2$:
\begin{enumerate}
	\item  Are there distinct integers $n,m\geq 2$ such that $\QQ(E[n]) = \QQ(E[m])$? 
	\item  In light of the entanglement described above, are there distinct prime numbers $p$ and $q$, and $k\geq 1$, such that $\Q(E[p])\cap \Q(\zeta_{q^k})$ is non-trivial?
	\end{enumerate}  
If so, can we classify all the elliptic curves $E$ for which (1) or (2) occurs? Note that (1) can be interpreted vertically (in towers, i.e., $n$ divides $m$) or horizontally ($\gcd(n,m)=1$). We will address both possibilities. 
\end{question}

There has been prior work on abelian entanglements related to Question \ref{ques:main}, part (2). In \cite{LR+GJ}, Gonz\'alez-Jim\'enez and the second author classified all elliptic curves such that the full $n$-th division field $\Q(E[n])$ is an abelian extension of $\Q$. More generally,  Chou \cite{chou} has classified the torsion subgroups $E(\Q^{\text{ab}})_{\text{tors}}$ that can occur for elliptic curves $E$ over $\Q$, where $\Q^{\text{ab}}$ is the maximal abelian extension of $\Q$ within a fixed algebraic closure. Here we shall extend these works by studying the possibilities for $\Q(E[p])\cap \Q^{\text{ab}}$.

It is worth noting that, by results of \cite{Duke,Jones}, almost all elliptic curves are Serre curves (that is, $d_E=2$). In particular, for almost all elliptic curves $E/\QQ$ we have that $\Gal(\QQ(E[n])/\QQ)\simeq \GL(2,\ZZ/n\ZZ)$, for all odd $n\geq 2$, and so comparing their degrees one can see that there are no $m\neq n\geq 2$ such that $\QQ(E[n]) = \QQ(E[m])$. Similarly, it follows that for a Serre curve $\Q(E[p])\cap \Q(\zeta_{q^k})$ is always trivial for all odd primes $p\neq q$, and all $k\geq 1$.
Thus, examples of coincidences of division fields should be somewhat rare. Nonetheless, with a simple search one can find some examples of such behavior. 

\begin{example}
Let $E$ be the elliptic curve with Cremona label \href{https://www.lmfdb.org/EllipticCurve/Q/486d2/}{\texttt{486d2}} which is given by $y^2=x^3+405x-9882$, and let $L$ be the splitting field of $x^3+3$, i.e., $L=\QQ(\zeta_3,\sqrt[3]{3})$, with $\zeta_3$ a primitive $3$-rd root of unity. Then, as we shall see below,
$$\QQ(E[2])=\QQ(E[3])=\QQ(E[6])=L.$$
\end{example}

\begin{example}\label{ex-2}
Let $E$ be the elliptic curve with Cremona label \href{https://www.lmfdb.org/EllipticCurve/Q/40a4/}{\texttt{40a4}} which is given by $y^2 = x^3+13x-34$. A simple computation shows that $$E(\QQ)_\tor=\langle (7,-20) \rangle\simeq\ZZ/4\ZZ.$$ Moreover, we have $x^3+13x-34=(x-2)(x^2+2x+17)$, where the discriminant of the quadratic factor is $-64$, and so $\QQ(E[2])=\QQ(i)$. Next, factoring the 4-division polynomial of $E$ we obtain
$$f_4(x) = 8 (x - 7)  (x - 2)  (x + 3)  (x^2 - 2x + 5)  (x^2 + 2x + 17)  (x^2 + 6x + 109)$$ 
which splits completely over $\QQ(i)$ and, further, one can verify that $\QQ(x(E[4]))=\QQ(E[4])$. Thus, 
$$E(\QQ(i))_\tor=\langle (7,-20),(-3-10i, 30+10i) \rangle\simeq \ZZ/4\ZZ\times\ZZ/4\ZZ.$$
Therefore, $\QQ(E[2])=\QQ(E[4])=\QQ(i)$. Stevenhagen asked when can one have $\QQ(E[2^{n}])=\QQ(E[2^{n+1}])$ and Rouse and Zureick-Brown  comment in a recent paper (see \cite[Remark 1.6]{RZB}) that, as a consequence of their work, this can only happen for $n=1$ (that is, for $\QQ(E[2])=\QQ(E[4])$), and give some examples.
\end{example}

Our first result addresses Question \ref{ques:main} in the setting of towers. 

\begin{theorem}\label{thm:main_vertical}
	Let $E$  be an elliptic curve defined over $\QQ$, let $p$ a prime, and let $n\in\NN$.
	
	\begin{enumerate}
		\item Suppose that $\QQ(E[p^{n+1}]) = \QQ(E[p^n])$. Then,  $p=2$, $n=1$, and there is a rational number $t\in\QQ$ such that $E$ is isomorphic over $\QQ$ to an elliptic curve of the form 
	$$E_t\colon y^2 = x^3 + A(t)x + B(t),$$ where 
	\begin{align*}
	A(t) &= -27t^8+648t^7-4212t^6-2376t^5+60102t^4+79704t^3-105732t^2-235224t-107811,\\
	B(t) &= 54t^{12}-1944t^{11}+24300t^{10}-97848t^9-251262t^8+1722384t^7+4821768t^6\\&\ \ \ -8697456t^5-64323558t^4-140447736t^3-157012020t^2-90561240t-21346578.
	\end{align*}
	
	\item If  $\Q(E[p^n])\cap \Q(\zeta_{p^{n+1}}) = \Q(\zeta_{p^{n+1}})$, then $p=2$.
	\end{enumerate}  
\end{theorem}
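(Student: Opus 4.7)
The plan is to prove Part (2) first via a determinant-and-inertia argument, then deduce Part (1) using the Rouse--Zureick-Brown classification of 2-adic images.

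\textbf{Part (2).} Set $G_k := \rho_{E,p^k}(G_\QQ)$ and let $K$ denote the kernel of the reduction $G_{n+1}\twoheadrightarrow G_n$. Since $\det\circ\rho_{E,p^k}=\chi_{p^k}$ is the mod-$p^k$ cyclotomic character, the containment $\QQ(\zeta_{p^{n+1}})\subseteq\QQ(E[p^n])$ is equivalent to $\chi_{p^{n+1}}$ factoring through $\rho_{E,p^n}$, i.e.\ to $\det|_K\equiv 1$. Under the canonical identification $K\hookrightarrow M_2(\FF_p)$ given by $I+p^nM\mapsto M$, the determinant becomes the trace, so the hypothesis becomes $K\subseteq\mathfrak{sl}_2(\FF_p)$. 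To rule this out for odd $p$, I pass to inertia at $p$: the extension $\QQ_p(\zeta_{p^{n+1}})/\QQ_p$ is totally ramified of degree $p^n(p-1)$, so $\chi_{p^{n+1}}|_{I_p}$ surjects onto $(\ZZ/p^{n+1}\ZZ)^\ast$. The factoring hypothesis therefore forces $\rho_{E,p^n}(I_p)\subseteq G_n$ to admit an abelian quotient whose $p$-part has order at least $p^n$. A case analysis on the reduction type of $E$ at $p$ (good ordinary, good supersingular, multiplicative, and additive after a tame twist to reach semistability) using standard descriptions of $\rho_{E,p^n}|_{I_p}$ shows that the $p$-part of the abelianization of the inertia image is always bounded by $p^{n-1}$: in the upper-triangular cases the commutator subgroup absorbs the entire wild-inertia unipotent and the remaining diagonal contributes only $\chi_p\bmod p^n$, while in the supersingular case the mod-$p$ image has order coprime to $p$. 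This contradicts the needed lower bound.

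\textbf{Part (1).} If $\QQ(E[p^{n+1}])=\QQ(E[p^n])$, then $\QQ(\zeta_{p^{n+1}})\subseteq \QQ(E[p^n])$, so Part (2) forces $p=2$. To show $n=1$, I invoke the Rouse--Zureick-Brown classification, as already cited in the Remark following Example~\ref{ex-2}: inspection of their tables shows that the coincidence $|G_{n+1}|=|G_n|$ occurs only at level $n=1$. This reduces the problem to classifying $E/\QQ$ with $\QQ(E[2])=\QQ(E[4])$, equivalently with the mod-$4$ image $G_2$ injecting into $G_1$. From the RZB tables I identify the unique conjugacy class of subgroups $H\subseteq \GL(2,\ZZ/4\ZZ)$ that arises from some elliptic curve $E/\QQ$ with this property. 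The associated modular curve $X_H$ is a rational curve with a $\QQ$-parameter $t$, and lifting the universal elliptic curve over $X_H$ to short Weierstrass form (after normalizing the twist and clearing denominators) produces exactly the family $E_t : y^2=x^3+A(t)x+B(t)$ with the displayed polynomials.

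\textbf{Main obstacle.} The genuinely hard step is the inertia-at-$p$ analysis in Part (2); in particular, the prime $p=3$ with potentially wild additive reduction requires combining the local description of the inertia action with explicit classifications of mod-$3$ Galois images (e.g.\ by Zywina) to secure the uniform bound on the abelianization. Once Part (2) is in place, Part (1) is a largely computational extraction from the RZB parametric model, with the only subtlety being to single out the correct twist so that the universal curve takes the stated polynomial form.
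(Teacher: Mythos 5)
Your proposal pursues a genuinely different strategy for Part (2) than the paper. The paper's argument (Proposition~\ref{prop-pnroots} via Lemmas~\ref{lem-max_prime_powers}, \ref{lem-ker_orders}, \ref{lem:det_is_sq}) is purely matrix-theoretic: it takes a Frobenius-type element $\sigma$ with $\rho_{E,p^n}(\sigma)$ of order divisible by $\varphi(p^{n+1})$ and shows directly that its determinant must be a square mod $p$, contradicting the fact that it must generate $(\ZZ/p^n\ZZ)^\times$. You instead localize at $p$ and constrain the inertia image, which is a reasonable idea and your opening translation (reducing to $\det$ trivial on the reduction kernel, hence to $K\subseteq\mathfrak{sl}_2(\FF_p)$) is correct. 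However, your crucial uniform bound is false as stated: you claim ``the $p$-part of the abelianization of the inertia image is always bounded by $p^{n-1}$,'' justifying the supersingular case only by ``the mod-$p$ image has order coprime to $p$.'' In the good supersingular case the inertia image mod $p^n$ sits inside (and for $p\geq5$ equals) the nonsplit Cartan $(\mathcal{O}/p^n\mathcal{O})^\times$ with $\mathcal{O}$ the unramified quadratic extension of $\ZZ_p$; this group is \emph{abelian}, so it equals its own abelianization, and its $p$-Sylow has order $p^{2(n-1)}$, which is $\geq p^n$ for $n\geq 2$ and therefore violates your asserted bound. The fact that the mod-$p$ image is prime-to-$p$ says nothing about the $p$-part of the mod-$p^n$ abelianization. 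The argument can be repaired by replacing the abelianization size by the size of the largest \emph{cyclic} $p$-power quotient (which for $(\mathcal{O}/p^n\mathcal{O})^\times$ is only $\ZZ/p^{n-1}\ZZ$ since the $p$-part is $(\ZZ/p^{n-1}\ZZ)^2$), but you would need to carry out that reformulation; and as you acknowledge, the wildly additive $p=3$ case is left unresolved, whereas the paper's group-theoretic proof handles all odd $p$, all $n$, and all reduction types uniformly with no case analysis.

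For Part (1) you follow the same route as the paper (Propositions~\ref{prop:even_vert}), namely an exhaustive check of the Rouse--Zureick-Brown 2-adic image list, but there is a real omission: Theorem~\ref{thm-rzb} applies only to elliptic curves \emph{without} complex multiplication, and you invoke it without restriction. The paper must treat CM curves separately (Proposition~\ref{prop-CMvert}), first ruling out $\QQ(E[2])=\QQ(E[4])$ by showing no rational CM $j$-invariant lies on the genus-$0$ curve $X_{20b}$, and then ruling out $\QQ(E[2^n])=\QQ(E[2^{n+1}])$ for $n\geq2$ via degree computations from the classification of CM division fields. Without that branch your deduction that $n=1$ is incomplete. (A smaller point: the search yields \emph{two} conjugacy classes of mod-$4$ images with the coincidence, $\tt X_{20b}$ and its sub-cover $\tt X_{60d}$ per Remark~\ref{rem-groupG}, not a unique one, although the displayed family $E_t$ does cover both.)
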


Theorem \ref{thm:main_vertical} will be shown in Section \ref{sec-towers}. Interestingly, $\Q(E[2^n])\cap \Q(\zeta_{2^{n+1}}) = \Q(\zeta_{2^{n+1}})$ can indeed occur for all $n>1$, as we will show at the end of Section \ref{sec:even_vert_CM} (see Theorem \ref{thm-2nroots}).

\begin{theorem}\label{thm-2nroots-intro}
	Let $E$ be the elliptic curve with Cremona label \href{https://www.lmfdb.org/EllipticCurve/Q/32a3/}{\texttt{32a3}}, which is given by $y^2=x^3-11x-14$. Then, $\Q(\zeta_{2^{n+1}})\subseteq \Q(E[2^n])$ for all $n>1$.
\end{theorem}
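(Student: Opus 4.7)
The strategy is to settle $n=2$ by an elementary 2-torsion calculation and then leverage the complex multiplication of $E$ to handle all larger $n$ uniformly.

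First I will factor the 2-division polynomial: $x^3-11x-14=(x+2)(x^2-2x-7)$, where the quadratic factor has discriminant $32$. This gives $\Q(E[2])=\Q(\sqrt 2)$. Combined with the Weil-pairing containment $\Q(i)=\Q(\zeta_4)\subseteq \Q(E[4])$, we obtain $\Q(\zeta_8)=\Q(\sqrt 2, i)\subseteq \Q(E[4])$, which settles the case $n=2$.

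For $n\ge 3$ I plan to use the fact that $j(E)=287496$ is the CM $j$-invariant of discriminant $-16$, so that $E$ has CM by the order $\mathcal O=\Z[2i]$ of conductor $2$ in $K=\Q(i)$. The CM endomorphism $\alpha=[2i]$ satisfies $\alpha^2=[-4]$, which vanishes on $E[2]$; since $\deg\alpha=|E[2]|=4$, this forces $\ker\alpha=E[2]$, and hence on the Tate module $T_2(E)$ one may factor $\alpha=[2]\circ\beta$ for a $\Z_2$-linear endomorphism $\beta$ with $\beta^2=-1$. This endows $T_2(E)$ with the structure of a free rank-one module over $\Z_2[i]$, and forces $\rho_{E,2^\infty}|_{G_K}$ to factor through the Cartan $\Z_2[i]^*\subset \GL(2,\Z_2)$.

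By the theory of complex multiplication, this abelian representation is the 2-adic component of a Hecke Gr\"ossencharacter $\psi$ of $K$; the conductor $\mathfrak f_\psi$ is supported at $\mathfrak p=(1+i)$ and can be read off from the conductor $N_E=32$ of $E/\Q$ via the relation $N_E=|\mathrm{disc}(K)|\cdot N_{K/\Q}(\mathfrak f_\psi)$. A class-field-theoretic comparison then identifies the fixed field of $\rho_{E,2^n}|_{G_K}$ as an abelian extension of $K$ that contains the ray class field $K_{\mathfrak p^{n+2}}=\Q(\zeta_{2^{n+1}})\cdot K$ for all $n\ge 2$. Since $K=\Q(i)\subseteq \Q(E[4])\subseteq \Q(E[2^n])$ for $n\ge 2$, we conclude $\zeta_{2^{n+1}}\in K(E[2^n])=\Q(E[2^n])$, as desired. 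The principal obstacle is the local analysis at the ramified prime $\mathfrak p=(1+i)$ dividing $\mathfrak f_\psi$, which lies outside the scope of the standard CM formulas assuming $\mathfrak p\nmid \mathfrak f_\psi$; this can be handled either by a direct computation of the decomposition-group action on $T_2(E)$ using the $\Z_2[i]$-module structure derived above, or, more concretely, by verifying the inclusion for $n=3$ via the 8-division polynomial and then appealing to the stability of the 2-adic Galois image for CM elliptic curves to propagate the statement to all $n\ge 3$.
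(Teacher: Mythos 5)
Your plan runs into a genuine error at the very point where it replaces the $n=2$ computation by CM theory. You write that since $\alpha=[2i]$ has degree $4=|E[2]|$ and $\alpha^2=[-4]$ kills $E[2]$, the kernel of $\alpha$ must be $E[2]$. That inference is false: a degree-$4$ isogeny of an elliptic curve can perfectly well have a \emph{cyclic} kernel of order $4$, and in fact it must here. If one had $\ker\alpha=E[2]$, then $\alpha$ would factor as $u\circ[2]$ for an automorphism $u\in\Aut(E)=\{\pm1\}$, forcing $\alpha=\pm[2]$, which contradicts $\alpha=[2i]$. Equivalently, $2$ does not divide $2i$ in $\End(E)=\Z[2i]$, so the factorization $\alpha=[2]\circ\beta$ you need does not exist, and $T_2(E)$ does \emph{not} carry a $\Z_2[i]$-module structure --- only a $\Z_2[2i]$-module structure, because $E$ has CM by the non-maximal order of conductor $2$. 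The entire subsequent argument (Cartan image in $\Z_2[i]^*$, free rank-one $\Z_2[i]$-module, identification with a Gr\"ossencharacter with the stated conductor relation) silently assumes CM by the maximal order and is not valid for $j(E)=287496$. You also flag, correctly, that even in the maximal-order setting the standard CM formulas at the ramified prime $\mathfrak p=(1+i)$ would require separate treatment; the proposed fallback (check $n=3$ by hand and ``appeal to stability of the $2$-adic image'') is not a proof without citing and verifying the hypotheses of a concrete such stability statement.

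Your $n=2$ step is correct ($\Q(E[2])=\Q(\sqrt2)$ and $\Q(\zeta_4)\subseteq\Q(E[4])$ give $\Q(\zeta_8)\subseteq\Q(E[4])$), and the overall strategy --- exhibit a large abelian subfield ramified only at $2$ and pin it down as cyclotomic --- is in the right spirit. But the paper sidesteps all the CM subtleties by taking as input the \emph{explicit} $2$-adic image of this curve from \cite{lozano1} (Example~9.4), which already accounts for the non-maximal order and the ramified prime. From explicit generators $A,B,C$ of the image, the commutator $D=ACAC^{-1}$ is computed, the quotient $G/\langle D\rangle$ modulo $2^n$ is shown to be abelian of type $\Z/2\Z\times\Z/2^{n-1}\Z$, and the corresponding abelian subfield $F_n\subseteq\Q(E[2^n])$, being ramified only at $2$ by N\'eron--Ogg--Shafarevich, must be $\Q(\zeta_{2^{n+1}})$. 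If you want to salvage your CM route, you would need to set up the correct $\Z_2[2i]$-module structure on $T_2(E)$, determine the image of $G_{\Q(i)}$ inside the non-maximal $2$-adic Cartan $(\Z_2[2i])^*$, and carry out the ramified local class field theory at $(1+i)$ explicitly --- which is essentially what the cited explicit $2$-adic image already encodes.
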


Our next results address Question \ref{ques:main} in a horizontal way. First, we remark that ramification and good reduction impose strong restrictions on equality of division fields.

\begin{prop}
	Let $E/\QQ$ be an elliptic curve and let $m\geq 2$ be an integer, such that there is a prime $p$ of good reduction for $E/\Q$ that does not divide $m$. Then,  $\Q(E[m])\cap \Q(\zeta_{p^{\infty}})=\Q$. In particular, if $n\geq 2$ is another integer divisible by $p$, then $\Q(E[n])=\Q(E[m])$ is impossible.
\end{prop}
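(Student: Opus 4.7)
The plan is a ramification argument. For the first assertion, since $E$ has good reduction at $p$ and $p\nmid m$, the N\'eron--Ogg--Shafarevich criterion applied to each prime-power factor $\ell^{v_\ell(m)}$ of $m$ (with $\ell\neq p$) shows that $\rho_{E,\ell^{v_\ell(m)}}$ is unramified at $p$; taking compositum over $\ell\mid m$, the extension $\Q(E[m])/\Q$ is unramified at $p$. On the other hand, for every $k\geq 1$ the cyclotomic extension $\Q(\zeta_{p^k})/\Q$ is totally ramified at $p$ (the ramification index equals the full degree $\varphi(p^k)$), and hence every subfield of $\Q(\zeta_{p^\infty})$ strictly larger than $\Q$ is itself ramified at $p$. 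The intersection $F=\Q(E[m])\cap \Q(\zeta_{p^\infty})$ is therefore simultaneously unramified at $p$ and, unless $F=\Q$, ramified at $p$; we conclude $F=\Q$.

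For the ``in particular'' claim, assume for contradiction that $\Q(E[n])=\Q(E[m])$ and let $p^a$ denote the exact $p$-part of $n$, so that $a\geq 1$. The Galois-equivariance of the Weil pairing $e_n\colon E[n]\times E[n]\to \mu_n$ gives $\Q(\zeta_n)\subseteq \Q(E[n])$, and in particular $\Q(\zeta_{p^a})\subseteq \Q(E[n])=\Q(E[m])$. Combining with the first assertion, $\Q(\zeta_{p^a})\subseteq \Q(E[m])\cap \Q(\zeta_{p^\infty})=\Q$, forcing $p^a\leq 2$. Since $p^a\geq p\geq 2$, this is a contradiction except in the residual case $p=2$ and $a=1$.

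The main obstacle is precisely this residual case $p=2$ with $n\equiv 2\pmod{4}$, where $\zeta_{p^a}=\zeta_2=-1\in\Q$ and the Weil pairing yields no new information. To handle it, I would argue separately that the inclusion $\Q(E[2])\subseteq \Q(E[m])$ (which would follow from $\Q(E[n])=\Q(E[m])$ together with the compositum decomposition $\Q(E[2k])=\Q(E[2])\cdot \Q(E[k])$, writing $n=2k$ with $k$ odd) cannot occur. Since $\Q(E[m])/\Q$ is unramified at $2$, such an inclusion would force $\Q(E[2])/\Q$ to be unramified at $2$; I would rule this out by a local analysis of the image of the inertia subgroup $I_2$ in $\GL(2,\FF_2)$ under good reduction at $2$, invoking the action through the fundamental characters of level two in the supersingular case, together with a finite-flat-group-scheme analysis of the extension $0\to \mu_2\to \mathcal{E}[2]\to \ZZ/2\ZZ\to 0$ in the ordinary case, to conclude that $\rho_{E,2}|_{I_2}$ is nontrivial.
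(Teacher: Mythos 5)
Your ramification argument for the first assertion is correct and coincides with the paper's: N\'eron--Ogg--Shafarevich shows that $\Q(E[m])/\Q$ is unramified at $p$, while every nontrivial subfield of $\Q(\zeta_{p^\infty})$ is ramified at $p$, so the intersection is trivial. For the ``in particular'' claim, the paper's proof is exactly the Weil-pairing step $\Q(\zeta_p)\subseteq\Q(E[n])$ followed by the words ``the result follows''; you carry out the same step but, unlike the published proof, correctly flag that it only yields a contradiction when $\zeta_{p^a}\notin\Q$, leaving the residual case $p=2$, $n\equiv 2\pmod 4$ unresolved.

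The route you propose to close that gap, however, cannot work: the claim that $\rho_{E,2}|_{I_2}$ is nontrivial whenever $E/\Q$ has good reduction at $2$ is false. The split extension $0\to\mu_2\to\mathcal{E}[2]\to\Z/2\Z\to 0$ is a perfectly good finite flat group scheme over $\Z_2$, and it realizes a trivial inertia action on $E[2]$. Concretely, the curve $E\colon y^2+xy+y=x^3+x^2-10x-10$ has conductor $15$ and minimal discriminant $3^4\cdot 5^4$, hence good reduction at $2$, and its three nontrivial $2$-torsion points have rational coordinates (the $x$-coordinates are $-1$, $3$, $-13/4$), so $\Q(E[2])=\Q$. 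Taking $p=2$, $m=3$, $n=6$ for this $E$ gives $\Q(E[6])=\Q(E[2])\cdot\Q(E[3])=\Q(E[3])$, so the ``in particular'' conclusion actually fails. What you isolated is therefore not merely a gap in your argument but a gap in the proposition and in the paper's proof of it: a correct statement must exclude $p=2$ with $n\equiv 2\pmod 4$, or impose an extra hypothesis such as $\Q(E[2])\neq\Q$ in that case.
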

\begin{proof}
	If $p$ is a prime of good reduction and $\gcd(p,m)=1$, then the criterion of N\'eron--Ogg--Shafarevich shows that $\Q(E[m])/\Q$ is unramified at $p$. Thus, $\Q(E[m])\cap \Q(\zeta_{p^{\infty}})=\Q$ by ramification-at-$p$ considerations. If in addition $p$ divides $n$, then $\Q(\zeta_{p})\subseteq \Q(E[p])\subseteq \Q(E[n])$, and the result follows.
\end{proof}

The following theorem answers the question for the intersection of prime division fields for two different primes. 

\begin{theorem}\label{thm:main_horizontal}
	Let $E/\QQ$ be an elliptic curve and let $p<q\in \ZZ$ be distinct primes, and let $n,m\in\NN$. 
	\begin{enumerate}
		\item If $\QQ(E[p^n]) = \QQ(E[q^m])$, then $p^n = 2$ and $q^m=3$. Further, there is some $t\in\QQ$ such that $E$ is $\QQ$-isomorphic to 
	\begin{align*}
	E'\colon y^2 = &x^3 -3 t^9 (t^3 - 2) (t^3 + 2)^3 (t^3 + 4)x\\ 
	&-2 t^{12} (t^3 + 2)^4 (t^4 - 2 t^3 + 4 t - 2) (t^8 + 2 t^7 + 4 t^6 + 8 t^5 + 10 t^4 + 8 t^3 + 16 t^2 + 8 t + 4)
	\end{align*}
	or its twist by $-3$.
	\item Let $K_p(E)=\Q(E[p])\cap \Q^{\text{ab}}$. Then, $\Gal(K_p(E)/\Q)\simeq (\Z/p\Z)^\times \times C$, where $C$ is a cyclic group of order dividing $p-1$. Further, if $E/\Q$ does not have a rational $p$-isogeny, then $C$ is trivial or quadratic and $K_p(E)=F(\zeta_p)$ with $F/\Q$ a trivial or quadratic extension. 
	\item In particular, if $\Q(\zeta_{q^n})\subseteq \Q(E[p])$, then either $\Q(\zeta_{q^n})=\Q$, $\Q(i)$, or $\Q(\zeta_3)$, or $E/\Q$ has a rational $p$-isogeny, $p=2,3,5,7,11,13,17,19,37,43,67,$ or $163$, and $\varphi(q^n)$ divides $p-1$. 
		\end{enumerate}
\end{theorem}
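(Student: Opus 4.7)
The plan is to establish part (2) first, derive (3) as a near-formal consequence, and then bootstrap (1) from the two.

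For part (2), set $G = \rho_{E,p}(G_\Q) \subseteq \GL(2, \FF_p)$, so that $\Gal(K_p(E)/\Q) = G^{\mathrm{ab}}$, and note that the Weil pairing makes $\det : G \twoheadrightarrow (\Z/p\Z)^\times$, giving $\Q(\zeta_p) \subseteq K_p(E)$ as a distinguished quotient of $G^{\mathrm{ab}}$. I would split on Dickson's classification of subgroups of $\GL(2, \FF_p)$: the image $G$ lies (i) in a Borel subgroup $B$ (equivalently, $E$ admits a rational $p$-isogeny), (ii) in the normalizer of a split or non-split Cartan but not in a Borel, (iii) in the preimage of an exceptional subgroup $A_4$, $S_4$, or $A_5 \subseteq \PGL(2, \FF_p)$, or (iv) $G \supseteq \SL(2, \FF_p)$. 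The central computation is in case (i): because the determinant is surjective on $G$, some $g \in G$ has distinct diagonal entries $a \neq d$, and the commutator of any such $g$ with any transvection $u = I + c E_{12} \in G \cap U$ (where $U$ is the unipotent radical of $B$) equals $[g, u] = I + c(a/d - 1) E_{12}$; letting $c$ and $g$ vary shows $[G, G] = G \cap U$. Hence $G^{\mathrm{ab}}$ injects into $T = B/U \cong (\FF_p^\times)^2$, and since every subgroup of $(\FF_p^\times)^2$ is abstractly a product of at most two cyclic groups of orders dividing $p-1$ while the product map $T \to \FF_p^\times$ remains surjective on $G^{\mathrm{ab}}$ by the Weil pairing, the decomposition $G^{\mathrm{ab}} \cong (\Z/p\Z)^\times \times C$ with $C$ cyclic of order dividing $p-1$ follows. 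In cases (ii)--(iv), one verifies $|C| \leq 2$: in (iv), $\SL(2, \FF_p)$ is perfect for $p \geq 5$ so $G^{\mathrm{ab}} = \det(G)$, with the small-prime cases handled directly; in (ii), modding out the Cartan contributes at most the Weyl-coset $C_2$ beyond the determinant; and in (iii), one invokes the classification of exceptional mod-$p$ images for elliptic curves over $\Q$ (Zywina, Rouse--Zureick-Brown) to exclude the $C_3$-component potentially arising from $A_4^{\mathrm{ab}}$.

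For part (3), since $p < q$ the fields $\Q(\zeta_{q^n})$ and $\Q(\zeta_p)$ are linearly disjoint over $\Q$. If $\Q(\zeta_{q^n}) \subseteq \Q(E[p])$, then $\Q(\zeta_{q^n}) \subseteq K_p(E)$ and the quotient of $G^{\mathrm{ab}} \cong (\Z/p\Z)^\times \times C$ onto $\Gal(\Q(\zeta_{q^n})/\Q)$ factors through $C$, since the $(\Z/p\Z)^\times$-factor already corresponds to $\Q(\zeta_p)$. Therefore $\varphi(q^n) \mid |C| \mid p - 1$. If $E$ has no rational $p$-isogeny, part (2) gives $|C| \leq 2$, so $\varphi(q^n) \leq 2$, forcing $\Q(\zeta_{q^n}) \in \{\Q, \Q(i), \Q(\zeta_3)\}$; otherwise Mazur's theorem on rational isogenies over $\Q$ restricts $p$ to the listed set and the bound $\varphi(q^n) \mid p-1$ holds.

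For part (1), suppose $\Q(E[p^n]) = \Q(E[q^m])$ with $p < q$. The common field contains both $\Q(\zeta_{p^n})$ and $\Q(\zeta_{q^m})$; by N\'eron--Ogg--Shafarevich this forces both $p$ and $q$ to be primes of bad reduction for $E$, since a totally ramified cyclotomic extension cannot be contained in a division field that is unramified at the corresponding prime. Iteratively applying part (3) up the division tower---using that each cyclotomic subfield of $\Q(E[p^n])$ descends, via an appropriate quotient of $\GL(2, \Z/p^n\Z)$, to a cyclotomic subfield controlled by $K_p(E)$---together with Mazur's isogeny list, narrows the pair $(p^n, q^m)$ to the single case $(2, 3)$. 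To finish, the condition $\Q(E[2]) = \Q(E[3])$ translates into a rational point on a genus-zero moduli-type quotient parametrizing elliptic curves with a rational $3$-isogeny whose kernel field coincides with the $2$-division field; computing an explicit rational parametrization produces the stated Weierstrass family, with the $-3$ quadratic twist giving the complementary component. The main obstacle is this final step---identifying the correct moduli problem, verifying that it has genus zero, and computing the explicit parametrization---since the preceding arguments reduce to group-theoretic bookkeeping in $\GL(2, \FF_p)$ combined with standard ramification arguments.
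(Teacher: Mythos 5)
Your treatment of parts (2) and (3) follows essentially the same route as the paper: split on Dickson's classification of maximal subgroups of $\GL(2,\FF_p)$, identify the commutator $[G,G]$ with the unipotent radical in the Borel case (your identity $[g,u] = I + c(a/d-1)E_{12}$ is the same computation that underlies the paper's Lemma~\ref{lem-borel}), observe $\GL_2$ and the exceptional images directly, and bound the Cartan cases by two. The one genuine omission is in the Cartan case: the conclusion that $G^{\mathrm ab}\cong (\Z/p\Z)^\times$ or $(\Z/p\Z)^\times\times \Z/2\Z$, rather than the cyclic group $\Z/2(p-1)\Z$, is \emph{not} a formal fact about subgroups of the normalizer of a Cartan --- the paper (Theorem~\ref{prop-main_split}) must invoke that $G$ contains an element of trace $0$ and determinant $-1$ (complex conjugation) to rule out the $\Z/2(p-1)\Z$ possibility, and your one-line ``Weyl-coset $C_2$'' dismissal does not address this.

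Part (1) is where you have a real gap. ``Iteratively applying part (3) up the division tower, together with Mazur's isogeny list, narrows to $(2,3)$'' is not an argument; at least two essential inputs are missing. First, the descent from $\Q(E[p^n])$ to the prime level is not automatic from part (3): the field $\Q(\zeta_{q^m})\cap\Q(E[p])$ need not be cyclotomic, and what actually controls $m$ is that $\varphi(q^m)$ divides $\#\GL(2,\Z/p^n\Z)=p^{4n-3}(p-1)^2(p+1)$, which combined with $q>p$ forces $m=1$ unless $(p,q,m)=(2,3,2)$ (the paper's Proposition~\ref{prop:roots_in_smaller_prime_div_field}); this is a degree-divisibility argument your sketch never performs. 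Second, even after applying (3) in both directions several surviving pairs are not excluded by any $\varphi$-divisibility bookkeeping: $\Q(E[4])=\Q(E[3])$ and $\Q(E[2^n])=\Q(E[9])$ both pass the formal tests. The paper kills the first by a dedicated modular-curve argument that reduces, via a genus-$9$ curve, to the rational points of the genus-one curve $1+y^2=x^3$, and the second by an explicit $\GL(2,\Z/9\Z)$ subgroup search; neither step appears in your plan and neither is a consequence of ``part (3) plus Mazur.'' Finally, your description of the moduli problem for the explicit family --- ``a rational $3$-isogeny whose kernel field coincides with the $2$-division field'' --- does not describe the condition $\Q(E[2])=\Q(E[3])$; the correct condition is that $\rho_{E,6}$ lands in one of two specific order-$6$ subgroups of $\GL(2,\Z/6\Z)$ projecting isomorphically to $S_3$ on each factor.

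In short: (2) and (3) are the paper's argument with the complex-conjugation hypothesis suppressed in the Cartan case; (1) as written is not a proof and would require you to discover, essentially from scratch, the degree-divisibility reduction and the two explicit eliminations $(4,3)$ and $(2^n,9)$.
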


For example, the curve $E/\Q: y^2 + xy + y = x^3 - x^2 - 2x - 26$, with Cremona label \href{https://www.lmfdb.org/EllipticCurve/Q/405d1/}{\texttt{405d1}}, satisfies $\Q(\zeta_9)\subseteq \Q(E[7])$.  Finally, our third theorem deals with the particular case of abelian division fields. 

\begin{theorem}\label{thm:abelian}
	Let $E/\QQ$ be an elliptic curve and let $n>m\geq 2$ be integers, such that $\Q(E[n])/\Q$ is an abelian extension. 
	\begin{enumerate}
		\item If $\QQ(E[n]) = \QQ(E[m])$, then there are two possibilities.
		\begin{enumerate}
			\item Either 
		$m=2,n=4$, and for some $t\in\QQ$, $E/\QQ$ is $\QQ$-isomorphic to $$y^2 = x^3 + (-432t^8 + 1512t^4 - 27)x + (3456t^{12} + 28512t^8 - 7128t^4 - 54).$$
	In this case, $\QQ(E[2]) = \QQ(E[4]) = \QQ(i)$.
	\item Or $m=3,n=6$, with $\Q(E[2])\subsetneq \Q(E[3])=\Q(E[6])$, and there is some $t\in\QQ$, such that $j(E)=j(t)$ where
$$j(t)=-\left(\frac{(t^3 - 3t^2 - 9t - 9)(t^3 + 3t^2 + 3t - 3)(t^6 + 12t^5 + 81t^4 + 216t^3 + 243t^2 + 108t + 27)}{t(t+1)^2(t+3)^2(t^2+3)^2(t^2+3t+3)}\right)^3,$$
Conversely, if $E'/\Q$ is an elliptic curve such that $j(E')=j(t)$ for some $t\in\Q$, then there is a quadratic twist $E''/\Q$ of $E'$ such that $\Q(E''[2])\subsetneq \Q(E''[3])=\Q(E''[6])$.
\end{enumerate}
	\item Let $p$ be prime, such that $\Q(E[p])/\Q$ is abelian, and let $q\neq p$ be another prime. Then, $\Q(E[p])\cap \Q(\zeta_{q^k})$ can be trivial, quadratic, cyclic cubic (for $p=2$), or cyclic quartic (for $p=5$). 
\end{enumerate}
\end{theorem}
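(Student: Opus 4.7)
The plan is to combine the two preceding theorems of this paper with the classification of elliptic curves $E/\Q$ having an abelian $n$-division field from \cite{LR+GJ}. That work restricts the integers $n$ for which $\Q(E[n])/\Q$ can be abelian to an explicit short list, and for each such $n$ describes the mod-$n$ image of Galois (and hence the corresponding $j$-invariants) up to finitely many families.

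For Part (1), any coincidence $\Q(E[n])=\Q(E[m])$ with $\Q(E[n])/\Q$ abelian forces $\Q(E[p])/\Q$ to be abelian for every prime $p\mid n$, which already sharply restricts the possible pairs $(m,n)$. I would split into the vertical case (where $m$ and $n$ share a prime) and the horizontal case. In the vertical regime, Theorem \ref{thm:main_vertical}(1) immediately forces $p=2$, $m=2$, $n=4$, together with the explicit parameterization $E_t$; one verifies that $\Q(E_t[2])=\Q(E_t[4])=\Q(i)$ is abelian, and a rational reparameterization of $t$ transports the family of Theorem \ref{thm:main_vertical} to the one displayed in (1)(a). In the horizontal regime, by Theorem \ref{thm:main_horizontal} the only prime-level pair with $\Q(E[p])=\Q(E[q])$ is $(p,q)=(2,3)$, and the corresponding curve $E'$ has non-abelian $\Q(E'[3])/\Q$ by a direct computation of the mod-$3$ image, ruling that sub-case out. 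The remaining possibility is $(m,n)=(3,6)$, i.e.\ $\Q(E[2])\subseteq\Q(E[3])$, which is exactly the setting of Brau--Jones \cite{brau} and Morrow \cite[Theorem 8.7]{morrow}. Intersecting their classification with the abelian constraint for $\Q(E[3])/\Q$ from \cite{LR+GJ} leaves a single one-parameter family, whose $j$-invariant is obtained as the rational function $j(t)$ in the statement. The converse in (b) follows from a twisting argument: the $j$-invariant determines $E$ only up to quadratic twist, and the unique twist compatible with the parameterized mod-$6$ image is defined over $\Q$.

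For Part (2), the hypothesis that $\Q(E[p])/\Q$ is abelian restricts $p$ to the explicit list provided by \cite{LR+GJ} and, for each such $p$, enumerates finitely many possibilities for $\Gal(\Q(E[p])/\Q)$. By Kronecker--Weber, $\Q(E[p])\subseteq \Q(\zeta_N)$ for a computable $N$, which reduces the computation of $\Q(E[p])\cap \Q(\zeta_{q^k})$ to a group-theoretic intersection inside $(\Z/N\Z)^\times$. Running through the list, the trivial and quadratic intersections occur in many sub-cases; the cyclic cubic intersection arises only from $p=2$, via a $\Z/3\Z$ quotient of $\Gal(\Q(E[2])/\Q)$ identified with the cubic subfield of $\Q(\zeta_q)$ for a prime $q\equiv 1\pmod 3$ (or the cubic subfield of $\Q(\zeta_9)$); and the cyclic quartic intersection arises only for $p=5$, where the abelian image of $\rho_{E,5}$ contains a cyclic quartic character ramified only at a prime $q\neq 5$, giving a cyclic quartic subfield inside $\Q(\zeta_{q^k})\cap\Q(E[5])$. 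In each case, explicit witnessing curves drawn from the classification in \cite{LR+GJ} (or constructed as CM twists) confirm that the listed intersection degrees are realized, and an exhaustive check across the finite list rules out all other shapes.

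The main obstacle lies in case (b) of Part (1): pinning down the $j$-parameterization requires identifying the correct $\Q$-rational component of the modular curve classifying pairs $(E,\phi)$ with $\Q(E[2])\subseteq \Q(E[3])$ and $\Q(E[3])/\Q$ abelian, and then exhibiting an explicit rational quadratic twist that realizes the abelian containment for every $j$-value in the image of $j(t)$. Carrying this out cleanly requires combining the Brau--Jones/Morrow moduli description with a character-theoretic analysis of the abelian mod-$3$ image provided by \cite{LR+GJ}, and the quartic case of Part (2) is similarly delicate, as it depends on locating the correct abelian $5$-adic image with a quartic character of conductor prime to $5$.
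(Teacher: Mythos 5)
Your high-level strategy—reduce to a finite list of candidate pairs $(m,n)$ via \cite{LR+GJ} and then eliminate each—matches the paper in spirit, but there are two organizational problems that need fixing before the argument is sound.

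First, there is a circularity. You invoke Theorem \ref{thm:main_horizontal} to conclude that the only prime-level coincidence $\Q(E[p])=\Q(E[q])$ is $(p,q)=(2,3)$ and then rule this pair out. But in the paper, the proof of Theorem \ref{thm:main_horizontal} (Section 6, and also Lemma \ref{lem:p^n=9}) explicitly begins with ``By Theorem \ref{thm:abelian}, we may assume $\Q(E[q])/\Q$ is non-abelian.'' That is, Theorem \ref{thm:abelian} is an input to Theorem \ref{thm:main_horizontal}, not the other way around. In the abelian setting you do not need the heavier horizontal theorem at all: Theorem \ref{thm:LR+GJ} gives the full list of attainable Galois groups $\Gal(\Q(E[n])/\Q)$ for $n\in\{2,3,4,5,6,8\}$, and a simple comparison of those groups already eliminates $(2,3)$—if $\Q(E[2])=\Q(E[3])$ then also $\Q(E[2])=\Q(E[6])$, and the table shows $\Gal(\Q(E[6])/\Q)$ has order $\geq 4$ while $\Gal(\Q(E[2])/\Q)$ has order $\leq 3$.

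Second, your vertical/horizontal dichotomy, as stated, is not exhaustive. With $n\in\{2,3,4,5,6,8\}$ from \cite{LR+GJ}, the candidate coincidences include pairs such as $(3,4)$ and $(4,6)$, for which neither Theorem \ref{thm:main_vertical} (which handles only $p^k,p^{k+1}$) nor the prime-power statement of Theorem \ref{thm:main_horizontal} applies cleanly. The paper disposes of $(3,4)$ and $(4,6)$ by observing that either coincidence would force $\Q(E[12])=\Q(E[3])\cdot\Q(E[4])$ (or $\Q(E[6])\cdot\Q(E[4])$) to be abelian, contradicting the fact that $12\notin\{2,3,4,5,6,8\}$; your sketch needs an analogous step.

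For the $(3,6)$ case your route through Brau--Jones/Morrow is a genuine alternative to the paper's argument, which instead uses the split-Cartan characterization from Section 6.2 of \cite{LR+GJ} together with Kenku's bound on $\Q$-isogeny classes to rule out $\Q(E[2])=\Q$ and pin down $\Q(E[2])=\Q(\sqrt{\Delta_E})$, $\Q(E[3])=\Q(\sqrt{-3},\sqrt{\Delta_E})$, followed by an explicit two-step twisting construction. Intersecting the Brau--Jones/Morrow moduli picture with the abelian constraint could be made to work, but the converse in (1)(b) is not ``the unique compatible twist'': the paper constructs the witnessing twist $E''$ in two stages (first adjust to make a $3$-isogeny $\Q$-rational, then twist by the discriminant), and you should be explicit about that. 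For Part (2) your Kronecker--Weber framing is fine and essentially equivalent to the paper's direct analysis of the Weil pairing plus the table in Theorem \ref{thm:LR+GJ}.
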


For example, let $E/\Q: y^2 = x^3 - x^2 - 4319x + 100435$, with Cremona label \href{https://www.lmfdb.org/EllipticCurve/Q/18176r2/}{\texttt{18176r2}}. Then, $\Gal(\Q(E[5])/\Q)\simeq (\Z/4\Z)^2$, and $\Q(E[5])=F(\zeta_5)$ is the compositum of $\Q(\zeta_5)$ and a cyclic quartic field $F\subseteq \Q(\zeta_{16})$.

It is worth pointing out that the family of elliptic curves that appears in part (1) of Theorem \ref{thm:main_vertical} is a parametrization of the modular curve $\tt X_{20b}$ from \cite{RZB}. Similarly, the family that appears in part (1) of Theorem \ref{thm:abelian} is $\tt X_{60d}$. Interestingly, $\tt X_{60d}$ parametrizes a subfamily of $\tt X_{20b}$ (see Remark \ref{rem-groupG} for more on this).  The family that appears in Theorem \ref{thm:main_horizontal} will be constructed in the proof of the theorem at the end of the paper. 

	Any computations done in this paper have been done using Magma \cite{Magma} and some code used in this paper was adapted from code written for \cite{Daniels, DLRNS, DDH,lozano1, SZ16}. For the ease of the reader, anytime a specific elliptic curve is mentioned, we refer to the curve by Cremona reference and include a link to the corresponding LMFDB \cite{lmfdb} page.

The structure of the paper is as follows. In Section \ref{sec:gal_reps} we introduce the necessary background about Galois representations.  Section \ref{sec-towers} contains the proof of Theorem \ref{thm:main_vertical}. We first prove the theorem for odd primes in Section \ref{sec:odd_vert}, in Section \ref{sec:even_vert} we use \cite{RZB} to settle the case of $p=2$ for non-CM curves, and in Section \ref{sec:even_vert_CM} we deal with the case of $p=2$ in the CM case using results from \cite{lozano1}. In Section \ref{sec-abelian} we show Theorem \ref{thm:abelian}, relying on results from \cite{LR+GJ}. In Section \ref{sec:abelian_exts} we examine the fields $\QQ(E[p])\cap\QQ^{ab}$ so that finally, in Section \ref{sec:horizontal} we can prove Theorem \ref{thm:main_horizontal}. 

\begin{remark}
	Unfortunately, our methods are not sufficient to classify all the instances when $\Q(E[m])=\Q(E[n])$ for any natural numbers $n>m\geq 2$. We suspect that the only possibilities are $(n,m)\in \{ (2,3),(2,4),(2,6),(3,6)\}$ but the current knowledge on the possible adelic images of $\rho_E\colon \Gal(\overline{\Q}/\Q)\to \GL(2,\widehat{\Z})$ is not sufficient to settle all the possible coincidences. For instance, to conclude a complete list, we would need to know the full list of possible mod-$p^2$ images, but at the moment this is not known (not even for $p=3$). More concretely, a more detailed understanding of the classification of mod-$9$ images would be necessary to rule out, for example, the pair $(6,9)$, that would be a coincidence between the $6$-th and the $9$-th division field. With a bit of computational help, we can prove the following result for coincidences in the range $2\leq m <n\leq 10$.
\end{remark}

\begin{thm}\label{thm-whatpairs}
	Let $2\leq m < n \leq 10$ be natural numbers, let $E/\Q$ be an elliptic curve, and suppose that $\Q(E[m])=\Q(E[n])$. Then,
	$$(m,n)\in \{(2,3),(2,4),(2,6),(3,6),(4,6),(6,8),(6,9),(5,10)\}.$$
\end{thm}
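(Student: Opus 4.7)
The plan is to systematically rule out each pair $(m,n)$ with $2\leq m < n \leq 10$ not in the allowed list, by combining Theorems \ref{thm:main_vertical} and \ref{thm:main_horizontal} with direct degree and Galois-group arguments, and finishing the remaining cases by an explicit enumeration of mod-$L$ Galois images (where $L=\lcm(m,n)$) in Magma. The key reformulation is that $\Q(E[m])=\Q(E[n])$ is equivalent to $\Q(E[L])=\Q(E[m])=\Q(E[n])$, so every divisor $d$ of $L$ satisfies $\Q(E[d])\subseteq\Q(E[m])$; this turns each candidate coincidence into a list of forced division-field containments that one can try to obstruct.

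The first reduction uses Theorem \ref{thm:main_horizontal}(1) to eliminate every coprime pair of prime powers other than $(2,3)$, disposing of the sixteen pairs $(2,5),(2,7),(2,9),(3,4),(3,5),(3,7),(3,8),(4,5),(4,7),(4,9),(5,7),(5,8),(5,9),(7,8),(7,9),(8,9)$. The second reduction uses Theorem \ref{thm:main_vertical}(1) to eliminate the same-tower pairs $(3,9)$ and $(4,8)$; the pair $(2,8)$ falls in the same step because $\Q(E[2])=\Q(E[8])$ would force the forbidden vertical coincidence $\Q(E[4])=\Q(E[8])$.

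Among the nine remaining pairs, two more succumb to elementary obstructions. A coincidence at $(2,10)$ would force $\Q(\zeta_5)\subseteq\Q(E[2])$, but $[\Q(\zeta_5):\Q]=4$ does not divide $[\Q(E[2]):\Q]\in\{1,2,3,6\}$; and a coincidence at $(3,10)$ would force $\Q(\zeta_5)\subseteq\Q(E[3])$, which is ruled out by Theorem \ref{thm:main_horizontal}(3), since the second alternative there requires $\varphi(5)=4$ to divide $p-1=2$, contradiction.

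The main obstacle, and the source of the computational qualifier in the statement, is to certify that the last seven pairs $(4,10),(5,6),(6,7),(6,10),(7,10),(8,10),(9,10)$ cannot occur. For each, I would enumerate in Magma all subgroups of $\GL(2,\Z/L\Z)$ compatible with the known classifications of mod-$p$ and mod-$p^k$ images in \cite{zywina1, RZB, Sutherland2} and the corresponding LMFDB tables, and check directly whether the resulting division-field containment can be realized over $\Q$. The trickiest subcase is any pair with $9\mid L$, since no complete classification of mod-$9$ images of elliptic curves over $\Q$ is currently available; here one has to work subgroup-by-subgroup inside $\GL(2,\Z/9\Z)$ lying above each known mod-$3$ image, which accounts for the bulk of the \emph{bit of computational help} referenced in the statement.
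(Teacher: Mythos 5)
Your decomposition matches the paper's at the top level: both apply Theorem \ref{thm:main_horizontal}(1) to the sixteen coprime prime--power pairs, Theorem \ref{thm:main_vertical} to the tower pairs $(3,9),(4,8)$ together with the implication $(2,8)\Rightarrow(4,8)$, and finish with a Magma enumeration. You depart from the paper in two harmless and arguably cleaner ways. First, you knock out $(2,10)$ by the purely numerical obstruction that $\Q(\zeta_5)\subseteq\Q(E[2])$ would force $4\mid[\Q(E[2]):\Q]$, which is impossible since that degree divides $6$, and you handle $(3,10)$ via Theorem \ref{thm:main_horizontal}(3) since $\varphi(5)=4\nmid 2$; the paper instead adds both of these to its brute-force list $L$. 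Second, your computational phase works directly with subgroups of $\GL(2,\Z/L\Z)$, $L=\lcm(m,n)$, exploiting that $\Q(E[m])=\Q(E[n])$ forces the reduction maps $G_L\to G_m$ and $G_L\to G_n$ to be isomorphisms; the paper instead enumerates admissible $G_m$ and $G_n$ separately (full determinant, an order-two element of trace $0$ and determinant $-1$) and checks for an abstract isomorphism $G_m\simeq G_n$, then peels off $(8,10)$ (where such isomorphisms occur only for abelian groups, excluded by \cite{LR+GJ}) and $(6,7)$ (where the surviving $G_7$ must additionally be filtered against the Sutherland--Zywina mod-$7$ classification). Your mod-$L$ formulation packages those two supplementary arguments automatically, at the cost of searching a larger matrix group; both routes correctly rely only on necessary conditions on images rather than on a nonexistent complete mod-$9$ classification, which, as you and the paper both note, is exactly why the theorem is stated with the four pairs $(4,6),(6,8),(6,9),(5,10)$ left unresolved.
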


Our results can also show some other special cases, as the following corollary exemplifies (Corollary of Theorem \ref{thm:main_horizontal}).

\begin{cor}\label{thm-whatabout3}
	Let $p$ be a prime, and let $m\geq 2$ be an integer divisible by $q^n$ for some odd prime $q$ and $n\geq 1$, such that $\varphi(q^n)$ does not divide $p-1$. Then, $\Q(E[p])=\Q(E[m])$ is impossible. 
\end{cor}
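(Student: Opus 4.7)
The plan is to deduce the corollary as a short application of Theorem~\ref{thm:main_horizontal}(3), with the Weil pairing supplying the key cyclotomic containment. Suppose, for contradiction, that $\QQ(E[p])=\QQ(E[m])$. Since $q^n \mid m$, we have $\QQ(E[q^n])\subseteq\QQ(E[m])=\QQ(E[p])$, and the Galois-equivariance and surjectivity of the Weil pairing $e_{q^n}\colon E[q^n]\times E[q^n]\to\mu_{q^n}$ place $\zeta_{q^n}$ inside $\QQ(E[q^n])$. Therefore
\[
\QQ(\zeta_{q^n})\;\subseteq\;\QQ(E[p]).
\]

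Next I would apply Theorem~\ref{thm:main_horizontal}(3) to this inclusion. It forces either $\QQ(\zeta_{q^n})\in\{\QQ,\QQ(i),\QQ(\zeta_3)\}$, or $E/\QQ$ admits a rational $p$-isogeny (with $p$ in the explicit finite list) and $\varphi(q^n)\mid p-1$. The hypothesis $\varphi(q^n)\nmid p-1$ immediately eliminates the second alternative. For the first, the assumption that $q$ is odd and $n\geq 1$ makes $\QQ(\zeta_{q^n})$ a nontrivial cyclotomic field ramified only at the odd prime $q$: its degree $\varphi(q^n)\geq 2$ rules out $\QQ$, and the mismatch of ramified primes rules out $\QQ(i)$. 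The only remaining possibility is $\QQ(\zeta_{q^n})=\QQ(\zeta_3)$, which would force $q^n=3$ and (via the hypothesis) $p=2$.

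The main obstacle is disposing of this final boundary case $(p,q^n)=(2,3)$, since $\QQ(\zeta_3)\subseteq\QQ(E[2])$ can genuinely occur (cf.\ Example~1.3). In this regime I would invoke the explicit classification of Theorem~\ref{thm:main_horizontal}(1), which pins down the one-parameter family of curves with $\QQ(E[2])=\QQ(E[3])$, and then combine it with a degree and $2$-adic image comparison to rule out $\QQ(E[2])=\QQ(E[m])$ for any strictly larger multiple $m$ of $3$ that is not already accounted for by the classification in Theorem~\ref{thm-whatpairs}. For any $q^n\geq 5$, no such boundary arises and the corollary is a one-line consequence of the Weil-pairing reduction plus Theorem~\ref{thm:main_horizontal}(3).
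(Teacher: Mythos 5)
Your approach is essentially the paper's. You reduce to $\QQ(\zeta_{q^n})\subseteq\QQ(E[p])$ via the Weil pairing and then constrain $K_E(p)=\QQ(E[p])\cap\QQ^{\mathrm{ab}}$; the paper cites Proposition~\ref{prop:summary} for this constraint directly, whereas you take a detour through Theorem~\ref{thm:main_horizontal}(3), which is itself a consequence of that proposition. Your case analysis is right: for odd $q$, the only surviving cyclotomic exception in Theorem~\ref{thm:main_horizontal}(3) is $\QQ(\zeta_{q^n})=\QQ(\zeta_3)$, so $q^n=3$, and then $\varphi(3)=2\nmid p-1$ forces $p=2$.

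The boundary case you correctly flag, however, is not something you can dispose of by further analysis: it reflects a genuine restriction that must be imposed on the statement. With $(p,q^n,m)=(2,3,3)$, the hypothesis $\varphi(3)=2\nmid 1=p-1$ is met, and yet $\QQ(E[2])=\QQ(E[3])$ does occur---Theorem~\ref{thm:main_horizontal}(1) gives a one-parameter family, and the curve \texttt{486d2} in the introduction is an explicit instance. So the corollary as written is correct only for $p>2$; this is precisely the hypothesis of Proposition~\ref{prop:summary}, which is why the paper's more direct argument never meets the case. Your proposed repair, combining Theorem~\ref{thm:main_horizontal}(1) with a degree or $2$-adic comparison and then Theorem~\ref{thm-whatpairs}, cannot succeed, because the problematic value $m=3$ is itself a genuine coincidence and there is nothing left to rule out. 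The fix is simply to read $p>2$ into the corollary (as the paper tacitly does); with that in place your argument closes and agrees with the paper's, up to the extra layer of indirection through Theorem~\ref{thm:main_horizontal}(3).
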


\begin{example}
	As a consequence of Corollary \ref{thm-whatabout3}, the division fields $\Q(E[3])$ and $\Q(E[m])$ cannot coincide for any integer $m$ divisible by a prime $p\geq 5$.
\end{example}

The proofs of Theorem \ref{thm-whatpairs} and Corollary \ref{thm-whatabout3} can be found in Section \ref{sec-whatpairs}.

\begin{ack}
The authors would like to thank Enrique Gonz\'alez-Jim\'enez, Jackson Morrow, and Filip Najman for helpful comments on an earlier draft of this paper. We would also like to thank the referees for their many helpful comments and suggestions.
\end{ack}

\section{Galois Representations associated to Elliptic Curves}\label{sec:gal_reps}

 In this section we cite a number of key results that we will use in the following sections. Let $E/\Q$ be an elliptic curve. For a prime number $p$, we define the $p$-adic Tate module of $E/\QQ$ by  $T_p(E) = \varprojlim E[p^n]$, where the inverse limit is taken with respect to  the multiplication-by-$p$ maps $[p]\colon E[p^{n+1}]\to E[p^n]$. The absolute Galois group of $\QQ$ acts on $T_p(E)$, and induces a Galois representation
$$\rho_{E,p^\infty}\colon \GQ \to \Aut(T_p(E)).$$
If we choose a $\ZZ_p$-basis of $T_p(E)$, then we may consider $\rho_{E,p^\infty}\colon \GQ\to \Aut(T_p(E))\simeq \GL(2,\ZZ_p)$, and we are interested in describing the image of $\rho_{E,p^\infty}$ in $\GL(2,\ZZ_p)$. Much is known about the image of $\rho_{E,p}$, most notably Serre's so-called open image theorem.

\begin{thm}[Serre, \cite{serre1}]\label{thm-serre2}
Let $E/\QQ$ be an elliptic curve without complex multiplication and, for each prime $p$, let $G_p\subseteq \GL(2,\ZZ_p)$ be the image of $\rho_{E,p^\infty}$. Then, $G_p$ is an open subgroup of $\GL(2,\ZZ_p)$ for every $p$ (in particular, the index is finite), and $G_p=\GL(2,\ZZ_p)$ for all but finitely many primes.
\end{thm}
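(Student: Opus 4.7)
The plan is to follow Serre's original two-pronged approach from \cite{serre1}. The first assertion --- openness of $G_p$ for every prime $p$ --- is proved by a Lie-theoretic argument at each fixed $p$. The second assertion --- equality $G_p = \GL(2,\ZZ_p)$ for almost all $p$ --- is proved by a case analysis on the maximal subgroups of $\GL(2,\FF_p)$ combined with a lifting lemma. The assumption that $E$ has no complex multiplication enters in both halves in a decisive way.

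For openness at a fixed prime $p$, I would attach to the $p$-adic Lie group $G_p \subseteq \GL(2,\ZZ_p)$ its Lie algebra $\mathfrak{g}_p \subseteq \mathfrak{gl}_2(\QQ_p)$. Openness of $G_p$ is equivalent to $\mathfrak{g}_p = \mathfrak{gl}_2(\QQ_p)$, and since $\det \rho_{E,p^\infty}$ is the $p$-adic cyclotomic character (hence has open image in $\ZZ_p^\times$), it suffices to show that the projection of $\mathfrak{g}_p$ to $\mathfrak{sl}_2(\QQ_p)$ is surjective. The subalgebra $\mathfrak{g}_p$ is stable under the adjoint action of $G_p$, and the Galois-stable Lie subalgebras of $\mathfrak{sl}_2(\QQ_p)$ form a very short list: zero, a Cartan, the normalizer of a Cartan, or a Borel. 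Each proper case would force $E$ to acquire extra endomorphisms over a quadratic extension (contradicting the no-CM hypothesis) or to admit an extraordinary compatible family of isogenies. The essential input is that Frobenius eigenvalues at primes of good reduction are algebraic integers whose distribution distinguishes the non-CM case from the CM one.

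For surjectivity at almost all primes, I would invoke the classification of maximal subgroups of $\GL(2,\FF_p)$: a proper maximal subgroup is either (i) a Borel, (ii) the normalizer of a split Cartan, (iii) the normalizer of a nonsplit Cartan, or (iv) a group whose projective image is $A_4$, $S_4$, or $A_5$. Case (i) yields a rational cyclic $p$-isogeny on $E$, which by Mazur's isogeny theorem forces $p\in\{2,3,5,7,11,13,17,19,37,43,67,163\}$. Cases (ii) and (iii) produce rational points on the modular curves parametrizing such Cartan images, and Serre shows these can only occur for finitely many $p$ via arguments combining Frobenius trace data and the theory of CM points. Case (iv) is ruled out for large $p$ using congruence conditions on $a_\ell \bmod p$ against the Hasse bound. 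Once $\rho_{E,p}$ is surjective for all $p \geq p_0$, a standard lifting lemma upgrades this to surjectivity of $\rho_{E,p^\infty}$: the group $\SL(2,\ZZ_p)$ has no proper closed subgroup surjecting onto $\SL(2,\FF_p)$ for $p\geq 5$, because $\SL(2,\FF_p)$ is perfect in that range, so one may bootstrap surjectivity up the tower one level at a time.

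The main obstacle is the Cartan-normalizer analysis in cases (ii) and (iii), which historically required Serre's subtlest arguments (and where the split case was only fully resolved over $\QQ$ much later, by Bilu--Parent--Rebolledo). For the purposes of the present paper, however, Theorem \ref{thm-serre2} is used only as a black box, and the role of this sketch is to explain why the exceptional list $\{2,3,5,7,11,13,17,19,37,43,67,163\}$ reappears in Theorem \ref{thm:main_horizontal} (via Mazur's isogeny theorem applied to case (i)) and why the non-CM hypothesis is essential for the horizontal coincidences of later sections to be suitably restricted.
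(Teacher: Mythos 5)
The paper does not prove this theorem at all: it is cited directly to Serre's Inventiones 1972 paper (\cite{serre1}) and used throughout as a black box, exactly as you acknowledge in your final paragraph. Your sketch of Serre's argument captures the overall shape — Lie algebra argument for openness at each $p$, classification of maximal subgroups of $\GL(2,\FF_p)$ plus a lifting lemma for surjectivity at almost all $p$ — and correctly locates the role of the no-CM hypothesis and the lifting lemma (which is indeed \cite[IV-23, Lemma 3]{serre}, cited elsewhere in the paper).

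Two points deserve correction, since they conflate Theorem \ref{thm-serre2} with a much harder question. The second assertion is a \emph{per-curve} finiteness statement, and Serre's 1972 proof of the Borel case rests on Shafarevich's finiteness theorem (isogenous curves with bounded bad reduction fall into finitely many isomorphism classes), not on Mazur's isogeny theorem, which came six years later and gives a \emph{uniform} bound. Likewise, Serre handles the Cartan-normalizer cases directly for a fixed $E$; the work of Bilu--Parent--Rebolledo you mention addresses Serre's \emph{uniformity} question, which is strictly stronger than and logically independent of the theorem being cited here. Invoking those later results here inverts the history and imports a gap (uniform split Cartan) that the theorem as stated does not have. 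A smaller quibble: the normalizer of a Cartan is a subgroup, not a Lie subalgebra — its Lie algebra is the Cartan itself — so the Galois-stable subalgebras of $\mathfrak{sl}_2(\QQ_p)$ are of dimension $0$, $1$ (a torus or a nilpotent line), $2$ (a Borel), or all of $\mathfrak{sl}_2$.
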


In a recent article \cite{zywina2}, Zywina has determined (up to a finite number of $j$-invariants) a finite list of all possible indices that may occur for the image of the representation $\rho_{E}\colon \GQ \to \GL(2,\widehat{\ZZ})$ that results as inverse image of $\rho_{E,n}\colon \GQ \to \Aut(E[n])\simeq \GL(2,\ZZ/n\ZZ)$.

Rouse and Zureick-Brown have classified all the possible $2$-adic images of $\rho_{E,2^\infty}\colon \GQ\to \GL(2,\ZZ_2)$, and Sutherland and Zywina have conjectured the possibilities for the mod $p$ image for all primes $p$.

\begin{thm}[Rouse, Zureick-Brown, \cite{RZB}]\label{thm-rzb} Let $E$ be an elliptic curve over $\QQ$ without complex multiplication. Then, there are exactly $1208$ possibilities for the $2$-adic image $\rho_{E,2^\infty}(\GQ)$, up to conjugacy in $\GL(2,\ZZ_2)$. Moreover, the index of $\rho_{E,2^\infty}(\Gal(\overline{\QQ}/\QQ))$ in $\GL(2,\ZZ_2)$ divides $64$ or $96$, and every image is defined at most modulo $32$.
\end{thm}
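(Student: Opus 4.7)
The plan is to reduce the classification to a finite computation by bounding the level at which every possible $2$-adic image is determined, enumerating candidate subgroups at that level, and then deciding which are realized over $\QQ$. By Serre's open image theorem applied at $p=2$ (Theorem~\ref{thm-serre2}), for every non-CM $E/\QQ$ the image $G_2=\rho_{E,2^\infty}(G_\QQ)$ is open in $\GL(2,\ZZ_2)$, so $G_2$ equals the full preimage of its reduction modulo $2^N$ for some $N=N(E)$. The most delicate step is proving the uniform bound $N=5$: every such image is already determined modulo $32$. I would approach this through a careful study of the filtration $K_N=\ker(\GL(2,\ZZ_2)\to\GL(2,\ZZ/2^N\ZZ))$, which for $N\geq 3$ is a pro-$2$ group whose elements can be written $I+2^N M$ with $M\in M_2(\ZZ_2)$. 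The aim is to show that once $N\geq 5$, conjugation and commutator relations already visible modulo $2^N$ propagate any open subgroup to its full preimage, so no new constraints appear at higher levels; the obstruction to doing this at smaller $N$ is the usual pathology of the exponential/logarithm correspondence in the presence of $2$-torsion in the Lie algebra. The uniform index bound "$64$ or $96$" would fall out of the same analysis: $64=2^6$ records purely $2$-power index obstructions, while the factor of $3$ in $96$ records a Serre-type quadratic entanglement with cyclotomic fields.

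Next, I would enumerate all subgroups $H\leq \GL(2,\ZZ/32\ZZ)$ satisfying the obvious necessary conditions: surjective determinant $\det(H)=(\ZZ/32\ZZ)^\times$, containment of an element with characteristic polynomial $x^2-1$ (the image of complex conjugation), and index in $\GL(2,\ZZ/32\ZZ)$ dividing the uniform bound just established. The subgroup lattice of $\GL(2,\ZZ/32\ZZ)$ is finite and tractable in \texttt{Magma}; I would collapse any two subgroups that have the same pullback to $\GL(2,\ZZ_2)$ and assemble the resulting list of candidate $2$-adic groups.

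For each candidate $H$, one must decide whether the modular curve $X_H$ parametrizing elliptic curves with $2$-adic image contained (up to conjugation) in $H$ carries non-cuspidal, non-CM $\QQ$-points. I would construct $X_H$ as an explicit cover of $X(1)=\PP^1_j$ by analyzing the action of $H$ on level structures, compute its genus and gonality, and apply the appropriate rationality tool case by case: direct parametrization for genus-$0$ curves possessing a rational point, Mordell--Weil descent for elliptic $X_H$, and Chabauty or Mordell--Weil-sieve techniques for higher-genus quotients; cusps and the finite list of CM $j$-invariants are excluded by their explicit coordinates. The main obstacle is precisely this last stage: each positive-genus $X_H$ requires individual treatment, and the final count of $1208$ realized groups emerges only after every one of the more than a thousand candidate modular curves has been handled. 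Conceptually, bounding the level uniformly at $32$ is the heart of the argument, but computationally the bulk of the work lies in the curve-by-curve rationality analysis.
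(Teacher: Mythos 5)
This statement is cited from Rouse and Zureick-Brown \cite{RZB}; the paper you are reading does not prove it, so there is no internal proof to compare against. What I can compare is your sketch against the actual structure of the RZB argument. Your overall outline---cut down to a finite problem, enumerate candidate subgroups of $\GL(2,\ZZ/32\ZZ)$ with full determinant and a complex-conjugation element, then decide rationality of each modular curve $X_H$---matches their strategy.

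The serious problem is in how you propose to establish the level bound (``every image is defined mod $32$''). You treat it as a group-theoretic fact about open subgroups of $\GL(2,\ZZ_2)$, to be proved by studying the filtration $K_N$ and showing that ``conjugation and commutator relations already visible modulo $2^N$ propagate any open subgroup to its full preimage'' for $N \geq 5$. No such statement can be true: there are open subgroups of $\GL(2,\ZZ_2)$ with surjective determinant and a complex-conjugation element of arbitrarily large index (e.g., suitable preimages of small subgroups modulo $2^{100}$), and these are manifestly not the preimage of their mod-$32$ reduction. The mod-$32$ bound is an \emph{arithmetic} fact about images of elliptic curves over $\QQ$, not a group-theoretic fact about open subgroups, and the $\exp/\log$ pathology at $p=2$ is a red herring here. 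What RZB actually do is work recursively up the levels $2, 4, 8, 16, 32$: at each level they determine which subgroups are ``arithmetically maximal'' (meaning $X_H$ has non-cuspidal, non-CM rational points) by analyzing genus, gonality, Jacobian rank, and explicit rational points; they then observe that at level $32$ the recursion closes up because every surviving subgroup is the full preimage of a level-$16$ group. Faltings' theorem and Mordell--Weil/Chabauty computations are what terminate the search, not a commutator propagation lemma. In your outline the modular-curve work appears as a downstream step that ``assembles the list''; in reality it is load-bearing in establishing the very level bound you wanted to prove first.

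A smaller point: your gloss on the index bound is also off. The factor of $3$ in $96 = 2^5\cdot 3$ comes from the $2$-adic tower itself, specifically from the index-$3$ and index-$6$ subgroups of $\GL(2,\ZZ/2\ZZ)\simeq S_3$; it has nothing to do with ``Serre-type quadratic entanglement with cyclotomic fields,'' which is an adelic phenomenon (an incompatibility between different $p$-adic images and roots of unity) and does not constrain the $2$-adic image taken in isolation.
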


\begin{conj}[Sutherland, Zywina, \cite{zywina1}]\label{conj-zyw} Let $E/\QQ$ be an elliptic curve. Let $G\subseteq \GL(2,\ZZ/p\ZZ)$ be the image of $\rho_{E,p}$. Then, there are precisely $63$ isomorphism types of images. 
\end{conj}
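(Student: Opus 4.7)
The plan is to enumerate, prime by prime and subgroup by subgroup, the images of $\rho_{E,p}$ that actually occur for elliptic curves $E/\QQ$, and to show that no other conjugacy class of subgroup of $\GL(2,\FF_p)$ can arise. The natural starting point is Dickson's classification of the subgroups of $\GL(2,\FF_p)$: up to conjugacy, each proper subgroup is contained in a Borel, a (normalizer of a) split Cartan, a (normalizer of a) non-split Cartan, or has projective image isomorphic to one of the exceptional groups $A_4$, $S_4$, or $A_5$. For each candidate $G \subseteq \GL(2,\FF_p)$, the modular curve $X_G$ parametrizes elliptic curves whose mod-$p$ image lies in (a conjugate of) $G$, so the problem reduces to determining $X_G(\QQ)$ and distinguishing which rational points give image \emph{equal} to $G$ versus strictly smaller.

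The strategy then breaks into cases according to how $p$ restricts the candidate $G$. For the Borel case, Mazur's theorem on rational isogenies limits $p$ to $\{2,3,5,7,11,13,17,19,37,43,67,163\}$, and for each such $p$ the rational points of $X_0(p)$ and the finer modular curves built from it have been determined; this yields the Borel-type images. For the split and non-split Cartan cases one appeals to the analyses of $X_{\mathrm{sp}}^+(p)$ and $X_{\mathrm{nsp}}^+(p)$ (with the work of Bilu--Parent--Rebolledo, Balakrishnan--Dogra--M\"uller--Tuitman--Vonk, and others handling small primes explicitly). For the exceptional subgroups, one uses the classification of elliptic curves whose projective mod-$p$ image is $A_4$, $S_4$, or $A_5$ to pin down the finitely many occurrences, which only happen for very small $p$. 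Combined with Serre's result that for non-CM $E/\QQ$ and all sufficiently large $p$ one has $\rho_{E,p}$ surjective, and with the explicit classification of mod-$p$ images in the CM case, this should in principle reduce the problem to a finite computation.

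The final step is to assemble the lists, check all conjugacy classes produced are distinct up to isomorphism of abstract groups (so that the count is on isomorphism types rather than conjugacy classes, collapsing some entries), and verify that each is actually realized by an explicit rational point on the relevant modular curve over $\QQ$. Explicit models of $X_G$ and searches for rational points (many of which are sporadic CM points or low-height non-CM points) confirm realizability, while rank computations or Chabauty-type arguments confirm non-realizability in the remaining cases.

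The main obstacle is the exhaustiveness half: ruling out additional mod-$p$ images for primes where $X_{\mathrm{nsp}}^+(p)$ or certain intermediate modular curves of positive rank are not yet fully analyzed. In particular, for some primes the non-split Cartan case is still conditional, and any unconditional proof requires either determining the full Mordell--Weil group of the relevant Jacobian and applying Chabauty--Coleman or quadratic Chabauty, or leveraging the Runge-type techniques of Bilu--Parent. Once every such modular curve is handled, the count of $63$ isomorphism types follows by simple bookkeeping across the cases above.
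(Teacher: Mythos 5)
The statement you are trying to prove is labeled a \emph{conjecture} in the paper, and the paper offers no proof of it: it is cited as an open problem from Sutherland and Zywina, and it remains unresolved in the literature. So there is no ``paper proof'' to compare against; the paper merely records the expected list of $63$ isomorphism types and moves on.

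Your proposal correctly outlines the \emph{strategy} that the community uses to attack this problem (Dickson's classification of subgroups of $\GL(2,\FF_p)$, reduction to rational points on the modular curves $X_G$, Mazur's theorem for the Borel case, the Bilu--Parent--Rebolledo and quadratic-Chabauty work on $X_{\mathrm{sp}}^+(p)$ and $X_{\mathrm{nsp}}^+(p)$, Serre's surjectivity for large $p$, and the CM classification). But this does not constitute a proof, and your own final paragraph says why: the exhaustiveness direction requires determining $X_{\mathrm{nsp}}^+(p)(\QQ)$ and the rational points on certain intermediate modular curves for the remaining primes, and this has not been done in all cases. The closing sentence ``Once every such modular curve is handled, the count of $63$ isomorphism types follows by simple bookkeeping'' is a conditional statement, not a proof; the antecedent is precisely the open problem. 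A genuine proof must either resolve those rational-points problems or find a different argument that bypasses them, and neither is supplied here. You should also be careful with the phrase ``distinct up to isomorphism of abstract groups'': the classification in \cite{zywina1} is by conjugacy class of subgroup (i.e.\ isomorphism of the associated Galois image as a subgroup of $\GL(2,\FF_p)$), not by abstract isomorphism type, and conflating the two would change the count.

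In short: the paper treats this as a conjecture because it is one; your proposal is a roadmap, not a proof, and it is honest about where the roadmap currently dead-ends.
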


Let us now include here some elementary results that we will use in our proofs in the next section. First, the existence of the Weil pairing implies that the roots of unity $\QQ(\zeta_n)$ are contained in the $n$-th division field.

\begin{prop}\label{prop-cyclotomic}
Let $E/\QQ$ be an elliptic curve, let $n$ be a positive integer. Then, $\det(\rho_{E,n})=\chi_n$ is the $n$-th cyclotomic character. In particular, if we let $\zeta_n$ be any primitive $n$-th root of unity, then $\QQ(\zeta_n)\subseteq \QQ(E[n])$, and for any $\sigma\in\GQ$ we have $\sigma(\zeta_{n})=(\zeta_{n})^{\det(\rho_{E,n}(\sigma))}$.
\end{prop}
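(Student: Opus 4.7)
The plan is to deduce everything from the standard properties of the Weil pairing
$$e_n\colon E[n]\times E[n]\longrightarrow \mu_n.$$
I would first recall (or cite from Silverman's \emph{Arithmetic of Elliptic Curves}, III.8) that $e_n$ is bilinear, alternating, non-degenerate, and Galois-equivariant, i.e.\ $\sigma(e_n(P,Q))=e_n(\sigma P,\sigma Q)$ for every $\sigma\in G_\Q$ and $P,Q\in E[n]$. The non-degeneracy together with the fact that $\mu_n$ is cyclic of order $n$ forces that, for any $\Z/n\Z$-basis $\{P,Q\}$ of $E[n]$, the element $\zeta:=e_n(P,Q)\in\mu_n$ is a primitive $n$-th root of unity.

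Next I would do the computation with a matrix. Fix the basis $\{P,Q\}$ and write $\rho_{E,n}(\sigma)=\smallmat{a}{b}{c}{d}$, so that $\sigma P=aP+cQ$ and $\sigma Q=bP+dQ$. Using bilinearity and the alternating property (so $e_n(P,P)=e_n(Q,Q)=1$ and $e_n(Q,P)=e_n(P,Q)^{-1}$), I compute
$$\sigma(\zeta)=\sigma(e_n(P,Q))=e_n(\sigma P,\sigma Q)=e_n(aP+cQ,\,bP+dQ)=e_n(P,Q)^{ad-bc}=\zeta^{\det\rho_{E,n}(\sigma)}.$$
Since $\zeta$ is a primitive $n$-th root of unity, this identity says exactly that $\det\rho_{E,n}(\sigma)$ coincides with the exponent by which $\sigma$ raises $\zeta$, i.e.\ with $\chi_n(\sigma)$, proving $\det\rho_{E,n}=\chi_n$. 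The same identity $\sigma(\zeta_n)=\zeta_n^{\det\rho_{E,n}(\sigma)}$ gives the final assertion of the proposition, once one notes that any other primitive $n$-th root of unity is a power $\zeta^k$ with $\gcd(k,n)=1$ and the formula is stable under such powers.

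Finally, for the inclusion $\Q(\zeta_n)\subseteq \Q(E[n])$, I observe that the Weil pairing is defined by rational functions on $E$ evaluated at points of $E[n]$; concretely, $\zeta=e_n(P,Q)$ lies in the field generated by the coordinates of $P$ and $Q$, hence $\zeta\in\Q(E[n])$. Alternatively, this is immediate from the formula just proved: if $\sigma\in G_\Q$ fixes $\Q(E[n])$ then $\rho_{E,n}(\sigma)$ is trivial, so $\det\rho_{E,n}(\sigma)=1$ and $\sigma(\zeta_n)=\zeta_n$, whence $\zeta_n\in\Q(E[n])$ by Galois theory.

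There is no real obstacle here; the proposition is a packaging of the Weil pairing's properties. The only mild subtlety is checking that the value $e_n(P,Q)$ for a basis is indeed a primitive $n$-th root of unity, which I would handle by noting that if $e_n(P,Q)$ had order $d\mid n$ strictly less than $n$, then $e_n(P,\,\cdot\,)$ would take values in the proper subgroup $\mu_d\subset\mu_n$, contradicting non-degeneracy of $e_n$.
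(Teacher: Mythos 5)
Your proof is correct and is exactly the standard Weil-pairing argument; the paper itself supplies no proof beyond a citation to Silverman III.8.1.1 and Conrad--Rubin, Lecture 6, Theorem 6.3, which establish the result in precisely this way. The only step you might tighten is the primitivity of $e_n(P,Q)$: non-degeneracy does not directly say the map $e_n(P,\cdot)$ is onto $\mu_n$, so if its image had order a proper divisor $d$ of $n$, you should pass to the point $(n/d)P$, which would be nonzero yet pair trivially with every element of $E[n]$ (since $e_n((n/d)P,R)=e_n(P,R)^{n/d}$ and $e_n(P,R)\in\mu_d$ for all $R$), giving the clean contradiction.
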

\begin{proof}
See \cite[Chapter III, Corollary 8.1.1]{silverman} and \cite[Lecture 6 Theorem 6.3]{ConradRubin}.
\end{proof}

\begin{corollary}\label{cor-order_p(p-1)k}\label{cor-det_gens}
Let $E/\QQ$ be an elliptic curve, let $p>2$ a prime, let $m,n\geq 1$, and suppose that $\QQ(\zeta_{p^n})\subseteq \QQ(E[m])$. Let $\sigma\in \GQ$ be such that its restriction to $\QQ(\zeta_{p^n})$ generates the cyclic group $\Gal(\QQ(\zeta_{p^n})/\QQ)$. Then, the image of $\sigma$ through $\rho_{E,m}$ is an element of order divisible by $\varphi(p^n)=p^{n-1}(p-1)$.
\end{corollary}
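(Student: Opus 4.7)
The plan is to use the restriction map from $\Gal(\QQ(E[m])/\QQ)$ to $\Gal(\QQ(\zeta_{p^n})/\QQ)$ and the basic fact that group homomorphisms preserve divisibility of orders. Concretely, I would first note that $\rho_{E,m}$ factors as $G_\QQ \twoheadrightarrow \Gal(\QQ(E[m])/\QQ) \hookrightarrow \GL(2,\ZZ/m\ZZ)$, so that the order of $\rho_{E,m}(\sigma)$ equals the order of $\sigma|_{\QQ(E[m])}$ in $\Gal(\QQ(E[m])/\QQ)$.

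Next, since by hypothesis $\QQ(\zeta_{p^n})\subseteq \QQ(E[m])$, Galois theory provides a surjective restriction homomorphism
\[
r\colon \Gal(\QQ(E[m])/\QQ)\longrightarrow \Gal(\QQ(\zeta_{p^n})/\QQ),
\]
and $r(\rho_{E,m}(\sigma))=\sigma|_{\QQ(\zeta_{p^n})}$. By the choice of $\sigma$, the latter generates the cyclic group $\Gal(\QQ(\zeta_{p^n})/\QQ)$, which has order $\varphi(p^n)=p^{n-1}(p-1)$.

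Finally, I would invoke the elementary fact that if $\phi\colon H\to H'$ is a group homomorphism and $h\in H$, then the order of $\phi(h)$ divides the order of $h$. Applied to $r$, this forces $\varphi(p^n)\mid \mathrm{ord}(\rho_{E,m}(\sigma))$, which is exactly the claim.

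There is no real obstacle here; the statement is essentially a packaging of Proposition \ref{prop-cyclotomic} (which guarantees $\QQ(\zeta_m)\subseteq \QQ(E[m])$ and gives the framework in which the $\chi_n$-character lives) together with the standard divisibility of orders under a group homomorphism. The only thing to be mildly careful about is that we are tracking the action on $\zeta_{p^n}$ via the restriction map rather than via $\det\circ \rho_{E,m}=\chi_m$ directly; but since the hypothesis $\QQ(\zeta_{p^n})\subseteq \QQ(E[m])$ is exactly what makes the restriction map well-defined and surjective, no further identification is needed.
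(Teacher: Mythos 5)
Your proof is correct and takes essentially the same route as the paper: both identify $\rho_{E,m}(\sigma)$ with $\sigma|_{\QQ(E[m])}$, use the surjective restriction map $\Gal(\QQ(E[m])/\QQ)\to\Gal(\QQ(\zeta_{p^n})/\QQ)\simeq(\ZZ/p^n\ZZ)^\times$, and conclude via preservation of order-divisibility under homomorphisms. The only cosmetic difference is that the paper explicitly flags $p>2$ as the reason $(\ZZ/p^n\ZZ)^\times$ is cyclic, a fact your argument tacitly relies on through the phrasing of the hypothesis.
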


\begin{proof}
This follows immediately from Proposition \ref{prop-cyclotomic}, and  the fact that if  $\QQ(\zeta_{p^n})\subseteq \QQ(E[m])$, then the restriction map of Galois groups  $\Gal(\QQ(E[m])/\QQ)\to\Gal(\QQ(\zeta_{p^n})/\QQ)\simeq(\ZZ/p^n\ZZ)^\times$ is surjective, and $(\ZZ/p^n\ZZ)^\times$ is cyclic because $p>2$.\end{proof}

Central to many of our arguments will be the fact that $\QQ(\zeta_n) \subseteq \QQ(E[n])$, but in order to prove Theorem \ref{thm:main_horizontal} we will need to better understand the fields of the form $K_E(p) = \QQ(E[p]) \cap \QQ^{ab}$. In Section \ref{sec:abelian_exts} we classify just how large the fields $K_E(p)$ can be. In order to do this we break the problem down into cases depending on what maximal group the image of $\rho_{E,p}$ is contained in. 

\begin{prop}[\cite{serre1}]\label{prop:max_groups} Let $E/\Q$ be an elliptic curve and let $p$ be a prime. Let $G$ be the image of $\rho_{E,p}\colon \GQ\to \Aut(E[p])\simeq \GL(2,\ZZ/p\ZZ)$. Then, there is exists a $\ZZ/p\ZZ$-basis for $E[p]$ such that on of the following is true:
\begin{enumerate}
\item $G = \GL(2,\ZZ/p\ZZ)$;
\item $G$ is contained in a Borel subgroup of $\GL(2,\ZZ/p\ZZ)$;
\item $G$ is contained in the normalizer of a split Cartan subgroup of $\GL(2,\ZZ/p\ZZ)$;
\item $G$ is contained in the normalizer of a non-split Cartan subgroup of $\GL(2,\ZZ/p\ZZ)$;
\item $G$ is contained in one of a finite list of ``exceptional'' subgroups.
\end{enumerate}
\end{prop}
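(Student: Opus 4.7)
The plan is to reduce to the classical theorem of Dickson classifying the finite subgroups of $\PGL_2(\FF_p)$, applied to the projective image $\overline{G}$ of $G$ inside $\PGL_2(\Z/p\Z)$. The statement to be proved is purely a group-theoretic fact about subgroups of $\GL_2(\FF_p)$: apart from the full group, any such subgroup is contained in one of the listed maximal subgroups. Nothing about elliptic curves is used beyond the identification $\Aut(E[p])\simeq \GL(2,\Z/p\Z)$ after choosing a basis.

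I would first dispose of case (1) by assuming $G\subsetneq \GL(2,\Z/p\Z)$. Next I would analyze the action of $G$ on the set of lines in $E[p]$, equivalently, the action of $\overline{G}$ on $\PP^1(\FF_p)$. If $G$ stabilizes a line $\ell\subset E[p]$, then choosing a basis whose first vector spans $\ell$ places $G$ inside the upper-triangular Borel subgroup, giving case (2). If $G$ does not stabilize a line but stabilizes an unordered pair of lines $\{\ell_1,\ell_2\}$, there are two sub-cases: if both $\ell_i$ are defined over $\FF_p$, then choosing a basis adapted to them places $G$ inside the normalizer of the split Cartan (diagonal matrices together with the swap), giving case (3); if instead the pair is only defined over $\FF_{p^2}$ (so Galois in $\PGL_2(\FF_p)/\PGL_2(\FF_p)$, or rather the action swaps them), then $G$ lies in the normalizer of a non-split Cartan, giving case (4).

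The remaining case is when $\overline{G}$ has no orbit of size $\leq 2$ on $\PP^1(\FF_p)$. Here I would invoke Dickson's theorem (see, e.g., Serre \cite{serre1}, \S 2), which asserts that every subgroup of $\PGL_2(\FF_p)$ is, up to conjugation, one of: a cyclic group, a dihedral group, a copy of $A_4$, $S_4$, or $A_5$, or a subgroup containing $\PSL_2(\FF_p)$. Cyclic subgroups have a fixed point on $\PP^1(\overline{\FF_p})$ hence a fixed point or pair on $\PP^1(\FF_p)$, so they land inside a Cartan (handled above). Dihedral subgroups preserve an unordered pair, so they lie in a Cartan normalizer (also handled). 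Subgroups containing $\PSL_2(\FF_p)$ would force $G$ to surject onto $\PGL_2(\FF_p)$ and, combined with the surjectivity of $\det\colon G\to (\Z/p\Z)^\times$ supplied by the Weil pairing (Proposition~\ref{prop-cyclotomic}), this would give $G=\GL(2,\Z/p\Z)$, contradicting our assumption. The only possibilities left are $\overline{G}\simeq A_4,S_4,A_5$, which is case (5).

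The main (and really only) subtlety is the appeal to Dickson's classification, which is not re-proved here; everything else is a short case analysis once the action of $\overline{G}$ on $\PP^1(\FF_p)$ is understood. Since this proposition is Serre's \cite{serre1}, the cleanest presentation is probably to state the Borel / Cartan-normalizer cases explicitly, cite Dickson for the rest, and note that the elimination of the $\PSL_2(\FF_p)\subseteq \overline{G}$ alternative uses the determinant surjectivity already recorded in Proposition~\ref{prop-cyclotomic}.
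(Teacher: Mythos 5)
The paper does not give its own proof of this proposition; it simply cites Serre \cite{serre1}, where the result is established precisely by the argument you sketch: reduce to the classification of subgroups of $\PGL_2(\FF_p)$ (Dickson/Serre, Proposition 16 of \cite{serre1}), isolate the Borel and Cartan-normalizer cases via the induced action on $\PP^1(\FF_p)$, and rule out $\PSL_2(\FF_p)\subseteq\overline G$ using the surjectivity of the determinant coming from the Weil pairing. Your outline is correct and matches the standard source the paper is deferring to.
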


Question \ref{ques:main} is best phrased and studied within the context of Galois representations. Fix an integer $n\geq 2$, and fix a $\ZZ/n\ZZ$-basis of $E[n]$. The absolute Galois group of $\QQ$ acts on $E[n]$ and induces a Galois representation $\rho_{E,n}\colon \GQ\to \Aut(E[n])\simeq\GL(2,\ZZ/n\ZZ)$, such that $\Ker(\rho_{E,n})=\Gal(\overline{\QQ}/\QQ(E[n]))$. Let us denote $\kappa_n = \Ker(\rho_{E,n})$ and $G_n=\Im(\rho_{E,n})\subseteq \GL(2,\ZZ/n\ZZ)$. Then, part (1) of Question \ref{ques:main} asks when is it possible that $\kappa_m=\kappa_n$ for distinct integers $m,n\geq 2$. Theorem \ref{thm:main_vertical} studies elliptic curves with $\kappa_{p^n}=\kappa_{p^{n+1}}$ or, equivalently, curves $E$ such that $G_{p^{n+1}}$ is isomorphic to $G_{p^n}$. If we denote the reduction mod $p^n$ map by $\pi_{p^n}\colon \GL(2,\ZZ/p^{n+1}\ZZ)\to \GL(2,\ZZ/p^n\ZZ)$ and $Z_{p^n}=\Ker(\pi_{p^n})$, then we are trying to find elliptic curves such that $G_{p^{n+1}}\cap Z_{p^n}$ is trivial. 

\begin{example}
	Let us look at Example \ref{ex-2} from the point of view of Galois representations. Let $E\colon y^2=x^3+13x-34$ and let $E(\QQ(i))_\text{tors}=E[4]=\langle P,Q\rangle$ where $P=(7,-20)$ and $Q=(-3-10i,30+10i)$. Then, 
	$\overline{Q}=(-3+10i,30-10i) = P+3Q$. In particular, if we write $\rho_{E,4}$ with respect to the basis $\{P,Q\}$ of $E[4]$, then the image $G_4$ is 
	$$\left\{ \left(\begin{matrix}
	1 & 0\\
	0 & 1
	\end{matrix}\right) , \left(\begin{matrix}
	1 & 1\\
	0 & 3
	\end{matrix}\right) \right\}\subseteq \GL(2,\ZZ/4\ZZ),$$
	while $Z_2 = \operatorname{Id}+2\cdot M(2,\ZZ/4\ZZ)$, where $M(2,\ZZ/4\ZZ)$ are the $2\times 2$ matrices with coordinates in $\ZZ/4\ZZ$. Hence, $Z_2\cap G_4$ is trivial, as claimed, and $\Gal(\QQ(E[4])/\QQ)\simeq G_4\simeq G_2\simeq \Gal(\QQ(E[2])/\QQ)$. Since $\QQ(E[2])\subseteq \QQ(E[4])$ we conclude that the $2$-nd and $4$-th division fields are actually equal.
\end{example}

From the point of view of representations and kernels of reduction maps, our Theorem \ref{thm:main_vertical} is at the opposite side of the spectrum from the following theorem of Dokchitser, Dokchitser, and Elkies, which determines when $Z_{p^n}\subseteq G_{p^{n+1}}$. 
\begin{thm}[Serre \cite{serre1}, Elkies \cite{elkies}, Dokchitser, Dokchitser \cite{dok}]\label{thm-dde} Let $E/\QQ$ be an elliptic curve, let $p$ be a prime, and let $n\geq 1$. If $\rho_{E,p^n}$ is surjective, then $\rho_{E,p^{n+1}}$ is surjective, unless $p^n=2$, $3$, or $4$. Moreover, the $j$-invariants of elliptic curves where $\rho_{E,p^n}$ is surjective but $\rho_{E,p^{n+1}}$ is not, are given explicitly by $1$-parameter families.
\end{thm}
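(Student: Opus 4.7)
The plan is to split the theorem into two independent pieces: a group-theoretic lifting lemma that surjectivity mod $p^n$ forces surjectivity mod $p^{n+1}$ outside the three exceptional levels, and an explicit construction of the sporadic $1$-parameter families via modular curves.

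For the lifting step, identify the kernel of reduction $K_n := \ker\bigl(\GL(2,\Z/p^{n+1}\Z) \twoheadrightarrow \GL(2,\Z/p^n\Z)\bigr)$ with the additive group $M_2(\FF_p)$ via $I + p^n A \mapsto A$; conjugation by $\GL(2,\Z/p^{n+1}\Z)$ then factors through the adjoint action of $\GL(2, \FF_p)$ on $M_2(\FF_p)$. If $G \subseteq \GL(2,\Z/p^{n+1}\Z)$ surjects mod $p^n$, then $G \cap K_n$ is a $G$-stable $\FF_p$-subspace of $M_2(\FF_p)$. The key input is that $\SL_2(\Z/p^n\Z)$ is perfect whenever $p^n \geq 5$; combined with the irreducibility of the adjoint representation of $\SL_2(\FF_p)$ on the trace-zero part of $M_2(\FF_p)$ for $p$ odd, an explicit commutator computation with lifts of the unipotents $\smallmat{1}{1}{0}{1}$ and $\smallmat{1}{0}{1}{1}$ forces $G \cap K_n = K_n$, whence $G = \GL(2, \Z/p^{n+1}\Z)$. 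The perfectness fails precisely when $p^n \in \{2, 3, 4\}$, since $\SL_2(\FF_2) \cong S_3$, $\SL_2(\FF_3)$, and $\SL_2(\Z/4\Z)$ all have non-trivial abelianization; conceptually, the obstruction to lifting surjectivity lives in a cohomology group that vanishes exactly when $\SL_2(\Z/p^n\Z)^{\text{ab}}$ is trivial, which is the exact source of the three exceptions.

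For the explicit families in the exceptional cases, one enumerates the proper subgroups $H \subsetneq \GL(2,\Z/p^{n+1}\Z)$ whose image mod $p^n$ is the whole of $\GL(2,\Z/p^n\Z)$. Each such $H$ determines a modular curve $X_H$ whose non-cuspidal, non-CM rational points parametrize elliptic curves $E/\Q$ with $\rho_{E,p^{n+1}}(G_\Q)$ conjugate into $H$; one then checks that each such $X_H$ is genus zero with a rational point, and exhibits an explicit $j$-line map $X_H \to X(1)$ yielding the announced $1$-parameter family. For $p^n = 3$ this is the Elkies computation for two specific rational modular curves at level $9$. For $p^n \in \{2, 4\}$ the result is read off from the Rouse--Zureick-Brown classification of Theorem \ref{thm-rzb}, which already enumerates all $2$-adic images. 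The main obstacle is the detailed enumeration and genus-zero verification of the sporadic $X_H$ and producing explicit $j$-maps; once the short list of candidate $H$ is in hand the remaining work is routine modular-curve bookkeeping.
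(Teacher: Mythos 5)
The paper itself does not prove Theorem~\ref{thm-dde}; it is imported as a black box from Serre, Elkies, and Dokchitser--Dokchitser, and the only commentary in the paper is a translation into the language of images $G_{p^n}$ and kernels $Z_{p^n}$. So your outline should be judged on its own. Your second half (enumerate the proper subgroups $H\subsetneq\GL(2,\ZZ/p^{n+1}\ZZ)$ surjecting mod $p^n$, realize each as a modular curve $X_H$, check genus zero with a rational point, and write down the $j$-line) is the correct picture and matches the Elkies and Dokchitser--Dokchitser treatments; for $p^n\in\{2,4\}$ it does reduce to reading off the Rouse--Zureick-Brown data of Theorem~\ref{thm-rzb}.

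The lifting half, however, relies on a false group-theoretic claim. You assert that $\SL_2(\ZZ/p^n\ZZ)$ is perfect whenever $p^n\geq 5$, and that the exceptional set $\{2,3,4\}$ is exactly the set of levels at which $\SL_2(\ZZ/p^n\ZZ)^{\mathrm{ab}}$ is nontrivial. This is not so: $\SL_2(\ZZ/8\ZZ)$ surjects onto $\SL_2(\FF_2)\cong S_3$ and $\SL_2(\ZZ/9\ZZ)$ surjects onto $\SL_2(\FF_3)$, both of which have nontrivial abelianization, so $\SL_2(\ZZ/p^n\ZZ)$ is imperfect for \emph{every} $n$ once $p\in\{2,3\}$. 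If the obstruction to lifting really lived in $\SL_2(\ZZ/p^n\ZZ)^{\mathrm{ab}}$ as you describe, one would predict exceptions at $(p^n,p^{n+1})=(8,16)$ and $(9,27)$, which the theorem explicitly rules out. The correct input is perfectness of $\SL_2(\FF_p)$ for $p\geq 5$ (Serre's Lemma, cf.\ \cite[IV--23, Lemma 3]{serre}); for $p=2,3$ one cannot appeal to a perfectness/abelianization criterion at all and must instead run a finer argument (a Frattini/commutator argument one level higher, or explicit enumeration of subgroups of $\GL(2,\ZZ/8\ZZ)$ and $\GL(2,\ZZ/27\ZZ)$) to show that surjectivity mod $8$, resp.\ mod $9$, already forces surjectivity mod $16$, resp.\ mod $27$. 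As written, your criterion produces the wrong exceptional set, so the lifting step needs to be re-founded before the rest of the argument can be trusted.
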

Indeed, for $n\geq 1$ we have $G_{p^n}=\GL(2,\ZZ/p^n\ZZ)$ and $Z_{p^n}\subseteq G_{p^{n+1}}$if and only if  $G_{p^{n+1}}=\GL(2,\ZZ/p^{n+1}\ZZ)$, because the reduction map $\pi_{p^n}$ is surjective. Thus, Theorem \ref{thm-dde} shows that if $\pi_{p^n}(G_{p^{n+1}})=G_{p^n}$ and $Z_{p^n}\cap G_{p^{n+1}}\neq Z_{p^n}$, then $p^n=2$, $3$, or $4$. The fact that the surjectivity of $\rho_{E,p^n}$ implies $\rho_{E,p^{n+1}}$ for $p\geq 5$ and $n\geq 1$  was known by work of Serre (cf. \cite[IV-23, Lemma 3]{serre}).

\section{Coincidences in towers}\label{sec-towers}

The goal of this section is to prove Theorem \ref{thm:main_vertical}. In other words, this section is concerned with the possibility of an equality $\Q(E[p^n])=\Q(E[p^{n+1}])$ for some prime $p$ and $n\geq 1$. In the spirit of Question \ref{ques:main}, we are also interested in whether $\Q(E[p^n])\cap \Q(\zeta_{p^{n+1}})$ can be larger than $\Q(\zeta_{p^n})$. We answer these questions first for odd primes, and then we shall turn our attention to the case of $p=2$.

\subsection{The Case $p\geq 3$}\label{sec:odd_vert}

The goal of this section is to prove the case of Theorem \ref{thm:main_vertical} when $p$ is an odd prime. In fact, we would like to prove that $\Q(E[p^n])\cap \Q(\zeta_{p^{n+1}})=\Q(\zeta_{p^n})$. 

\begin{prop}\label{prop-pnroots}
Let $E/\QQ$ an elliptic curve and $p\geq 3$ be a prime. Then, for every $n\in\NN$, the field $\QQ(E[p^n])$ does not contain the $p^{n+1}$-th roots of unity. 
\end{prop}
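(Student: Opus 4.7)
The strategy is to assume $\Q(\zeta_{p^{n+1}})\subseteq \Q(E[p^n])$ and derive a contradiction by exhibiting an element of $G_{p^n}$ whose order and determinant are incompatible. Under the assumption, Proposition \ref{prop-cyclotomic} implies that the cyclotomic character $\chi_{p^{n+1}}\colon G_\Q\to (\ZZ/p^{n+1}\ZZ)^\times$ factors through $\rho_{E,p^n}$, giving a character $\psi\colon G_{p^n}\to (\ZZ/p^{n+1}\ZZ)^\times$ that lifts $\det\circ\rho_{E,p^n}=\chi_{p^n}$. Because $p$ is odd, $(\ZZ/p^{n+1}\ZZ)^\times$ is cyclic of order $p^n(p-1)$ and $\psi$ is surjective, so I would pick $\sigma\in G_{p^n}$ with $\psi(\sigma)$ a generator. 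For such a $\sigma$, its order in $G_{p^n}$ is divisible by $p^n(p-1)$, while $\det(\sigma)=\psi(\sigma)\bmod p^n$ generates $(\ZZ/p^n\ZZ)^\times$; in particular $\det(\sigma)\bmod p$ has order $p-1$ in $\FF_p^\times$.

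The heart of the argument is then to analyze such a $\sigma\in \GL(2,\ZZ/p^n\ZZ)$. Write $\bar\sigma:=\sigma\bmod p\in \GL(2,\FF_p)$ and $\mathrm{ord}(\bar\sigma)=p^e d$ with $\gcd(p,d)=1$; since the Sylow $p$-subgroups of $\GL(2,\FF_p)$ have order $p$, we have $e\in\{0,1\}$. For $p$ odd, the principal congruence subgroup $I+pM_2(\ZZ/p^n\ZZ)$ has exponent $p^{n-1}$ (via the $p$-adic $\log/\exp$ correspondence). Since $\sigma^{p^e d}\bmod p=I$, the element $\sigma^{p^e d}$ lies in this congruence subgroup, and hence $\mathrm{ord}(\sigma)\mid p^e d\cdot p^{n-1}$. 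If $e=0$, the $p$-part of $\mathrm{ord}(\sigma)$ is at most $p^{n-1}$, contradicting $p^n\mid \mathrm{ord}(\sigma)$. So we must have $e=1$.

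In the case $e=1$, the element $\bar\sigma$ centralizes its non-trivial unipotent power $\bar\sigma^d$, and the centralizer of a non-trivial unipotent in $\GL(2,\FF_p)$ consists of matrices of the form $\smallmat{a}{b}{0}{a}$. Thus, up to conjugation, $\bar\sigma=\smallmat{a}{b}{0}{a}$ with $b\neq 0$, whence $\mathrm{ord}(\bar\sigma)=p\cdot\mathrm{ord}(a)$ and $d=\mathrm{ord}(a)$ divides $p-1$. Combining $p^n(p-1)\mid \mathrm{ord}(\sigma)\mid d\cdot p^n$ forces $d=p-1$, i.e., $a$ is a primitive root mod $p$. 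But then $\det(\bar\sigma)=a^2$ has order $(p-1)/\gcd(2,p-1)=(p-1)/2$ in $\FF_p^\times$ (using $p$ odd), contradicting the requirement that $\det(\sigma)\bmod p$ have order $p-1$.

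The two main technical inputs are the exponent bound $p^{n-1}$ for the first congruence subgroup in $\GL(2,\ZZ/p^n\ZZ)$, and the rigid structural description of elements of $\GL(2,\FF_p)$ whose order is divisible by $p$. Both ingredients rely on $p$ being odd in an essential way, which is consistent with the failure of the statement for $p=2$ recorded in Theorem \ref{thm-2nroots-intro}.
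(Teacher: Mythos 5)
Your proof is correct and follows essentially the same strategy as the paper: both derive a contradiction by showing that any $\sigma$ whose image $A_n=\rho_{E,p^n}(\sigma)$ has order divisible by $p^n(p-1)$ must have $\det(A_n)$ reduce to a quadratic residue mod $p$, which cannot generate $(\ZZ/p^n\ZZ)^\times$. The technical ingredients coincide --- the exponent bound $p^{n-1}$ on the principal congruence subgroup of $\GL(2,\ZZ/p^n\ZZ)$ (the paper's Lemma~\ref{lem-ker_orders}) and the fact that an element of $\GL(2,\FF_p)$ of order divisible by $p$ lies in the centralizer of a nontrivial unipotent, hence has square determinant --- though the paper packages the latter as Lemma~\ref{lem:det_is_sq} via an explicit coprime decomposition $A_n = B_n C_n$ and commutation computation in $\GL(2,\FF_p)$, whereas you carry out the same analysis directly on $\bar\sigma$ after splitting into the cases $e=0$ and $e=1$.
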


As a corollary, we obtain:

\begin{thm}\label{prop-p_geq_3}
Let $E/\QQ$ be an elliptic curve and $p\geq 3$ a prime. Then, for every $n\in\NN$, we have that $\QQ(E[p^n])\neq \QQ(E[p^{n+1}])$.
\end{thm}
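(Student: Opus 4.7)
The theorem will follow immediately from Proposition \ref{prop-pnroots}: if $\QQ(E[p^n])=\QQ(E[p^{n+1}])$, then Proposition \ref{prop-cyclotomic} forces $\QQ(\zeta_{p^{n+1}})\subseteq \QQ(E[p^{n+1}])=\QQ(E[p^n])$, contradicting the Proposition. So my plan is to prove Proposition \ref{prop-pnroots}.

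Suppose for contradiction that $\QQ(\zeta_{p^{n+1}})\subseteq\QQ(E[p^n])$. First, Corollary \ref{cor-order_p(p-1)k} produces a $\sigma\in G_\QQ$ whose image $M:=\rho_{E,p^n}(\sigma)\in\GL(2,\ZZ/p^n\ZZ)$ has order divisible by $\varphi(p^{n+1})=p^n(p-1)$. Second, by Proposition \ref{prop-cyclotomic}, $\det M=\chi_{p^n}(\sigma)$ is the mod-$p^n$ reduction of $\chi_{p^{n+1}}(\sigma)$, and since $\sigma$ restricts to a generator of $\Gal(\QQ(\zeta_{p^{n+1}})/\QQ)$, this restriction is a generator of $(\ZZ/p^{n+1}\ZZ)^\times$. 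Hence $\det M$ is a generator of $(\ZZ/p^n\ZZ)^\times$, and in particular its reduction $\det\bar M\in\FF_p^\times$ has exact order $p-1$, where $\bar M$ denotes the reduction of $M$ modulo $p$.

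Next, I will exploit the filtration of $\GL(2,\ZZ/p^n\ZZ)$ by kernels of reduction. For odd $p$, the binomial identity $(I+p^sX)^p\equiv I+p^{s+1}X\pmod{p^{s+2}}$ shows that every element of $\ker\bigl(\GL(2,\ZZ/p^n\ZZ)\to\GL(2,\FF_p)\bigr)$ has order dividing $p^{n-1}$. Writing the order of $M$ as $p^a m$ with $\gcd(m,p)=1$, this forces $a\leq \bar a+(n-1)$, where $\bar a$ is the $p$-adic valuation of the order of $\bar M$. Since $|\GL(2,\FF_p)|=p(p-1)^2(p+1)$ has $p$-adic valuation $1$, we have $\bar a\leq 1$, and therefore $a\leq n$. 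The divisibility $p^n\mid p^am$ then forces $a=n$ and $\bar a=1$.

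Finally, an element $\bar M\in\GL(2,\FF_p)$ whose order is divisible by $p$ is, up to conjugation, of the form $\left(\begin{smallmatrix}\lambda & 1\\0 & \lambda\end{smallmatrix}\right)$ for some $\lambda\in\FF_p^\times$, so $\det\bar M=\lambda^2$ has order in $\FF_p^\times$ dividing $(p-1)/\gcd(p-1,2)=(p-1)/2$, strictly less than $p-1$ for $p$ odd. This contradicts the earlier conclusion that $\det\bar M$ has order exactly $p-1$. The main obstacle is the order-theoretic analysis inside $\GL(2,\ZZ/p^n\ZZ)$, which reduces to the clean binomial calculation in the pro-$p$ kernel of reduction; once that bound is in hand, the compatibility between $\det$ and the cyclotomic character delivers the contradiction.
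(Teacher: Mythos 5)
Your proof is correct, and its overall architecture matches the paper's: reduce Theorem~\ref{prop-p_geq_3} to Proposition~\ref{prop-pnroots} via the Weil pairing, then derive a contradiction from the fact that $\det\rho_{E,p^n}(\sigma)$ must simultaneously generate $(\ZZ/p^n\ZZ)^\times$ (cyclotomic character) and be a square mod $p$ (structure of $\GL_2$). The one genuine difference is in how you establish the square-determinant claim. The paper's Lemma~\ref{lem:det_is_sq} factors $A_n=B_nC_n$ into commuting pieces of $p$-power and prime-to-$p$ order, reduces mod $p$, and runs an explicit computation with a unipotent $B_1$ and a commuting $C_1$ to show $C_1$ is scalar-plus-nilpotent; you instead note that $\bar{M}$ having order divisible by $p$ forces a repeated $\FF_p$-eigenvalue, so $\bar{M}$ is conjugate to $\left(\begin{smallmatrix}\lambda&1\\0&\lambda\end{smallmatrix}\right)$ and $\det\bar{M}=\lambda^2$. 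This is a cleaner route to the same fact, trading the paper's elementary commutator calculation for the Jordan-form observation. You also fold Lemmas~\ref{lem-max_prime_powers} and \ref{lem-ker_orders} into a single valuation estimate $a\le\bar a+(n-1)$ coming from the binomial identity in the congruence filtration, which is efficient. All steps check out, and the final divisibility bookkeeping ($a=n$, $\bar a=1$) is handled correctly.
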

\begin{proof}
Suppose that $\QQ(E[p^n])=\QQ(E[p^{n+1}])$ for some $p\geq 3$ and $n\geq 1$. Then, $\QQ(\mu_{p^{n+1}})\subseteq \QQ(E[p^{n+1}])=\QQ(E[p^n])$ by Prop. \ref{prop-cyclotomic}, which contradicts the conclusion of Prop. \ref{prop-pnroots}.
\end{proof}

In order to show Proposition \ref{prop-pnroots}, we will need two lemmas about the elements of $\GL(2,\ZZ/p^n\ZZ)$ with $p$-power order and a lemma about element with order divisible by $\varphi(p^n) $.

\begin{lemma}\label{lem-max_prime_powers}
    If $p$ is a prime and $A\in\GL(2,\ZZ/p^n\ZZ)$ such that the order of $A$ is $p^k$ for some $k\in\NN$, then $1\leq k\leq n$ and all such orders occur.
\end{lemma}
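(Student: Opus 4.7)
The plan is to establish the upper bound $k\leq n$ via a reduction-mod-$p$ argument, and then to exhibit explicit matrices realizing each order $p^1,\ldots,p^n$.

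First, I would consider the reduction map $\pi\colon \GL(2,\ZZ/p^n\ZZ)\to \GL(2,\FF_p)$, whose kernel I will write as $\Gamma(p) = I + p\cdot M(2,\ZZ/p^n\ZZ)$, and more generally set $\Gamma(p^j) = I + p^j\cdot M(2,\ZZ/p^n\ZZ)$. Since $|\GL(2,\FF_p)| = p(p-1)^2(p+1)$ has $p$-part exactly $p$, every element of $p$-power order in $\GL(2,\FF_p)$ has order dividing $p$. Hence $\pi(A)^p = I$, so that $A^p\in \Gamma(p)$.

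The key technical fact, which handles both $p$ odd and $p=2$ at once, is that for every $j\geq 1$ and every $M\in M(2,\ZZ/p^n\ZZ)$ one has $(I+p^jM)^p\in \Gamma(p^{j+1})$. This follows by expanding binomially,
$$(I+p^jM)^p \;=\; I + p^{j+1}M \;+\; \sum_{i=2}^{p-1}\binom{p}{i}p^{ij}M^i \;+\; p^{pj}M^p,$$
and checking $p$-adic valuations termwise: for $2\leq i\leq p-1$ (an empty range when $p=2$) we have $p\mid\binom{p}{i}$, so the $i$-th term has $p$-valuation at least $1+ij\geq j+1$, and the last term has valuation $pj\geq j+1$ since $(p-1)j\geq 1$. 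Iterating this fact starting from $B := A^p\in \Gamma(p)$ gives $B^p\in\Gamma(p^2)$, $B^{p^2}\in\Gamma(p^3)$, and ultimately $B^{p^{n-1}}\in \Gamma(p^n) = \{I\}$, whence $A^{p^n} = I$. Since the order of $A$ is a power of $p$ by hypothesis, this forces $1\leq k\leq n$.

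For the realization of every order, given $1\leq k\leq n$ I would take
$$U_k \;=\; \begin{pmatrix}1 & p^{n-k}\\ 0 & 1\end{pmatrix}\in \GL(2,\ZZ/p^n\ZZ),$$
which satisfies $U_k^m = \begin{pmatrix}1 & m\,p^{n-k}\\ 0 & 1\end{pmatrix}$; this reduces to the identity mod $p^n$ precisely when $p^k\mid m$, so $U_k$ has order exactly $p^k$. No step is genuinely difficult; the only point requiring care is the uniform valuation estimate, in particular the boundary case $p=2$, $j=1$, where the cross term $2^{2j}M^2 = 4M^2$ only just lands inside $\Gamma(p^{j+1}) = \Gamma(4)$.
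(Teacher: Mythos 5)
Your proof is correct and follows essentially the same approach as the paper: the upper bound comes from showing that the $p$-th power map pushes elements one level deeper in the congruence filtration $\Gamma(p^j)=I+p^j M(2,\ZZ/p^n\ZZ)$, and the realization uses the identical unipotent matrices $\smallmat{1}{p^{n-k}}{0}{1}$. The only cosmetic difference is that the paper packages the cascade as an induction on $n$ (so that at each rung the arithmetic is the trivial relation $(I+p^\ell M)^p = I + p^{\ell+1}M$ in $\ZZ/p^{\ell+1}\ZZ$), whereas you prove the uniform binomial--valuation estimate $(I+p^jM)^p\in\Gamma(p^{j+1})$ once in $\ZZ/p^n\ZZ$ and iterate it.
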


\begin{proof}
We start by noticing that the matrix $\left(\begin{smallmatrix}
    1&p^{n-k}\\0&1
\end{smallmatrix}
  \right)$ has order $p^k$ and so $\GL(2,\ZZ/p^n\ZZ)$ has an element of order $p^k$ for every $1\leq k \leq n$. To prove that there are no elements of higher order, we proceed by induction on $n$. The case of $n=1$ follows from Lagrange's theorem and the fact that $\#\GL(2,\ZZ/p\ZZ)=p(p-1)(p^2-1).$

Next, we assume that the lemma is true for $n=\ell$ and let $A\in\GL(2,\ZZ/p^{\ell+1}\ZZ)$ such that $A$ has order $p^k$ for some $k\in\NN$. We aim to show that $A^{p^{\ell+1}}$ is the identity and so the order of $A$ divides $p^{\ell+1}$. Since $A^{p^k}=\operatorname{Id}$, it follows that $A^{p^k}\equiv \operatorname{Id}\bmod p^\ell$ and, by the induction hypothesis, the order of $A\bmod p^\ell$ must be $p^{k'}$ with $1\leq k'\leq \ell$. Thus, we have $A^{p^\ell} \equiv  \operatorname{Id} \bmod p^\ell$, and so we may write  $A^{p^\ell} = \left(\begin{smallmatrix}
1+ap^\ell&bp^\ell\\
cp^\ell&1+dp^\ell
\end{smallmatrix}\right)$ for some $a,b,c,d\in\{0,1,\dots,p-1\}$. By induction on $m$ one can show that 
$$(A^{p^\ell})^m = \left(\begin{matrix}
1+map^\ell & mbp^\ell\\
mcp^\ell & 1+mdp^\ell
\end{matrix}\right)\in \GL(2,\ZZ/p^{\ell+1}\ZZ)$$
and so $A^{p^{\ell+1}} \equiv  (A^{p^\ell})^p \equiv  \left(\begin{smallmatrix}1&0\\0&1 \end{smallmatrix}\right) \bmod p^{\ell+1}$. Thus, the order of $A$ divides $p^{\ell+1}$, which completes the induction step, and we conclude the proof.
\end{proof}

\begin{lemma}\label{lem-ker_orders}
    Let $p$ be a prime, $n\in\NN$, and let $\pi_{p,n}\colon  \GL(2,\ZZ/p^n\ZZ)\to \GL(2,\ZZ/p\ZZ)$ be the natural reduction modulo $p$ map. Then, the elements in $\ker\pi_{p,n}$ have order dividing $p^{n-1}$. 
\end{lemma}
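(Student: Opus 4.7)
The plan is to exploit the abelian structure of $\ker\pi_{p,n}$ via the binomial theorem. Any matrix $A \in \ker\pi_{p,n}$ admits an entry-wise lift to $M_2(\Z)$ of the form $A = I + pB$ for some $B \in M_2(\Z)$. Since $I$ and $pB$ commute, the binomial theorem gives
$$A^{p^{n-1}} = \sum_{k=0}^{p^{n-1}} \binom{p^{n-1}}{k} p^{k} B^{k},$$
and it suffices to show that every term with $k \geq 1$ is congruent to $0$ modulo $p^n$.

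To that end, I would use the standard valuation formula $v_p\!\left(\binom{p^{n-1}}{k}\right) = (n-1) - v_p(k)$ for $1 \leq k \leq p^{n-1}$. This reduces the problem to checking the elementary inequality $k - v_p(k) \geq 1$ for every integer $k \geq 1$. Writing $v_p(k) = j$, we have $k \geq p^{j}$, so it is enough to verify $p^{j} \geq j+1$ for every prime $p \geq 2$ and every $j \geq 0$, which is immediate by induction on $j$.

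Combining these observations yields $A^{p^{n-1}} \equiv I \pmod{p^{n}}$, which is the claim. The base case $n=1$ is degenerate but trivial: $\ker\pi_{p,1}$ is the trivial subgroup, and $p^{n-1} = 1$, so every element has order dividing $1$ vacuously. The only step requiring care is the valuation identity for $\binom{p^{n-1}}{k}$, but this is classical (it follows from Kummer's theorem, or from writing $k\binom{p^{n-1}}{k} = p^{n-1}\binom{p^{n-1}-1}{k-1}$ and taking valuations). I do not anticipate any real obstacle here; the proof is essentially a one-line computation once the binomial expansion is in place.
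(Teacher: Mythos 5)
Your proof is correct but takes a genuinely different route from the paper's. The paper proves the lemma by induction on $n$: assuming the claim for $n=k$, it writes $A^{p^{k-1}} = I + p^k C$ for an element $A \in \ker\pi_{p,k+1}$ and then checks by a direct matrix computation that raising such an element to the $p$-th power returns the identity modulo $p^{k+1}$, because all the cross terms acquire an extra factor of $p^k$. You instead argue directly, in one pass, by lifting $A$ to $I + pB$ over $\Z$, expanding $(I + pB)^{p^{n-1}}$ with the binomial theorem (valid since $I$ commutes with everything), and bounding the $p$-adic valuation of each term via $v_p\!\bigl(\binom{p^{n-1}}{k}\bigr) = (n-1) - v_p(k)$ together with the elementary inequality $k - v_p(k) \geq 1$. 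That valuation identity is classical, as you say, and in fact your argument only needs the weaker inequality $v_p\!\bigl(\binom{p^{n-1}}{k}\bigr) \geq (n-1) - v_p(k)$, which follows immediately from $k\binom{p^{n-1}}{k} = p^{n-1}\binom{p^{n-1}-1}{k-1}$. The tradeoff: the paper's induction is entirely self-contained and uses only matrix multiplication, while your approach is shorter and more conceptual but leans on a known fact about binomial valuations. Both are valid, and your version has the minor advantage of making clear that the bound is sharp without any extra work.
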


\begin{proof} Again, we proceed by induction on $n$. The case of $n=1$ follows from the fact that $\pi_{p,1}$ is the identity map and the only element in the kernel is the identity which has order $1 = p^{0} = p^{n-1}$.

Next, assume that the result is true for some $k\in \NN$. Let $A\in\ker\pi_{p,k+1}$ and let $\bar{A}$ be the image of $A$ under the reduction map $\GL(2,\ZZ/p^{k+1}\ZZ)\to\GL(2,\ZZ/p^k\ZZ)$. Then, by assumption, we know that $\bar{A}^{p^{k-1}} = \operatorname{Id}\in\GL(2,\ZZ/p^k\ZZ)$, thus
$$A^{p^{k-1}}= \begin{pmatrix}
1+p^ka& p^kb\\p^kc&1+p^kd
\end{pmatrix},$$
for some $a,b,c,d\in\ZZ/p^k\ZZ$.
Again by induction on $m$,
$$\left(A^{p^{k-1}}\right)^m= \begin{pmatrix}
1+mp^ka& mp^kb\\mp^kc&1+mp^kd
\end{pmatrix},$$
and, in particular, $A^{p^{k}} = \left(A^{p^{k-1}}\right)^p = \operatorname{Id}\in\GL(2,\ZZ/p^{k+1}\ZZ).$ This finishes the proof.
\end{proof}

\begin{lemma}\label{lem:det_is_sq}
If $A_n\in \GL(2,\ZZ/p^n\ZZ)$ is a matrix with order divisible by $\varphi(p^{n+1})=p^n(p-1)$, then $\det(A_n)$ is a square modulo $p$. 
\end{lemma}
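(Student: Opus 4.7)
The plan is to reduce mod $p$ and analyze the constraints the hypothesis places on the image $\bar A \in \GL(2,\ZZ/p\ZZ)$ of $A_n$. First, we may assume $p$ is odd, since for $p=2$ the determinant lies in $(\ZZ/2^n\ZZ)^\times$, so its reduction mod $2$ is $1$, which is trivially a square.

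Let $\pi_{p,n}\colon\GL(2,\ZZ/p^n\ZZ)\to\GL(2,\ZZ/p\ZZ)$ be the reduction map, write $\bar A = \pi_{p,n}(A_n)$, and let $M$ be the order of $\bar A$. Since $A_n^M$ lies in $\ker(\pi_{p,n})$, Lemma \ref{lem-ker_orders} gives $(A_n^M)^{p^{n-1}}=I$, so the order of $A_n$ divides $Mp^{n-1}$. Combined with the hypothesis $p^n(p-1)\mid \mathrm{ord}(A_n)$, this yields $p(p-1)\mid M$.

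Next I would classify elements of $\GL(2,\FF_p)$ whose order is divisible by $p(p-1)$. Every element of $\GL(2,\FF_p)$ has order dividing either $p(p-1)$ (if it has an eigenvalue in $\FF_p$) or $p^2-1$ (if it comes from a non-split Cartan). Since $p\nmid(p^2-1)$ and $p\mid M$, the second case is impossible, so $M=p(p-1)$ exactly. The Jordan decomposition $\bar A=\bar A_s\bar A_u$ then forces $\bar A_u$ to have order $p$ (hence $\bar A_u\neq I$), which in turn forces $\bar A_s$ to be a scalar $aI$ (otherwise $\bar A$ would be semisimple of order dividing $p-1$). Thus, up to conjugation,
\[
\bar A = \begin{pmatrix} a & 1 \\ 0 & a \end{pmatrix},
\]
where $a\in\FF_p^\times$ has order $p-1$ (so that $\mathrm{ord}(\bar A)=p\cdot(p-1)$).

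Finally, $\det(\bar A)=a^2$ is a square in $\FF_p^\times$, and since $\det(A_n)\equiv\det(\bar A)\pmod p$, we conclude that $\det(A_n)$ is a square modulo $p$. The only nontrivial step is the classification of element orders in $\GL(2,\FF_p)$, which is a short standard argument via Jordan decomposition; everything else is a direct consequence of the already-established Lemma \ref{lem-ker_orders}.
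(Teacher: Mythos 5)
Your proof is correct. The underlying idea is the same as the paper's --- the reduction $\bar A$ of $A_n$ mod $p$ must decompose as a nontrivial unipotent part commuting with a semisimple part, which forces the semisimple part to be scalar and hence $\det(\bar A)$ to be a square --- but you reach this via a different and somewhat cleaner route. You reduce immediately to $\GL(2,\FF_p)$ and invoke the Jordan decomposition together with the classification of element orders in $\GL(2,\FF_p)$ (divisors of $p(p-1)$ or of $p^2-1$), whereas the paper works over $\ZZ/p^n\ZZ$: it first applies Lemma~\ref{lem-max_prime_powers} to see that $p\nmid k$ in $\operatorname{ord}(A_n)=p^n(p-1)k$, then builds an explicit Bézout decomposition $A_n=B_nC_n$ into commuting $p$-power and prime-to-$p$ parts, and only then reduces both factors mod $p$ and computes matrix entries by hand. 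Your version avoids Lemma~\ref{lem-max_prime_powers} entirely; in place of its use you need the step $\operatorname{ord}(A_n)\mid Mp^{n-1}$ (from Lemma~\ref{lem-ker_orders} applied to $A_n^M$) combined with the hypothesis to get $p(p-1)\mid M$, which is correct. The paper's argument is more elementary and self-contained (no appeal to Jordan decomposition or order classification in $\GL(2,\FF_p)$), while yours is shorter and more conceptual; both hinge on Lemma~\ref{lem-ker_orders} in essentially the same way. One small point worth making explicit in your write-up is the justification that any non-semisimple element of $\GL(2,\FF_p)$ has its repeated eigenvalue lying in $\FF_p$, hence $\bar A_s$ is an $\FF_p$-scalar; you gesture at this with the centralizer remark, and it is indeed standard, but spelling it out would make the argument fully self-contained.
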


\begin{proof}
We start by writing $\operatorname{ord}(A_n)=p^{n}(p-1)k$, for some $k\geq 1$. 
From Lemma \ref{lem-max_prime_powers}, we know that are no elements of order $p^{n+1}$ in $\GL(2,\ZZ/p^n\ZZ)$ and so $p$ cannot divide $k$. 
Thus $\gcd(p^n,(p-1)k)=1$ and there exist integers $x$ and $y$ such that $1=x(p-1)k + yp^n$. Letting $B_n=A_n^{x(p-1)k}$ and $C_n=A_n^{yp^n}$, we have that $B_n$ and $C_n$ have order $p^n$ and $(p-1)k$, respectively, and $A_n=A_n^{x(p-1)k+yp^n}= A_n^{x(p-1)k}A_n^{yp^n}=B_nC_n$. 
Moreover $A_n = B_n C_n = C_n B_n$, since $B_n$ and $C_n$ are both powers of $A_n$.

Next, let $\pi_{p,n}$ be as in the statement of Lemma \ref{lem-ker_orders}, $B_1 = \pi_{p,n}(B_n)$ and $C_1 = \pi_{p,n}(C_n)$. Since the $\# \GL(2,\ZZ/p\ZZ) = p(p-1)^2(p+1)$ and the order of $B_n$ is $p^n$, we know that $\operatorname{ord}(B_1)$ divides $p$. However, any element in $\GL(2,\ZZ/p\ZZ)$ of order dividing $p$ is conjugate to a matrix in the subgroup 
$$H = \left\{\begin{pmatrix}
    1&x\\0&1
\end{pmatrix}: x \in \ZZ/p\ZZ\right\}.$$
Since conjugation doesn't change the determinant of $A_n$, we can assume without loss of generality that $B_1$ is in $H$. Since $B_n$ has order $p^n$, Lemma \ref{lem-ker_orders} gives that $B_n$ does \emph{not} belong to $\Ker(\pi_{p,n})$, and therefore $B_1$ is \emph{not} the identity element of $\GL(2,\ZZ/p\ZZ)$.

Thus, $B_1 = \left(\begin{smallmatrix}
1&a\\0&1     
 \end{smallmatrix}\right)$ for some fixed $a\in(\ZZ/p\ZZ)^\times$ and since $B_nC_n = C_n B_n$ and $\pi_n$ is a group homomorphism, we know that $B_1C_1 = C_1B_1$. Setting $C_1 = \left(\begin{smallmatrix}
     \alpha&\beta\\\gamma&\delta
 \end{smallmatrix} \right)$ and using that $B_1$ and $C_1$ commute we have that
\begin{align*}
    \alpha &\equiv \alpha+\gamma a \bmod p,\\
\beta+\alpha a &\equiv \beta+\delta a \bmod p.    
\end{align*}
Since $a\not\equiv0 \bmod p$, it must be that $\gamma \equiv 0 $ and $\alpha \equiv \delta\bmod p$. Thus, $C_1 = \left(\begin{smallmatrix}
    \alpha&\beta \\ 0 &\alpha
\end{smallmatrix} \right)$. Finally, we have that $\det(A_1)=\det(B_1)\det(C_1)\equiv \alpha^2$ and $\det(A_n)\equiv \det(A_1)\equiv \alpha^2 \bmod p$, as desired. 
\end{proof}

We are now ready to prove Proposition \ref{prop-pnroots}.

\begin{proof}[Proof of Prop. \ref{prop-pnroots}]
    Suppose towards a contradiction that there is an elliptic curve $E/\QQ$, a prime $p\geq 3$, and $n\in\NN$ such that $\QQ(\mu_{p^{n+1}})\subseteq \QQ(E[p^n])$. 
    Let $\zeta_{p^{n+1}}$ be a fixed primitive $p^{n+1}$-th root of unity, $\zeta_{p^n} = \zeta_{p^{n+1}}^p$ be the corresponding primitive $p^n$-th root of unity and let $\sigma\in G_\QQ$ be any element such that $\sigma\big|_{\QQ(\zeta_{p^{n+1}})}$ generates $\Gal(\QQ(\zeta_{p^{n+1}})/\QQ)$. Further, let $A_{n} = \rho_{E,p^{n}}(\sigma)$.

 By Corollary \ref{cor-order_p(p-1)k}, if $\QQ(\zeta_{p^{n+1}})\subseteq \QQ(E[p^n])$, then the order of $A_n=\rho_{E,p^n}(\sigma)\in \GL(2,\ZZ/p^n\ZZ)$ is divisible by $\varphi(p^{n+1})$. However, recall that we assumed that $\sigma$ restricted to $\QQ(\zeta_{p^{n+1}})$ generates the full group Galois group of $\QQ(\zeta_{p^{n+1}})/\QQ$ and so $\sigma$ restricted to $\QQ(\zeta_{p^n})$ generates $\Gal(\QQ(\zeta_{p^n})/\QQ)$. Therefore, $\det(\rho_{E,p^n}(\sigma))=\det(A_n)$ must be a generator of $(\ZZ/p^n\ZZ)^\times$. But from Lemma \ref{lem:det_is_sq}, we know that $\det(A_n)$ is a square mod $p$. Thus, we have reached a contradiction as a quadratic residue cannot be a generator for $p\geq 3$, and the proof is complete. 
\end{proof}


\subsection{The case $p=2$, without CM}\label{sec:even_vert}

In this subsection we continue the proof of Theorem \ref{thm:main_vertical} by studying when $\Q(E[2^{n+1}])=\Q(E[2^n])$ for some $n\geq 1$. We shall consider two cases, according to whether $E/\Q$ has complex multiplication. In this section, we consider the non-CM case.

In the non-CM case, the work of Rouse and Zureick-Brown (Theorem \ref{thm-rzb}) reduces the  proof of Theorem \ref{thm:main_vertical} in the case of $p=2$ to a finite computation. Indeed, by Theorem \ref{thm-rzb}, if $E/\QQ$ is an elliptic curve with no CM, there are $1208$ possible images for $\rho_{E,2^\infty}$, each one defined at most modulo $32$ (i.e., $\Im \rho_{E,2^\infty}$ is always the full inverse image of $\Im \rho_{E,2^5}$ under the reduction map $\GL(2,\ZZ_2)\to \GL(2,\ZZ/32\ZZ)$). An immediate consequence of this fact is that if $n\geq 5$, then we cannot have $\Im {\rho_{E,2^{n+1}}}\simeq \Im \rho_{E,2^n}$, and therefore $\Gal(\QQ(E[2^{n+1}])/\QQ)$ is not isomorphic to $\Gal(\QQ(E[2^n])/\QQ)$. In particular, if $\QQ(E[2^{n+1}])=\QQ(E[2^n])$ we must have $n<5$. The database \cite{RZB} provides generators of a subgroup $G_5\subseteq \GL(2,\ZZ/32\ZZ)$ for each of the $1208$ possible images. If we let $G_k$ be the image of $G_5$ in $\GL(2,\ZZ/2^k\ZZ)$, for $1\leq k \leq 5$, then we have carried out an exhaustive search of the possible $2$-adic images for examples where $G_{n+1}\simeq G_n$, for some $1\leq n\leq 4$, so that $\QQ(E[2^{n+1}])=\QQ(E[2^n])$. There are precisely two types of images with this property, namely $\tt X_{20b}$ and $\tt X_{60d}$ (in the notation of \cite{RZB}) and in both cases we had $G_2\simeq G_1$, i.e., $\QQ(E[4])=\QQ(E[2])$. They differ, however, in the fact that $G_2\simeq \ZZ/2\ZZ$ for $\tt X_{20b}$ while $G_2\simeq S_3$ for $\tt X_{60d}$. Our search, thus, yields the following result.

\begin{prop}\label{prop:even_vert}
Let $E/\QQ$ be an elliptic curve without CM,  such that $\QQ(E[2^{n+1}]) = \QQ(E[2^n])$ for some $n\geq 1$. Then $n=1$, and there is a $t\in\QQ$ such that $E$ is $\QQ$-isomorphic over $\QQ$ to an elliptic curve of the form $E'\colon y^2 = x^3 + A(t)x + B(t)$, where 
\begin{align*}
A(t) &= -27t^8+648t^7-4212t^6-2376t^5+60102t^4+79704t^3-105732t^2-235224t-107811,\\
B(t) &= 54t^{12}-1944t^{11}+24300t^{10}-97848t^9-251262t^8+1722384t^7+4821768t^6\\&\ \ \ -8697456t^5-64323558t^4-140447736t^3-157012020t^2-90561240t-21346578.
\end{align*}
\end{prop}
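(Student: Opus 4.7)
The plan is to invoke Theorem \ref{thm-rzb} (Rouse--Zureick-Brown) to reduce the statement to a finite, computer-verifiable assertion. Since every $2$-adic image $\Im(\rho_{E,2^\infty})$ for a non-CM curve $E/\QQ$ is defined at most modulo $32$, once $k \geq 5$ the reduction $\Im(\rho_{E,2^{k+1}}) \to \Im(\rho_{E,2^k})$ is a surjection whose kernel is the full kernel of $\GL(2,\ZZ/2^{k+1}\ZZ) \to \GL(2,\ZZ/2^k\ZZ)$, which has order $16$. Thus $|\Im(\rho_{E,2^{k+1}})| = 16 \cdot |\Im(\rho_{E,2^k})|$ for $k \geq 5$, so $\QQ(E[2^{k+1}])$ cannot equal $\QQ(E[2^k])$ in that range. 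Any putative coincidence therefore satisfies $n \leq 4$.

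For the finite check, I would loop over the $1208$ subgroups $G_5 \subseteq \GL(2,\ZZ/32\ZZ)$ tabulated in \cite{RZB}, compute the reductions $G_k := \pi_k(G_5) \subseteq \GL(2,\ZZ/2^k\ZZ)$ for $k = 1,2,3,4,5$, and test whether the natural surjection $G_{n+1} \to G_n$ is an isomorphism for some $1 \leq n \leq 4$ (equivalently, whether $|G_{n+1}| = |G_n|$, using that $\QQ(E[2^n]) \subseteq \QQ(E[2^{n+1}])$ always holds). This exhaustive search identifies exactly two subgroups, labelled $\mathtt{X_{20b}}$ and $\mathtt{X_{60d}}$ in the RZB notation, and in each case the coincidence occurs only at $n = 1$, i.e., $\QQ(E[4]) = \QQ(E[2])$. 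This yields the claim that $n = 1$.

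To produce the explicit family, I would use the rational parametrization of the modular curve $X_{20b}$ together with the universal Weierstrass model recorded in the RZB database to extract the polynomials $A(t), B(t) \in \ZZ[t]$ displayed in the statement. Since $\mathtt{X_{60d}}$ parametrizes a proper sublocus of $\mathtt{X_{20b}}$ (as asserted in the introduction and verified by checking, via Magma, the conjugacy inclusion of the corresponding subgroups in $\GL(2,\ZZ/32\ZZ)$), every non-CM elliptic curve with the desired coincidence has $j$-invariant in the image of the $\mathtt{X_{20b}}$-parametrization; a quadratic twist argument then places every such curve in the displayed family $E_t$.

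The main obstacle here is essentially computational hygiene rather than any conceptual difficulty: one must correctly implement the reduction and conjugation routines for all $1208$ subgroups, verify in each candidate that the isomorphism $G_{n+1}\simeq G_n$ is induced by the reduction map (and not merely an abstract isomorphism), and then extract a clean Weierstrass parametrization from the modular curve $\mathtt{X_{20b}}$. The conceptual content is carried entirely by Theorem \ref{thm-rzb}, which makes the search space finite and well-understood.
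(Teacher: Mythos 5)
Your proposal follows the paper's proof essentially step for step: invoke Theorem \ref{thm-rzb} to bound $n\leq 4$ (since $2$-adic images are full preimages of their mod-$32$ reductions), then run a finite search over the $1208$ Rouse--Zureick-Brown subgroups looking for $|G_{n+1}|=|G_n|$, finding only $\mathtt{X_{20b}}$ and $\mathtt{X_{60d}}$ (both with $n=1$), and finally read off the universal Weierstrass family over $\mathtt{X_{20b}}$, which also covers $\mathtt{X_{60d}}$ as a sublocus. This is correct and is the same argument the paper gives.
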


\begin{remark}\label{rem-groupG}
We point out here that, in fact, the family in Proposition \ref{prop:even_vert} contains the family in Theorem \ref{thm:abelian}. The family in Theorem \ref{thm:abelian} corresponds to elliptic curves with $\QQ(E[2]) = \QQ(E[4]) = \QQ(i)$, while the family above corresponds to elliptic curves with $\QQ(E[2]) = \QQ(E[4])$ an $S_3$ extension of $\QQ$, with $\QQ(i)$ the unique quadratic subfield. These two curves correspond to $\tt X_{20b}$ and $\tt X_{60d}$, in the notation of \cite{RZB}, and the map between these curves can be computed from the information there. 

The curves in Proposition \ref{prop:even_vert} all have $\Im\rho_{E,4}$ conjugate to a subgroup of
$$G = \left\langle\begin{pmatrix} 1&0\\ 3&3 \end{pmatrix}, \begin{pmatrix} 3&3\\ 1&0 \end{pmatrix}  \right\rangle\subseteq \GL(2,\Z/4\Z),$$
while the curves in Theorem \ref{thm:abelian} have $\Im\rho_{E,4}$ conjugate to\footnote{The image must actually be \emph{equal} to $H$ since $\#H = 2$ and $\QQ(i)\subseteq \QQ(E[4])$.} 
$$H =  \left\langle\begin{pmatrix} 1&1\\ 0&3 \end{pmatrix}  \right\rangle.$$
Since $H$ is conjugate to a subgroup of $G$, curves with images in $H$ arise as points on the modular curve $X_G$.

\end{remark}

\begin{example}
   Let $E/\QQ$ be the elliptic curve with Cremona label \href{https://www.lmfdb.org/EllipticCurve/Q/162d1/}{\texttt{162d1}} which is given by Weierstrass equation $y^2 + xy + y = x^3 - x^2 + 4x - 1$. Using Magma \cite{Magma} we see that, $E(\QQ)_{\tor}$ is trivial and that $\QQ(E[2]) = \QQ(E[4]) = \QQ(\alpha)$ where $\alpha$ is a root of $f(x) = 256x^6 + 6624x^4 + 42849x^2 + 82944$. 
\end{example}


\subsection{The case $p=2$, with CM}\label{sec:even_vert_CM}

In this subsection we complete the proof of Theorem \ref{thm:main_vertical} by studying when $\Q(E[2^{n+1}])=\Q(E[2^n])$ for some $n\geq 1$, and $E/\Q$ has complex multiplication. 

\begin{prop}\label{prop-CMvert}
	Let $E/\Q$ be an elliptic curve with complex multiplication, and let $n\geq 1$. Then $\Q(E[2^n])\subsetneq \Q(E[2^{n+1}])$.
\end{prop}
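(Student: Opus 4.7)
The plan is to leverage the explicit classification of $2$-adic Galois representations attached to CM elliptic curves over $\Q$ from \cite{lozano1}. Since there are only thirteen CM $j$-invariants over $\Q$, the proof reduces to a finite check across these $j$-invariants and their quadratic twists. For each such $E/\Q$, the image $G = \Im \rho_{E, 2^\infty}$ is contained in (and has finite index in) the normalizer $N(C)$ of a Cartan subgroup $C \subset \GL(2,\Z_2)$ isomorphic to $(\mathcal{O}_K \otimes \Z_2)^\times$, where $\mathcal{O}_K$ is the ring of integers of the CM field.

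First I would establish a uniform large-$n$ statement. Since $\mathcal{O}_K \otimes \Z_2$ is a free $\Z_2$-module of rank $2$, the reduction $N(C) \bmod 2^{n+1} \twoheadrightarrow N(C) \bmod 2^n$ has kernel of order exactly $4$ for every $n \geq 1$ (the kernel being $(1+2^n(\mathcal{O}_K\otimes\Z_2))/(1+2^{n+1}(\mathcal{O}_K\otimes\Z_2))$, which lies entirely inside $C$ because elements close to $I$ cannot lie in the non-trivial coset of $N(C)/C$). Since $G$ has finite index in $N(C)$, for all sufficiently large $n$ the reduction maps $G \bmod 2^{n+1} \twoheadrightarrow G \bmod 2^n$ inherit this kernel of order $4$, forcing $\Q(E[2^n]) \subsetneq \Q(E[2^{n+1}])$ at every such level.

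For the finitely many small values of $n$ not covered by this uniform argument, I would appeal to the explicit images tabulated in \cite{lozano1}: for each of the thirteen CM $j$-invariants and each quadratic twist, the image $\Im \rho_{E,2^n}$ can be read off directly and compared to $\Im \rho_{E,2^{n+1}}$. The verification that the former is a proper subgroup of the latter at each $n \geq 1$ is then a routine case-by-case check, made finite by the fact that the $2$-adic image is always specified at a bounded level.

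The main obstacle is the bookkeeping required across the three distinct local behaviors---$2$ split, inert, or ramified in $K$---since the Cartan $C$ has a different shape in each case, and the argument via \cite{RZB} used in the non-CM part of Theorem \ref{thm:main_vertical} does not apply here. However, the key structural property exploited above---the rank-$2$ freeness of $\mathcal{O}_K \otimes \Z_2$ over $\Z_2$---is insensitive to the splitting behavior of $2$, so the kernel-of-reduction computation proceeds uniformly in all three cases.
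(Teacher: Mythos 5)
Your plan is sound and reaches the conclusion by a genuinely different route from the paper. You establish a uniform large-$n$ argument: since $\mathcal{O}\otimes\Z_2$ is $\Z_2$-free of rank $2$, the reduction kernel $N(C)\bmod 2^{n+1}\to N(C)\bmod 2^n$ has order exactly $4$ at every level $n\geq 1$ and lies inside the Cartan, and openness of $\Im\rho_{E,2^\infty}$ in $N(C)$ then forces the same kernel size for the image once $n$ is sufficiently large; the finitely many remaining levels you propose to settle by consulting the explicit $2$-adic CM image tables of \cite{lozano1} across all thirteen rational CM $j$-invariants and their quadratic twists. The paper instead splits by $n$. For $n=1$ it uses the mod-$4$ classification of \cite{RZB}: the constraint $\#G_4=\#G_2$ forces $\Im\rho_{E,4}$ into the group parametrizing the genus-$0$ modular curve $\mathtt{X_{20b}}$, and one verifies none of the thirteen CM $j$-invariants lies on the associated $j$-line. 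For $n\geq 2$ it avoids any traversal of the $j$-invariants altogether: the Weber-function index formula of Theorem 4.4 of \cite{lozano1} forces $[K(E[2^{n+1}]):K(E[2^n])]$ to be divisible by $4$ unless $\#\mathcal{O}_{K,f}^\times=4$, which narrows the remaining explicit check to $j=1728$ alone. The trade-off: the paper's route requires the ramification/Kummer machinery but delivers a tiny finite verification, whereas yours substitutes more image-table bookkeeping for that machinery. Two points you should tighten before calling this a proof: (i) produce an explicit $n_0$ from the tables of \cite{lozano1} beyond which openness takes over, so the ``finitely many small levels'' is actually pinned down rather than asserted; and (ii) note that the Cartan must be taken with respect to the CM \emph{order} $\mathcal{O}_{K,f}$, not the maximal order $\mathcal{O}_K$, since several rational CM $j$-invariants come from non-maximal orders --- this does not affect your rank-$2$ freeness observation but does determine which entries of the tables in \cite{lozano1} you consult.
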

\begin{proof} 
Suppose first that $E/\Q$ is an elliptic curve with complex multiplication, and $\Q(E[2])=\Q(E[4])$. Then, the image $G_4$ of $\rho_{E,4}$ is a group such that its reduction $G_2$ modulo $2$ satisfies $\#G_2 = \# G_4$. A Magma computation shows that any such group is a conjugate of a subgroup of 
$$G = \left\langle\begin{pmatrix} 1&0\\ 3&3 \end{pmatrix}, \begin{pmatrix} 3&3\\ 1&0 \end{pmatrix}  \right\rangle\subseteq \GL(2,\Z/4\Z),$$ 
the group that already appeared in Remark \ref{rem-groupG}. The modular curve $X_G$ corresponds to $X_{20b}$ (genus $0$) in the notation of Rouse and Zureick-Brown, and they have computed the $j$-line $j\colon X_G \to \mathbb{P}^1$, which is given as follows: 
$$j_G(t) = \frac{-4t^8 + 32t^7 + 80t^6 - 288t^5 - 504t^4 + 864t^3 + 1296t^2 - 864t - 1188}{t^4 + 4t^3 + 6t^2 + 4t + 1}.$$
In order to rule out elliptic curves with CM that have an image of $\rho_{E,4}$ that is a conjugate of a subgroup of $G$, it suffices to show that if $j_0$ is a rational CM $j$-invariant, then $j_0$ is not a value of $j_G(t)$ for some rational value of $t$. There are precisely $13$ such rational CM $j$-invariants, namely
$0$, $54000$, $-12288000$, $1728$, $287496$, $-3375$, $16581375$, $8000$, $-32768$, $-884736$,
$-884736000$, $-147197952000$, $-262537412640768000$, and one can check, one by one (again using Magma, for example) that $j_G(t)=j_0$ is impossible for rational values of $t$. Hence, $\Q(E[2])=\Q(E[4])$ is impossible for elliptic curves over $\Q$ with CM. 

It remains to show that $\Q(E[2^n])=\Q(E[2^{n+1}])$ is impossible for $n>1$ in the CM case. Suppose, for a contradiction, that $E/\Q$ is an elliptic curve with CM by an imaginary quadratic field $K$ and suppose that $\Q(E[2^n])=\Q(E[2^{n+1}])$ for some $n>1$. In particular, since $j(E)\in\Q$, we have that $K(j(E))=K$, and $K(E[2^n])=K(E[2^{n+1}])$. 
In this proof, we will argue in terms of the following diagram:
$$\xymatrix{
	& K(E[2^{n+1}]) & \\
	K(E[2^n]) \ar@{-}[ur]^a&  & K(h(E[2^{n+1}])) \ar@{-}[ul]_b \\
	& K(h(E[2^n])) \ar@{-}[ur]_d \ar@{-}[ul]^c &  \\
	K \ar@{-}[ur] & & }$$
where $h$ is a Weber function for $E$ (see \cite{lozano1}, Definition 2.4). Our initial assumption is that $a=1$. Theorem 4.4 of \cite{lozano1} shows that, for any $n\geq 2$, we have $d=[H_f(h(E[2^{n+1}])):H_f(h(E[2^n]))] =2^2$, where $H_f=K(j(E))=K$ in this case. Hence, $c=[K(E[2^n]):K(h(E[2^n]))]$ must be divisible by $4$. Theorem 4.1 of \cite{lozano1} shows that $c$ is a divisor of $\#\mathcal{O}_{K,f}^\times$, where $E$ has CM by the order $\mathcal{O}_{K,f}$ of $K$. Since $\# \mathcal{O}_{K,f}^\times$ is always a divisor of $4$ or $6$, we must have $\# \mathcal{O}_{K,f}^\times =4$ and therefore $\mathcal{O}_{K,f}=\Z[i]$ and $K=\Q(i)$, and $j(E)=1728$. The finite list of possible $2$-adic images for elliptic curves over $\Q$ with CM and $j=1728$ are described in Theorem 1.7 of \cite{lozano1}, and one can verify that, in all cases, $[\Q(E[8]):\Q(E[4])]=2$ or $4$, and $[\Q(E[2^{n+1}]):\Q(E[2^n])]=4$ for all $n\geq 3$. Hence, this shows that $\Q(E[2^n])\neq \Q(E[2^{n+1}])$ for any $n\geq 1$ and any CM elliptic curve over $\Q$.
\end{proof} 

Together, Propositions \ref{prop-p_geq_3}, \ref{prop:even_vert}, and \ref{prop-CMvert} imply Theorem \ref{thm:main_vertical}.

We finish this section discussing the possibility of $2^{n+1}$-th roots of unity in the $2^n$-th division field of an elliptic curve. For example, the elliptic curve $E: y^2=x^3+x$ satisfies $\Q(E[2])=\Q(\zeta_4)$, and the curve $E: y^2=x^3+2x$ satisfies $\Q(\zeta_8)\subseteq \Q(E[4])$. Next, we show that the elliptic curve $y^2=x^3-11x-14$,  with Cremona label \href{https://www.lmfdb.org/EllipticCurve/Q/32a3/}{\texttt{32a3}}, satisfies that the $2^n$-th division field contains the $2^{n+1}$-roots of unity.
	
\begin{theorem}\label{thm-2nroots}
		Let $E/\Q$ be the elliptic curve $y^2=x^3-11x-14$ (i.e. the curve with Cremona label \href{https://www.lmfdb.org/EllipticCurve/Q/32a3/}{\texttt{32a3}}). Then, $\Q(\zeta_{2^{n+1}})\subseteq \Q(E[2^n])$ for all $n>1$.
\end{theorem}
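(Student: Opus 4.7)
Plan: The curve $E\colon y^2 = x^3 - 11x - 14$ has $j$-invariant $287496$, hence complex multiplication by $\mathcal{O}_K = \Z[\alpha]$ where $\alpha = \sqrt{-2}$ and $K = \Q(\sqrt{-2})$; in $\mathcal{O}_K$ the prime $2$ ramifies as $(2) = (\alpha)^2$. I plan to prove the statement by induction on $n$, with a direct verification of the base case $n=2$ and a CM-theoretic argument for the inductive step that exploits the Cartan structure of $\rho_{E,2^\infty}|_{G_K}$.

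For the base case, factoring the $2$-division polynomial yields $x^3 - 11x - 14 = (x+2)(x^2 - 2x - 7)$, whose quadratic factor has roots $1 \pm 2\sqrt{2}$, so $\Q(E[2]) = \Q(\sqrt{2})$. By Proposition~\ref{prop-cyclotomic}, $\Q(\zeta_4) = \Q(i) \subseteq \Q(E[4])$, and since $\Q(E[2]) \subseteq \Q(E[4])$ we obtain $\Q(\zeta_8) = \Q(i, \sqrt{2}) \subseteq \Q(E[4])$. Note also that $\sqrt{-2} = i\sqrt{2} \in \Q(\zeta_8)$, so $K \subseteq \Q(E[4])$, and therefore $K \subseteq \Q(E[2^n])$ for all $n \geq 2$.

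For the inductive step, suppose $n \geq 2$ and $\Q(\zeta_{2^{n+1}}) \subseteq \Q(E[2^n])$, and take any $\sigma \in G_{\Q(E[2^{n+1}])}$. Since $K \subseteq \Q(E[2^n]) \subseteq \Q(E[2^{n+1}])$, we have $\sigma \in G_K$. By the standard theory of CM elliptic curves (cf.\ \cite{lozano1} and references therein), $T_2(E)$ is a free rank-$1$ $\Z_2[\alpha]$-module, and a choice of basis adapted to this structure realizes $\rho_{E,2^\infty}|_{G_K}$ inside the Cartan subgroup
$$C = \left\{\begin{pmatrix} a & -2b \\ b & a \end{pmatrix} : a + b\alpha \in \Z_2[\alpha]^\times \right\} \subseteq \GL_2(\Z_2),$$
obtained by writing multiplication by $a + b\alpha$ on the basis $\{1,\alpha\}$. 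Thus $\rho_{E,2^{n+2}}(\sigma)$ has this matrix form mod $2^{n+2}$, and because $\sigma$ fixes $E[2^{n+1}]$, we must have $a \equiv 1 \pmod{2^{n+1}}$ and $b \equiv 0 \pmod{2^{n+1}}$. Writing $a = 1 + 2^{n+1}a'$ and $b = 2^{n+1}b'$ with $a', b' \in \Z_2$, a direct computation gives
$$\det\rho_{E,2^{n+2}}(\sigma) = a^2 + 2b^2 = 1 + 2^{n+2}a' + 2^{2n+2}(a')^2 + 2^{2n+3}(b')^2 \equiv 1 \pmod{2^{n+2}},$$
since $2n+2 \geq n+2$ for $n \geq 0$. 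By Proposition~\ref{prop-cyclotomic}, $\chi_{2^{n+2}}(\sigma) = \det\rho_{E,2^{n+2}}(\sigma) \equiv 1 \pmod{2^{n+2}}$, so $\sigma$ fixes $\zeta_{2^{n+2}}$, which yields $\Q(\zeta_{2^{n+2}}) \subseteq \Q(E[2^{n+1}])$ and completes the induction.

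The main technical input is the well-known CM fact that $\rho_{E,2^\infty}|_{G_K}$ lies inside a Cartan of the indicated form; once that is in hand the inductive step reduces to the short determinant computation above. A purely computational alternative would be to invoke the classification of $2$-adic images for CM curves with $j = 287496$ worked out in \cite{lozano1}, read off $\rho_{E,2^n}(G_\Q)$ mod any desired power of $2$, and verify the cyclotomic containment directly from the generators.
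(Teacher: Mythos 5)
Your proposal takes a genuinely different route from the paper's. The paper works directly with the explicit $2$-adic image $G=\langle A,B,C\rangle\subseteq\GL(2,\Z_2)$ taken from \cite{lozano1}, Example 9.4, extracts an abelian quotient $G/\langle D\rangle\simeq\Z/2\Z\times\Z/2^{n-1}\Z$ modulo $2^n$, and then uses the fact that $\Q(E[2^n])/\Q$ is ramified only at $2$ together with Kronecker--Weber to pin down the corresponding abelian subfield as $\Q(\zeta_{2^{n+1}})$. You instead exploit the CM structure: once $K\subseteq\Q(E[4])$ is established, any $\sigma$ fixing $E[2^{n+1}]$ lands in a Cartan subgroup where the determinant has the form $a^2 + db^2$, and a short congruence forces $\det\equiv 1\bmod 2^{n+2}$. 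Both routes are legitimate and both lean on the CM nature of the curve; yours is more conceptual while the paper's is more computational and self-contained given the database \cite{lozano1}.

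However, you have misidentified the CM order. The $j$-invariant you compute, $j(E)=287496=66^3$, is the singular modulus $j(2i)$ attached to the order of discriminant $-16$, namely $\Z[2i]$ of conductor $2$ inside $\mathcal{O}_K=\Z[i]$ with $K=\Q(i)$. It is \emph{not} $j(\sqrt{-2})=8000$, which belongs to the maximal order $\Z[\sqrt{-2}]$ of $\Q(\sqrt{-2})$. So the CM field is $\Q(i)$, not $\Q(\sqrt{-2})$, and the Cartan subgroup consists of matrices of the form $aI+bN$ with $N^2=-4I$, giving $\det=a^2+4b^2$ rather than $a^2+2b^2$. (You can see this directly from the image in \cite{lozano1}: the generator $C=\left(\begin{smallmatrix}-1&-1\\4&-1\end{smallmatrix}\right)$ is $-I+N$ with $N=\left(\begin{smallmatrix}0&-1\\4&0\end{smallmatrix}\right)$ and $N^2=-4I$.) Your congruence computation happens to give the same answer, since for $a\equiv 1$, $b\equiv 0\bmod 2^{n+1}$ we still get $a^2+4b^2\equiv 1\bmod 2^{n+2}$, so the conclusion survives the correction.

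There is a second, more substantive gap: the ``standard theory'' you invoke — that $T_2(E)$ is a free rank-$1$ module over the $2$-adic completion of the endomorphism order and that $\rho_{E,2^\infty}(G_K)$ therefore sits inside the corresponding Cartan — is delicate here precisely because the prime $p=2$ \emph{divides the conductor} of the order $\Z[2i]$. The routine argument that the $\ell$-adic image of $G_K$ is automatically Cartan applies cleanly only when $\ell$ is coprime to the conductor of $\mathcal{O}=\operatorname{End}(E)$; for $\ell$ dividing the conductor one needs the more refined analysis in \cite{lozano1} (which is exactly where the paper gets its explicit description of the image). So your ``alternative'' final paragraph is really not optional: the Cartan containment you use as the ``main technical input'' should be justified by citing the specific structure of $\rho_{E,2^\infty}(G_\Q)$ from \cite{lozano1} rather than generic CM theory. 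With that citation supplied and the CM order corrected, your argument is valid.
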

\begin{proof} Let $E/\Q$ be the elliptic curve $y^2=x^3-11x-14$. In \cite{lozano1}, Example 9.4, it is shown that the $2$-adic image of $E/\Q$ is given by 
	$$G=\left\langle A=\begin{pmatrix} -1&0\\ 0&1 \end{pmatrix}, B=\begin{pmatrix} 5&0\\ 0&5 \end{pmatrix},C=\begin{pmatrix} -1&-1\\ 4&-1 \end{pmatrix} \right\rangle \subseteq \GL(2,\Z_2).$$
	Note that the generators of $G$ are subject to the following relations:
	$$BC=CB,\ AB=BA,\ ACA=BC^{-1}.$$
	Further, $B^{2^{n-2}}\equiv C^{2^n}\equiv \operatorname{Id} \bmod 2^n$. It follows that $D=ACAC^{-1}=BC^{-2}$ satisfies
	$$D^{2^{n-1}} \equiv B^{2^{n-1}}C^{-2^n}\equiv 1 \bmod 2^n.$$
	Moreover, $G/\langle D \rangle $ is abelian. It follows that $G/\langle D \rangle \bmod 2^n$ is abelian, of size 
	$$\frac{|G \bmod 2^n|}{|\langle D \rangle \bmod 2^n|} = \frac{2\cdot 2^{n-2}\cdot 2^n}{2^{n-1}} = 2^n.$$
	Moreover, $D^{2^{n-2}}\equiv B^{2^{n-2}}C^{-2^{n-1}}\equiv C^{-2^{n-1}} \bmod 2^n$. Thus, $C^{2^{n-1}}\equiv \operatorname{Id} \bmod (2^n,\langle D\rangle)$. It follows that $\langle A,C\rangle/\langle D\rangle$ is of size $2^n$, and therefore $A$ and $C$ span all of $G/\langle D\rangle$. Hence, $G/\langle D\rangle \simeq \Z/2\Z\times \Z/2^{n-1}\Z$. We conclude that there is an abelian extension $F_n/\Q$, with $F_n\subseteq \Q(E[2^n])$, such that its Galois group is given by  $\Gal(F_n/\Q)\simeq  \Z/2\Z\times \Z/2^{n-1}\Z$. 
	
	Now, notice that the conductor of $E/\Q$ is $32$, and therefore, by the criterion of N\'eron--Ogg--Shafarevich, the extension $\Q(E[2^n])/\Q$ (and therefore $F_n/\Q$) is only ramified above $2$. In particular, $F_n/\Q$ is an abelian extension of $\Q$ that is only ramified above $2$, and we conclude that $F_n\subseteq \Q(\zeta_{2^\infty})$. Further, $\Gal(\Q(\zeta_{2^\infty})/\Q)\simeq \Z/2\Z \times \Z_2$, and $n>1$, and so there exists a unique extension $F_n\subseteq \Q(\zeta_{2^\infty})$ such that $\Gal(F_n/\Q)\simeq \Z/2\Z\times \Z/2^{n-1}\Z$, namely $F_n = \Q(\zeta_{2^{n+1}})$. 
	
	Hence, $F_n = \Q(\zeta_{2^{n+1}})\subseteq \Q(E[2^n])$ for $n>1$, as we claimed.
\end{proof}

\section{The Abelian Case}\label{sec-abelian}

In this section we prove Theorem \ref{thm:abelian}. We shall rely on the classification of abelian division fields given in \cite{LR+GJ}. We cite here the main result of that paper for reference:

\begin{thm}\label{thm:LR+GJ} {\rm \cite[Theorem 1.1]{LR+GJ}} Let $E/\Q$ be an elliptic curve. If there is an integer $n\geq 2$  such that $\Q(E[n])=\Q(\zeta_n)$, then $n=2,3,4,$ or $5$. More generally, if $\Q(E[n])/\Q$ is abelian, then  $n=2,3,4,5,6$, or $8$.  Moreover, $\Gal(\Q(E[n])/\Q)$ is isomorphic to one of the following groups:
	\begin{center}
		\renewcommand{\arraystretch}{1.2}
		\begin{tabular}{|c||c|c|c|c|c|c|}
			\hline
			$n$ & $2$ & $3$ & $4$ & $5$ & $6$ & $8$\\
			\hline 
			\multirow{4}{*}{$\Gal(\Q(E[n])/\Q)$}  & $\{0\}$ & $\Z/2\Z$ & $\Z/2\Z$ & $\Z/4\Z$ & $(\Z/2\Z)^2$ & $(\Z/2\Z)^4$ \\
			& $\Z/2\Z$ & $(\Z/2\Z)^2$ & $(\Z/2\Z)^2$ & $\Z/2\Z\times \Z/4\Z$ & $(\Z/2\Z)^3$ & $(\Z/2\Z)^5$ \\
			& $\Z/3\Z$ &  & $(\Z/2\Z)^3$ & $(\Z/4\Z)^2$ & & $(\Z/2\Z)^6$\\
			&  &  & $(\Z/2\Z)^4$ & & & \\
			\hline
		\end{tabular}
	\end{center}
	Furthermore, each possible Galois group occurs for infinitely many distinct $j$-invariants.
\end{thm}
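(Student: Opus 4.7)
The plan is to classify the possible images $G_n := \rho_{E,n}(G_\Q) \subseteq \GL(2,\Z/n\Z)$ that are abelian, noting that by Proposition~\ref{prop-cyclotomic} the determinant always surjects onto $(\Z/n\Z)^\times$. Once the general ``abelian'' statement (giving $n\in\{2,3,4,5,6,8\}$ together with the tabulated Galois groups) is established, the first assertion $\Q(E[n])=\Q(\zeta_n)$ follows by comparing degrees: the equality forces $|\Gal(\Q(E[n])/\Q)| = \varphi(n)$, and inspection of the table shows this order is attained only for $n\in\{2,3,4,5\}$. The Chinese Remainder Theorem reduces the main task to (i) analyzing abelian images modulo each prime power $p^k\|n$, and (ii) handling entanglements between them.

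For step (i) with $n=p$ prime, I would apply Proposition~\ref{prop:max_groups} to place $G_p$ inside one of the five families of maximal subgroups of $\GL(2,\FF_p)$. The abelian hypothesis rules out the non-abelian exceptional subgroups and forces any Borel image to collapse into the diagonal torus, so $G_p$ must lie in a split Cartan or a non-split Cartan. In the split case, $E/\Q$ admits two independent rational $p$-isogenies, which via Mazur's theorem and the classification of rational points on $X_0(p^2)$ and $X_{\mathrm{sp}}(p)$ restricts $p$ to a very small set. In the non-split case, $G_p$ is cyclic of order dividing $p^2-1$ and $E/\Q$ corresponds to a rational point on $X_{\mathrm{ns}}(p)$; combined with the requirement that $\Q(\zeta_p)\subseteq\Q(E[p])$ cut out a specific cyclic quotient, the known classification of such points rules out this case for $p\geq 3$. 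The combined conclusion is that $\Q(E[p])/\Q$ can be abelian only for $p\in\{2,3,5\}$.

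For the prime-power refinement, at $p=2$ I would use the Rouse--Zureick-Brown classification (Theorem~\ref{thm-rzb}) to enumerate the $1208$ possible $2$-adic images and identify those whose reduction modulo $2^k$ is abelian; this yields $k\leq 3$, so $2^k\in\{2,4,8\}$. At $p=3$ and $p=5$, abelianness modulo $p^2$ would force the image to lie in a Cartan subgroup of $\GL(2,\Z/p^2\Z)$ whose determinant surjects onto $(\Z/p^2\Z)^\times$; combining this with the rarity of the mod-$p$ configurations identified above rules out $k\geq 2$, leaving $k=1$ in both cases. Thus $n$ divides $\mathrm{lcm}(8,3,5)=120$. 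Further entanglement constraints among the prime-power components — for instance, the compatibility between $\Q(\sqrt{\Delta_E})\subseteq\Q(E[2])$ and the cyclotomic subfields of $\Q(E[p])$ for odd $p\mid n$, and the requirement that the compositum remain abelian — eliminate $n\in\{10,12,15,16,20,24,30,40,60,120\}$, leaving exactly $n\in\{2,3,4,5,6,8\}$.

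Finally, for each surviving $n$, I would enumerate the abelian subgroups of $\GL(2,\Z/n\Z)$ with surjective determinant that actually arise over $\Q$, matching them to the entries of the table. The infinitude claim then follows because each realizable image corresponds to a modular curve (necessarily of genus zero, as it dominates an abelian quotient of $X(n)$) possessing a rational point and thus infinitely many, producing one-parameter families of $j$-invariants. The main obstacle will be cleanly ruling out the non-split Cartan cases at $p\in\{3,5,7\}$ and the prime-power cases at $p=3,5$ with $k\geq 2$; these typically require either deep results on rational points of $X_{\mathrm{ns}}^+(p)$ or a careful group-theoretic argument exploiting the cyclic structure of $\spl$ and $\nspl$ together with the surjectivity of the determinant to force $(\Z/p^k\Z)^\times$ to embed in an incompatibly small cyclic quotient.
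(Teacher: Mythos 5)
First, a point of context: the paper does not prove this statement at all --- it is quoted verbatim from \cite[Theorem 1.1]{LR+GJ} and used as a black box, so there is no internal proof to compare your proposal against. Measured against the actual argument of Gonz\'alez-Jim\'enez and Lozano-Robledo, your overall architecture (abelian image with surjective determinant, reduction to prime-power level, Cartan analysis, modular-curve input, genus-zero families for the infinitude claim) is the right one, but there is a genuine gap in how you dispose of the non-split Cartan case, and the endgame is asserted rather than proved.

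You claim that in the non-split case ``the known classification of such points rules out this case for $p\geq 3$.'' No such classification exists: determining the rational points of $X_{\mathrm{ns}}(p)$, let alone $X_{\mathrm{ns}}^{+}(p)$, for general $p$ is precisely the open part of Serre's uniformity problem, so this step cannot be carried out as written. The correct --- and far more elementary --- argument is the one this paper itself uses repeatedly in Section \ref{sec:abelian_exts}: the image of $\rho_{E,p}$ contains the image $c$ of complex conjugation, an element with $\det c=-1$ and $\operatorname{tr} c=0$, hence with characteristic polynomial $x^2-1$; for odd $p$ this is diagonalizable as $\mathrm{diag}(1,-1)$, and any matrix commuting with $\mathrm{diag}(1,-1)$ is itself diagonal. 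So an \emph{abelian} mod-$p$ image is automatically conjugate into the split Cartan subgroup (equivalently: the unique involution of the cyclic group $C_{ns}(p)\simeq\FF_{p^2}^\times$ is $-\operatorname{Id}$, which has determinant $+1$, so $c$ can never lie in a non-split Cartan). Everything then funnels into the split Cartan/isogeny analysis you describe, with no appeal to $X_{\mathrm{ns}}(p)$ needed. The second gap is your passage from ``$n$ divides $120$'' to the final list $\{2,3,4,5,6,8\}$ via unspecified ``entanglement constraints'': ruling out $n=10,12,15,16,20,24,\dots$ is where a large part of the work in \cite{LR+GJ} actually lies (it requires joint case analysis of the mod-$2^k$ and mod-$p$ images, not just cyclotomic compatibility), and similarly your elimination of $k\geq 2$ at $p=3,5$ rests on ``rarity'' rather than an argument. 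As written, the proposal is a plausible roadmap but not a proof.
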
 

We are ready to present the proof of Theorem \ref{thm:abelian}.

\begin{proof}[Proof of Theorem \ref{thm:abelian}]
	Suppose first that $p$ and $q$ are distinct primes, with $\Q(E[p])$ abelian, and $\Q(E[p])\cap \Q(\zeta_{q^k})$ non-trivial, for some $k\geq 1$. By Theorem \ref{thm:LR+GJ}, we have $p=2,3$, or $5$. So we distinguish three cases depending on the value of $p$:
	\begin{enumerate}
		\item If $p=2$, then $\Q(E[2])$ is either trivial, quadratic, or cyclic cubic. For any prime $q$, the curve $y^2=x^3+(-1)^{(q-1)/2}q x$ shows that $\Q(E[2])\cap \Q(\zeta_{q^k})$ can be quadratic. Similarly, if $\Q(\zeta_{q^k})$ contains a cyclic cubic extension (i.e., if $3$ divides $(q-1)q$) given by a cubic polynomial $f(x)$, then the curve $y^2=f(x)$ shows that $\Q(E[2])\cap \Q(\zeta_{q^k})$ can be a cyclic cubic. 
		\item If $p=3$, the existence of the Weil pairing forces $\Q(\zeta_3)\subseteq \Q(E[3])$. Then, by Theorem \ref{thm:LR+GJ}, the field $\Q(E[3])$ is either quadratic, in which case $\Q(E[3])=\Q(\sqrt{-3})$, or is quartic with Galois group $(\Z/2\Z)^2$, so that $\Q(E[3])$ is the compositum of $\Q(\sqrt{-3})$ with another quadratic field $K$ over $\Q$. Thus, $\Q(E[3])\cap \Q(\zeta_{q^k})$ is at most quadratic.
		\item If $p=5$, then $\Q(\zeta_5)\subseteq \Q(E[5])$, and $\Gal(\Q(E[5])/\Q)\simeq \Z/4\Z$, or $\Z/2\Z\times \Z/4\Z$, or $(\Z/4\Z)^2$. Thus, $\Q(E[5])\cap \Q(\zeta_{q^k})$ is either quadratic of a cyclic quartic. As an example of the latter, consider $E: y^2 = x^3 - x^2 - 4319x + 100435$. Here $\Gal(\Q(E[5])/\Q)\simeq (\Z/4\Z)^2$ and one of the points of order $5$ is defined over a cyclic quartic field $F$ defined by $x^4+4x^2+2=0$. This extension has discriminant $2048$, and $F\subseteq \Q(\zeta_{16})$. Hence, $\Q(E[5])=F(\zeta_5)$, and $\Q(E[5])\cap \Q(\zeta_{16})=F$, a quartic field. 
	\end{enumerate}
	Thus, in all cases $\Q(E[p])\cap \Q(\zeta_{q^k})$ is trivial, quadratic, cyclic cubic, or cyclic quartic.
	
	It remains to consider when a full coincidence $\Q(E[n])=\Q(E[m])$ can occur. Suppose such a coincidence does occur. Then, in particular, $\Gal(\Q(E[n])/\Q)\simeq \Gal(\Q(E[m])/\Q)$.	Using the classification of abelian division fields in Theorem \ref{thm:LR+GJ}, it follows that $$(m,n) \in \{ (2,3), (2,4), (3,4), (3,6),(4,6),(4,8) \}.$$ By our results in Section \ref{sec-towers},  $\Q(E[4])=\Q(E[8])$ cannot happen, and the case of $\Q(E[2])=\Q(E[4])$ in the abelian case corresponds to the curve $\tt X_{60d}$ of \cite{RZB}, which is parametrized as in the statement of the theorem (see also Remark \ref{rem-groupG}). 
	
	The case of $(m,n) = (2,3)$ can be eliminated by seeing that in this case we would have $\Q(E[2])=\Q(E[3])=\Q(E[6])$. Thus, $\Gal(\Q(E[6])/\Q)\cong \Gal(\Q(E[3])/\Q)\cong \Gal(\Q(E[2]))$ but this is impossible according to the table in Theorem  \ref{thm:LR+GJ}. Similarly, if $\Q(E[3])=\Q(E[4])$ or if $\Q(E[4])=\Q(E[6])$ are abelian extensions of $\Q$, then $\Q(E[12])=\Q(E[3])\Q(E[4])=\Q(E[6])\Q(E[4])$ would be abelian, but this contradicts Theorem \ref{thm:LR+GJ}, which shows that $\Q(E[12])/\Q$ is never an abelian extension.
	
	Thus, it remains to consider the case of $\Q(E[3])=\Q(E[6])$. Since we have shown that $\Q(E[2])=\Q([3])$ cannot occur, we restrict our attention to the case of $\Q(E[2])\subsetneq \Q(E[3])=\Q(E[6])$. In particular, by Theorem \ref{thm:LR+GJ}, the extension $\Q(E[2])/\Q$ must be trivial or quadratic, and $\Q(E[6])=\Q(E[3])/\Q$ must be biquadratic. By the results of Section 6.2 of \cite{LR+GJ}, if $\Q(E[3])$ is abelian, then the image of $\rho_{E,3}$ is contained in a split Cartan subgroup of $\GL(2,\Z/3\Z)$ and, in particular, $E/\Q$ has two independent $3$-isogenies. Moreover, if $\Q(E[2])/\Q$ is trivial or quadratic, then $E/\Q$ has at least one $2$-torsion point defined over $\Q$. Notice that if $\Q(E[2])=\Q$, then all $2$-torsion points would be $\Q$-rational, and in particular, $E$ would have three distinct $2$-isogenies. However, no such isogeny graph can occur for elliptic curves defined over $\Q$ (indeed, it would contradict a theorem of Kenku (\cite[Theorem 4.3]{chiloyan}) that says that there are at most $8$ isogenous curves in a $\Q$-isogeny class; see \cite{chiloyan}, Tables 1-4, which shows the only possibility in this case is a graph of type $R_6$). Hence, we must have that $\Q(E[2])/\Q$ is quadratic. Since $\Q(\sqrt{\Delta_E})$ is the unique quadratic (or trivial) extension contained in $\Q(E[2])$ (see \cite{adel}), it follows that $\Q(E[2])=\Q(\sqrt{\Delta_E})$. Since $\Q(E[3])/\Q$ is biquadratic and contains $\zeta_3$, it follows that $\Q(E[3])=\Q(\sqrt{-3},\sqrt{\Delta_E})$.
	
	Conversely, if $E/\Q$ is an elliptic curve with one $2$-torsion point defined over $\Q$, and $\Im \rho_{E,3}$ is contained in the split Cartan subgroup of $\GL(2,\Z/3\Z)$, then we can find a quadratic twist $E'$ of $E$ such that $\Q(E'[3])=\Q(\sqrt{-3})$, and then a quadratic twist $E''$ of $E'$ by $\Delta_{E'}$ (also a twist of $E$), such that $\Q(E''[3])=\Q(\sqrt{-3},\sqrt{\Delta_{E'}})$. Note that $\Delta_E$, $\Delta_{E'}$, and $\Delta_{E''}$ differ by $6$-th powers, because the curves are quadratic twists of each other, and so $\Q(\sqrt{\Delta_E})=\Q(\sqrt{\Delta_{E'}})=\Q(\sqrt{\Delta_{E''}})$, and $j(E'')=j(E')=j(E)$. Hence 
	$$\Q(E''[2])=\Q(\sqrt{\Delta_{E''}})\subsetneq \Q(\sqrt{-3},\sqrt{\Delta_{E''}}) = \Q(E''[3])=\Q(E''[6]).$$ Thus, every elliptic curve $E/\Q$ with one $2$-torsion point defined over $\Q$ and split Cartan mod $3$ image has a twist $E''$ with the property $\Q(E''[2])\subsetneq \Q(E''[3])=\Q(E''[6]),$ and viceversa, an elliptic curve $E''$ with such property has a $2$-torsion point over $\Q$ and split Cartan image mod $3$. Using Magma \cite{Magma} we have computed the $j$-line of the modular curve that parametrizes elliptic curves with split Cartan image mod $3$ and a $2$-torsion point over $\Q$, and it is given by
$$j(t)=-\left(\frac{(t^3 - 3t^2 - 9t - 9)(t^3 + 3t^2 + 3t - 3)(t^6 + 12t^5 + 81t^4 + 216t^3 + 243t^2 + 108t + 27)}{t(t+1)^2(t+3)^2(t^2+3)^2(t^2+3t+3)}\right)^3.$$
Hence, we conclude that if $E''$ is an elliptic curve with $\Q(E''[2])\subsetneq \Q(E''[3])=\Q(E''[6]),$ then there is some $t\in\Q$ such that $j(E'')=j(t)$ and, if $E/\Q$ is a curve with $j(E)=j(t)$ for some $t$, then there is an appropriate quadratic twist with the desired property. This concludes the proof of the theorem.
\end{proof}

\begin{example}
	We illustrate the case of $(n,m)=(3,6)$ with an example, that is, we are looking for a curve $E''$ with $\Q(E''[2])\subsetneq \Q(E''[3])=\Q(E''[6]).$ First, we evaluate the function $j(t)$ at $t=1$ to obtain $j_1= 9938375/21952$, and we find an elliptic curve $E$ with $j(E)=j_1$, which is given by
	$$E: y^2 + xy = x^3 + 5796284120487x -	9014728680220686983.$$
	This curve satisfies $E(\Q)_\text{tors}\cong \Z/2\Z$ and there are two independent $3$-isogenies. Next, we compute the field of definition of one of the $3$-isogenies: $K=\Q(\sqrt{1137565})$ (this can be accomplished using division polynomials), and find a twist $E'$ of $E$ by $1137565$ which is given by:
	$$E': y^2 + xy + y = x^3 + 4x - 6.$$
	The curve $E'$ satisfies $E'(\Q)_\text{tors}\cong \Z/6\Z$ and $\Delta_{E'}=-2^67^3$. Finally, we find a twist $E''$ of $E'$ by $-7$ which is given by
	$$E'':  y^2 = x^3 + 284445x + 97999902.$$
	The curve $E''$ satisfies $\Q(E''[2])=\Q(\sqrt{-7})$ and $\Q(E''[3])=\Q(E''[6])=\Q(\sqrt{-3},\sqrt{-7})$, as desired.
\end{example}

\begin{remark}
	It is worth pointing out that in \cite[Theorem 8.7]{morrow} Morrow gives an explicit parametrization of elliptic curves $E$ and primes $p\geq 7$ such that $\Q(E[2])\cap \Q(E[p^n])$ is a cyclic cubic field, for some $n\geq 1$.
\end{remark}

\section{Intersections of division fields and $\QQ^{ab}$}\label{sec:abelian_exts}

In order to prove Theorem \ref{thm:main_horizontal} we start by classifying the possible roots of unity in $\QQ(E[p])$. First, we prove that the powers on the prime-to-$p$ roots of unity cannot be very large compared to $p$.


\begin{prop} \label{prop:roots_in_smaller_prime_div_field}
Let $E/\QQ$ be an elliptic curve, let $p<q$ be primes, let $n,m\geq 1$, and suppose $\QQ(\zeta_{q^m})\subseteq \QQ(E[p^n])$. Then, $m=1$ unless $p=2$ and $q=3$ in which case $m\leq 2$. 
\end{prop}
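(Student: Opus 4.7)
The plan is to turn the containment $\QQ(\zeta_{q^m})\subseteq \QQ(E[p^n])$ into an order constraint on an element of $\GL(2,\Z/p^n\Z)$, then use standard facts about element orders in this group to bound $m$. Concretely, the restriction map $\Gal(\QQ(E[p^n])/\QQ)\twoheadrightarrow \Gal(\QQ(\zeta_{q^m})/\QQ)\simeq (\Z/q^m\Z)^\times$ is surjective, so I can choose $\sigma\in G_\Q$ whose restriction generates the cyclic group on the right. The matrix $A=\rho_{E,p^n}(\sigma)\in \GL(2,\Z/p^n\Z)$ then has order divisible by $\varphi(q^m)=q^{m-1}(q-1)$ (as in Corollary \ref{cor-order_p(p-1)k}).

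The key structural input is that, because $q\neq p$, the integer $q^{m-1}$ is coprime to $p$ and hence must divide the prime-to-$p$ part of $\mathrm{ord}(A)$. I will show that any element of $\GL(2,\Z/p^n\Z)$ of order prime to $p$ has order dividing $p^2-1$. For this, the reduction map $\GL(2,\Z/p^n\Z)\to \GL(2,\F_p)$ has pro-$p$ kernel, so a prime-to-$p$ order element reduces to an element of the same order in $\GL(2,\F_p)$; such an element is semisimple over $\overline{\F_p}$, its eigenvalues are conjugate and lie either in $\F_p^\times$ (giving order dividing $p-1$) or in $\F_{p^2}^\times$ (giving order dividing $p^2-1$). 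In either case, the order divides $p^2-1=(p-1)(p+1)$. Consequently, $q^{m-1}\mid (p-1)(p+1)$.

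Now I perform the elementary case analysis. Since $p<q$ and both are prime, both $p-1$ and $p+1$ are strictly less than $q$ unless $q=p+1$, which forces $p=2,q=3$. In the generic case $q\neq 3$ (or $p\neq 2$), the prime $q$ divides neither $p-1$ nor $p+1$, so $q^{m-1}=1$ and $m=1$. In the exceptional case $(p,q)=(2,3)$, the bound $q^{m-1}\mid (p-1)(p+1)=3$ gives $m-1\leq 1$, i.e., $m\leq 2$. This finishes the proof.

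There is no serious obstacle here; the argument is elementary once one isolates the prime-to-$p$ part of $\mathrm{ord}(A)$. The only small care needed is in justifying that prime-to-$p$ orders in $\GL(2,\Z/p^n\Z)$ are bounded by $p^2-1$ rather than merely by the exponent $p(p-1)^2(p+1)$ of $\GL(2,\F_p)$; the eigenvalue/semisimplicity argument above supplies this refinement and is what makes the bound tight enough to pin down $m$.
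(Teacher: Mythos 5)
Your proof is correct. The route is genuinely different from the paper's, though the final arithmetic is the same. The paper deduces the containment from a pure \emph{group-order} divisibility: surjectivity of the restriction map gives $\varphi(q^m)\mid \#\Gal(\Q(E[p^n])/\Q)\mid \#\GL(2,\Z/p^n\Z)=p^{4(n-1)+1}(p-1)^2(p+1)$, so $q^{m-1}\mid (p-1)^2(p+1)$, and then the observation that $q>p$ rules out $q\mid(p-1)$ and forces $q\mid (p+1)$, hence $q=p+1=3$ and $m\le 2$. You instead work at the level of a single element: you pick $\sigma$ restricting to a generator of $\Gal(\Q(\zeta_{q^m})/\Q)$, get $\varphi(q^m)\mid\operatorname{ord}\rho_{E,p^n}(\sigma)$, and then show that the prime-to-$p$ part of any element order in $\GL(2,\Z/p^n\Z)$ divides $p^2-1$ via the semisimplicity/eigenvalue argument. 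This gives the sharper bound $q^{m-1}\mid (p-1)(p+1)$ in place of the paper's $q^{m-1}\mid (p-1)^2(p+1)$. For this proposition the refinement is not needed, since the crucial step uses only that $q>p$ forces $q\nmid(p-1)$; but your approach has the virtues of being local (it bounds an element order, not the full group order) and of consuming no information about $n$. Your semisimplicity argument is correctly justified: an element of order prime to $p$ cannot have a nontrivial unipotent part after reduction mod $p$, and conjugate eigenvalues in $\F_{p^2}^\times$ have equal order, which divides $p^2-1$.
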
 
\begin{proof} 
Suppose that $\QQ(\zeta_{q^m})\subseteq \QQ(E[p^n])$, for some $m\geq 2$. Then, by Corollary \ref{cor-order_p(p-1)k}, the order of $\Gal(\QQ(E[p^n])/\QQ)$ is divisible by $\varphi(q^m)$, and therefore divisible by $q$. Since $\Gal(\QQ(E[p^n])/\QQ)$ is a subgroup of $\GL(2,\ZZ/p^n\ZZ)$, it follows that $q^{m-1}(q-1)$ divides $p^{4(n-1)+1}(p-1)^2(p+1)$. Since $p\neq q$ are primes, we conclude that $p\equiv \pm 1 \bmod q^{m-1}$, and since $q>p$ we must have $p\equiv -1\bmod q$ and therefore $p=q-1$, and $m=2$. This is only possible if $p=2$ and $q=3$. 
\end{proof} 

Before continuing to the proof of Theorem \ref{thm:main_horizontal} we will need to better understand how large $K_{E}(p) = \QQ(E[p])\cap\QQ^{ab}$ can be and what the structure of $\Gal(K_E(p)/\QQ)$ can be. The size of this field and the possible structure of its Galois group will depend on the image of $\rho_{E,p}$ and as such we will break this section down according to the maximal group that contains $\Im \rho_{E,p}$ (see Proposition \ref{prop:max_groups}).

\subsection{Full Image} From the results of \cite{Duke,Jones} we know that this is in fact the generic case and it turns out to also be the simplest in our context. 
\begin{prop}
Let $E/\QQ$ be an elliptic curve, let $p,q>2$ be distinct odd primes, let $n\geq 1$, and suppose $\rho_{E,p^n}$ is surjective. Then,  the intersection $\QQ(\zeta_{q^m})\cap \QQ(E[p^n])$ is trivial.
\end{prop}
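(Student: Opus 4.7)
The plan is to show that the maximal abelian subextension of $\QQ(E[p^n])/\QQ$ is exactly $\QQ(\zeta_{p^n})$, and then use $\gcd(p,q)=1$ to conclude.

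First, since $\rho_{E,p^n}$ is surjective, we have $\Gal(\QQ(E[p^n])/\QQ)\cong\GL(2,\ZZ/p^n\ZZ)$, and the intersection $F=\QQ(\zeta_{q^m})\cap\QQ(E[p^n])$ is cyclic over $\QQ$ (being a subfield of $\QQ(\zeta_{q^m})$). By Galois theory, $F$ corresponds to a normal subgroup $N\trianglelefteq\GL(2,\ZZ/p^n\ZZ)$ with abelian quotient, so $N$ must contain the commutator subgroup $[\GL(2,\ZZ/p^n\ZZ),\GL(2,\ZZ/p^n\ZZ)]$.

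The key structural claim is that for any odd prime $p$ and any $n\geq 1$,
\[
[\GL(2,\ZZ/p^n\ZZ),\GL(2,\ZZ/p^n\ZZ)]=\SL(2,\ZZ/p^n\ZZ).
\]
For $p\geq 5$ this is standard, since $\SL(2,\FF_p)$ is perfect and the reduction kernel $\SL(2,\ZZ/p^n\ZZ)\to\SL(2,\FF_p)$ is a pro-$p$ group lying inside the derived subgroup (an elementary level-lifting argument). For $p=3$ one checks directly that the derived subgroup of $\GL(2,\FF_3)$ already equals $\SL(2,\FF_3)$ by exhibiting a commutator of order $6$ (an element not contained in the derived subgroup $Q_8$ of $\SL(2,\FF_3)$), and then lifts to $p^n$ as above. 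This is the only step requiring real care, but it is a finite (small) computation.

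Granting the commutator computation, the fixed field of $\SL(2,\ZZ/p^n\ZZ)$ inside $\QQ(E[p^n])$ is precisely $\QQ(\zeta_{p^n})$, because by Proposition \ref{prop-cyclotomic} the determinant induces the cyclotomic character, and the quotient $\GL(2,\ZZ/p^n\ZZ)/\SL(2,\ZZ/p^n\ZZ)\cong(\ZZ/p^n\ZZ)^\times$ has the same order as $\Gal(\QQ(\zeta_{p^n})/\QQ)$. Therefore
\[
F\subseteq\QQ(E[p^n])^{\SL(2,\ZZ/p^n\ZZ)}=\QQ(\zeta_{p^n}).
\]

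Finally, $F\subseteq\QQ(\zeta_{p^n})\cap\QQ(\zeta_{q^m})=\QQ$ since $p$ and $q$ are distinct primes, so $F=\QQ$, as claimed. The main obstacle is the commutator-subgroup identification for $p=3$, which forces a direct verification in $\GL(2,\FF_3)$; everything else is formal from the surjectivity hypothesis and the Weil pairing.
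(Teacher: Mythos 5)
Your proof is correct and follows essentially the same route as the paper: identify the maximal abelian subextension of $\QQ(E[p^n])$ as $\QQ(\zeta_{p^n})$ via the fact that $[\GL(2,\ZZ/p^n\ZZ),\GL(2,\ZZ/p^n\ZZ)]=\SL(2,\ZZ/p^n\ZZ)$ for odd $p$, then conclude from coprimality of $p$ and $q$. The only difference is cosmetic: the paper cites Serre for the commutator identity, while you sketch the level-lifting argument and the $p=3$ verification yourself.
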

\begin{proof}
Suppose that $\Gal(\QQ(E[p^n])/\QQ)\simeq \GL(2,\ZZ/p^n\ZZ)$. Then, if $p>2$, the commutator subgroup of $\GL(2,\ZZ/p^n\ZZ)$ is $\SL(2,\ZZ/p^n\ZZ)$ (see \cite{serre}), and therefore the largest abelian quotient of $\Gal(\QQ(E[p^n])/\QQ)$ is isomorphic to $(\ZZ/p^n\ZZ)^\times$. Since $\QQ(\zeta_{p^n})\subseteq \QQ(E[p^n])$ by the Weil pairing, it follows that the largest abelian subextension of $L\subseteq \QQ(E[p^n])$ is precisely $L=\QQ(\zeta_{p^n})$. In particular, $\QQ(\zeta_{q^m})\cap \QQ(E[p^n])\subseteq L = \QQ(\zeta_{p^n})$ and therefore, the intersection must be trivial, since $q\neq p$.
\end{proof}

\begin{corollary}\label{cor:surj}
Let $E/\QQ$ be an elliptic curve, $p>2$ a prime, and $n\geq 1$. If $\rho_{E,p^n}$ is surjective then $K_{E}(p) = \QQ(E[p^n]) \cap \QQ^{ab} = \QQ(\zeta_{p^n})$. Further, in this case we have that $\Gal(K_{E}(P)/\QQ) \simeq (\ZZ/p^n\ZZ)^\times.$
\end{corollary}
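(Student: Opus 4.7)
The plan is to package the content of the preceding proposition into the language of $K_E(p)$. First, I would observe that $K_E(p) := \Q(E[p^n])\cap \Q^{ab}$ is by definition an abelian extension of $\Q$ that is also contained in $\Q(E[p^n])$; therefore $K_E(p)$ sits inside the \emph{maximal} abelian subextension of $\Q(E[p^n])/\Q$. So the task reduces to identifying that maximal abelian subextension and verifying it equals $\Q(\zeta_{p^n})$.

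To identify it, I would invoke the hypothesis that $\rho_{E,p^n}$ is surjective to conclude $\Gal(\Q(E[p^n])/\Q)\simeq \GL(2,\Z/p^n\Z)$. For $p>2$ the commutator subgroup of $\GL(2,\Z/p^n\Z)$ is $\SL(2,\Z/p^n\Z)$ (this is the fact cited from \cite{serre} in the preceding proposition), so by Galois theory the maximal abelian subextension of $\Q(E[p^n])/\Q$ is the fixed field of $\SL(2,\Z/p^n\Z)$, which has degree $\varphi(p^n)$ over $\Q$ and has Galois group isomorphic to $(\Z/p^n\Z)^\times$ via the determinant.

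Next, I would use the Weil pairing (Proposition \ref{prop-cyclotomic}) to note that $\Q(\zeta_{p^n})\subseteq \Q(E[p^n])$, and of course $\Q(\zeta_{p^n})\subseteq \Q^{ab}$, so $\Q(\zeta_{p^n})\subseteq K_E(p)$. Since $[\Q(\zeta_{p^n}):\Q]=\varphi(p^n)$ already matches the upper bound from the previous step, both containments must be equalities, yielding $K_E(p)=\Q(\zeta_{p^n})$ and $\Gal(K_E(p)/\Q)\simeq (\Z/p^n\Z)^\times$.

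There is essentially no obstacle here: the corollary is a direct repackaging of the preceding proposition together with the Weil pairing. The only thing worth being careful about is the notation $K_E(p)$, which is defined in the text using $\Q(E[p])$ but in the corollary is applied to $\Q(E[p^n])\cap\Q^{ab}$; the argument above treats the right-hand side of the equality as given in the statement, and makes no use of the $n=1$ version of the notation.
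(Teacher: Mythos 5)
Your argument is correct and is essentially the same as the paper's: the preceding proposition already identifies the maximal abelian subextension of $\QQ(E[p^n])/\QQ$ as $\QQ(\zeta_{p^n})$ using the fact that $[\GL(2,\ZZ/p^n\ZZ),\GL(2,\ZZ/p^n\ZZ)]=\SL(2,\ZZ/p^n\ZZ)$ for $p>2$ together with the Weil pairing, and the corollary is just the observation that $K_E(p)$ is trapped between $\QQ(\zeta_{p^n})$ and that maximal abelian subextension. Your explicit degree count is a clean way to phrase the sandwich, and your remark about the slight abuse of notation in $K_E(p)$ is a fair observation.
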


\subsection{Borel Image}

\begin{defn}\label{defn-borel1} Let $p$ be a prime, and $n\geq 1$. We say that a subgroup $B$ of $\GL(2,\ZZ/p^n\ZZ)$ is Borel if every matrix in $B$ is upper triangular, i.e.,
$$B\subseteq \left\{ \left(\begin{array}{cc} a & b \\ 0 & c\end{array}\right) : a,b,c\in\ZZ/p^n\ZZ,\ a,c\in(\ZZ/p^n\ZZ)^\times \right\}.$$
We say that $B$ is a non-diagonal Borel subgroup if none of the conjugates of $B$ in $\GL(2,\ZZ/p^n\ZZ)$ is formed solely by diagonal matrices. If $B$ is a Borel subgroup, we denote by $B_1$ the subgroup of $B$ formed by those matrices in $B$ whose diagonal coordinates are $1\bmod p^n$, and we denote by $B_d$ the subgroup of $B$ formed by diagonal matrices, i.e.,
$$B_1 = B\cap \left\{ \left(\begin{array}{cc} 1 & b \\ 0 & 1\end{array}\right) : b\in\ZZ/p^n\ZZ \right\}, \text{ and } B_d = B\cap \left\{ \left(\begin{array}{cc} a & 0 \\ 0 & c\end{array}\right) : a,c\in(\ZZ/p^n\ZZ)^\times \right\}.$$
\end{defn}

\begin{lemma}\cite[Lemma 2.2]{lozano}\label{lem-borel} Let $p>2$ be a prime, $n\geq 1$ and let $B\subseteq \GL(2,\ZZ/p^n\ZZ)$ be a Borel subgroup, such that $B$ contains a matrix $g=\left(\begin{array}{cc} a & b \\ 0 & c\end{array}\right)$ with $a\not\equiv c \bmod p$. Let $B'=h^{-1}Bh$ with $h=\left(\begin{array}{cc} 1 & b/(c-a) \\ 0 & 1\end{array}\right)$. Then, $B'\subseteq \GL(2,\ZZ/p^n\ZZ)$ is a Borel subgroup conjugated to $B$ satisfying the following properties:
\begin{enumerate}
\item $B' = B_d'B_1'$, i.e., for every $M\in B'$ there is $U\in B_d'$ and $V\in B_1'$ such that $M=UV$; and
\item $B/[B,B]\simeq B'/[B',B']$ and $[B',B']=B_1'$. 
\end{enumerate}
If follows that $[B,B]=B_1$ and it is a cyclic subgroup of order $p^s$ for some $0\leq s\leq n$.
\end{lemma}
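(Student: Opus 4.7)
My plan is to exploit the fact that conjugation by $h$ diagonalizes $g$, producing an honest diagonal element of $B'$. A direct matrix multiplication shows $h^{-1}gh=\begin{pmatrix} a & 0 \\ 0 & c \end{pmatrix}$ (the form $b/(c-a)$ is chosen precisely to kill the off-diagonal entry), and since $h$ is upper triangular, $B'=h^{-1}Bh$ is again a Borel subgroup. Call this new diagonal element $g'$; it lies in $B'_d$ with $a\not\equiv c\bmod p$, and will be the only input needed beyond $M$ itself for the rest of the argument.

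For part (1), I would pick an arbitrary $M=\begin{pmatrix} x & y \\ 0 & z \end{pmatrix}\in B'$ and show that its diagonal component $\mathrm{diag}(x,z)$ lies in $B'$. The key computation is
\[
[g',M]=g'Mg'^{-1}M^{-1}=\begin{pmatrix} 1 & y(a-c)/(cz) \\ 0 & 1 \end{pmatrix}.
\]
Because $a-c$ is a unit of $\ZZ/p^n\ZZ$ by the hypothesis $a\not\equiv c \bmod p$, the upper-right entry is a unit multiple of $y$, and so a suitable integer power of this unipotent commutator equals $\begin{pmatrix} 1 & y/x \\ 0 & 1 \end{pmatrix}\in B'_1$. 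Multiplying $M$ on the right by the inverse of this element yields $\mathrm{diag}(x,z)\in B'\cap D=B'_d$, completing the decomposition $B'=B'_d B'_1$.

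For part (2), the map $B'\to D$ sending $M$ to its diagonal part is a group homomorphism with kernel $B'_1$, so $B'/B'_1$ embeds in the abelian group $D$, forcing $[B',B']\subseteq B'_1$. For the reverse inclusion I would apply the same commutator formula to $M=\begin{pmatrix} 1 & u \\ 0 & 1 \end{pmatrix}\in B'_1$ to obtain $[g',M]=\begin{pmatrix} 1 & u(a-c)/c \\ 0 & 1 \end{pmatrix}$; since $B'_1$ is an additive subgroup of $\ZZ/p^n\ZZ$ and hence closed under multiplication by units, the expression $u(a-c)/c$ sweeps out all of $B'_1$ as $u$ varies. The isomorphism $B/[B,B]\simeq B'/[B',B']$ is automatic because $B$ and $B'$ are conjugate. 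Finally, since the conjugator $h$ is itself unipotent it commutes pointwise with every unipotent upper-triangular matrix, giving $B_1=B'_1$; hence $[B,B]=B_1$, and the cyclic-of-order-$p^s$ structure follows from the classification of subgroups of $\ZZ/p^n\ZZ$.

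The main obstacle is part (1): producing the diagonal part of a generic $M\in B'$ as an element of $B'$ itself. The trick is to use the single diagonal element $g'$ guaranteed by the hypothesis to form the commutator $[g',M]$, whose off-diagonal entry is a unit multiple of that of $M$ thanks to $a-c$ being a unit modulo $p^n$ — this is precisely where the condition $a\not\equiv c\bmod p$ enters, and every other step then reduces to bookkeeping in $\ZZ/p^n\ZZ$.
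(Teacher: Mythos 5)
Your proof is correct, and the key device — conjugating by $h$ to produce a genuinely diagonal element $g'=\mathrm{diag}(a,c)$ of $B'$ with $a-c$ a unit, then using the commutator $[g',M]=\begin{pmatrix}1&y(a-c)/(cz)\\0&1\end{pmatrix}$ and an integer power to land on $\begin{pmatrix}1&y/x\\0&1\end{pmatrix}\in B'_1$ — is exactly the right mechanism. The paper itself does not give a proof of this lemma (it cites it from \cite{lozano}), so there is no internal argument to compare against. One small remark: for the reverse inclusion $B'_1\subseteq[B',B']$ you appeal to the fact that $B'_1$, as an additive subgroup of $\ZZ/p^n\ZZ$, is stable under multiplication by units; this is true because every such subgroup is $p^j\ZZ/p^n\ZZ$, hence an ideal, but you could avoid invoking it altogether by noting that $M$ itself is an integer power of $[g',M]$ (the scaling factor $c/(a-c)$ being a unit), giving $M\in[B',B']$ directly. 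The rest — $B/[B,B]\simeq B'/[B',B']$ by conjugacy, $B_1=B'_1$ because $h$ is unipotent and so commutes with all of $B_1$, and cyclicity of order $p^s$ from the structure of subgroups of $\ZZ/p^n\ZZ$ — is all fine.
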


\begin{prop}\label{prop:borel}
Let $E/\QQ$ be an elliptic curve, let $p>2$ be a prime, let $n\geq 1$, and suppose $\Im\rho_{E,p^n}\subseteq \GL(2,\ZZ/p^n\ZZ)$ is a Borel subgroup. Then, the Galois group of the maximal abelian subextension $L\subseteq \Gal(\QQ(E[p^n])/\QQ)$ is a subgroup of $(\ZZ/p^n\ZZ)^\times \times (\ZZ/p^n\ZZ)^\times$, and $L$ contains $\QQ(\zeta_{p^n})$. In particular, if $q\neq p$ is another prime and $\QQ(\zeta_{q^m})\subseteq \QQ(E[p^n])$ for some $m\geq 1$, then $\varphi(q^m)$ is a divisor of $\varphi(p^n)$. If in addition $q>p$, then $m=1$ and $q-1$ is a divisor of $\varphi(p^n)$.
\end{prop}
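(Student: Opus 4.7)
The plan is to combine Lemma \ref{lem-borel} with the surjectivity of the determinant character (Proposition \ref{prop-cyclotomic}) to force the commutator subgroup of $G=\Im\rho_{E,p^n}$ to fill out the unipotent radical of the Borel containing it, so that the abelianization $G^{\text{ab}}$ injects into the diagonal torus.

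First, I would observe that, by the Weil pairing (Proposition \ref{prop-cyclotomic}), the map $\det\colon G\to(\ZZ/p^n\ZZ)^\times$ is surjective, which immediately gives $\QQ(\zeta_{p^n})\subseteq\QQ(E[p^n])$ and hence $\QQ(\zeta_{p^n})\subseteq L$, the maximal abelian subextension of $\QQ(E[p^n])/\QQ$. Because $p>2$ and $(\ZZ/p\ZZ)^\times$ contains non-squares, the surjectivity of $\det$ rules out the possibility that every element of $G$ has diagonal entries satisfying $a\equiv c\pmod p$ (otherwise every determinant $ac$ would be a square modulo $p$). Thus $G$ contains an element $g_0=\left(\begin{smallmatrix}a&b\\0&c\end{smallmatrix}\right)$ with $a\not\equiv c\bmod p$, which is exactly the hypothesis needed to invoke Lemma \ref{lem-borel}. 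After conjugating by the matrix $h$ produced by that lemma, I can replace $G$ by $G'=h^{-1}Gh$, which sits inside a Borel $B'=B_d'B_1'$ with $[B',B']=B_1'$, and which additionally contains the diagonal matrix $d=h^{-1}g_0h=\mathrm{diag}(a,c)$.

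The heart of the proof is to upgrade the inclusion $[G',G']\subseteq B_1'$ (coming from Lemma \ref{lem-borel}) to the equality $[G',G']=G'\cap B_1'$. For this, given an arbitrary $u=\left(\begin{smallmatrix}1&\beta\\0&1\end{smallmatrix}\right)\in G'\cap B_1'$, I would compute
\[
[d,u] \;=\; \begin{pmatrix}1 & \beta(ac^{-1}-1)\\ 0 & 1\end{pmatrix},
\]
and observe that $ac^{-1}-1$ is a unit modulo $p^n$ (being a unit mod $p$), so under the natural isomorphism $B_1'\cong\ZZ/p^n\ZZ$ the map $u\mapsto[d,u]$ is a bijection on $G'\cap B_1'$. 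Hence every element of $G'\cap B_1'$ is realized as a commutator, giving the reverse inclusion. I expect this to be the main obstacle, since it is the step that uses all three hypotheses (Borel image, surjective determinant, and $p>2$) in an essential way.

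Having established $[G',G']=G'\cap B_1'$, I would conclude that $G^{\text{ab}}\cong G'/(G'\cap B_1')$ embeds into $B'/B_1'\cong B_d'\cong(\ZZ/p^n\ZZ)^\times\times(\ZZ/p^n\ZZ)^\times$, proving the first assertion. For the cyclotomic consequence, suppose $\QQ(\zeta_{q^m})\subseteq\QQ(E[p^n])$ for some prime $q\neq p$; then $\QQ(\zeta_{q^m})\subseteq L$ and, combined with $\QQ(\zeta_{p^n})\subseteq L$, the compositum $\QQ(\zeta_{q^mp^n})\subseteq L$ has Galois group of order $\varphi(q^m)\varphi(p^n)$ by the Chinese Remainder Theorem. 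This order must divide $|\Gal(L/\QQ)|=|G^{\text{ab}}|$, which itself divides $\varphi(p^n)^2$; cancelling one factor of $\varphi(p^n)$ yields $\varphi(q^m)\mid\varphi(p^n)$. For the final clause, assuming $q>p$, the relation $q^{m-1}(q-1)\mid p^{n-1}(p-1)$ combined with $\gcd(p,q)=1$ forces $q^{m-1}\mid(p-1)$, and since $q>p>p-1$, this is possible only when $m=1$, leaving the residual divisibility $q-1\mid\varphi(p^n)$.
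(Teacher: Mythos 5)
Your proof is correct and follows essentially the same path as the paper's: use surjectivity of $\det$ (hence $p>2$) to produce a matrix with $a\not\equiv c\bmod p$, invoke Lemma \ref{lem-borel} to conjugate, identify the abelianization with a subgroup of the diagonal torus, and then exploit the linear disjointness of $\QQ(\zeta_{p^n})$ and $\QQ(\zeta_{q^m})$. The one thing to note is that you over-engineer what you call the ``heart'': Lemma \ref{lem-borel}(2), applied with $B=G$ itself (the paper's Definition \ref{defn-borel1} allows any upper-triangular subgroup to be ``Borel''), already furnishes the \emph{equality} $[G',G']=B_1'=G'\cap\{\text{unipotents}\}$, so the commutator computation you carry out with $d=\mathrm{diag}(a,c)$ is in effect a re-derivation of a step already contained in the cited lemma rather than a new ingredient. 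Your final divisibility step via orders ($\varphi(q^m)\varphi(p^n)\mid\varphi(p^n)^2$) is valid and interchangeable with the paper's exponent/subgroup comparison.
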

\begin{proof}
Let $G=\Gal(\QQ(E[p^n])/\QQ)\simeq \Im\rho_{E,p^n}\subseteq \GL(2,\ZZ/p^n\ZZ)$, and suppose $G$ is a Borel subgroup. Then, $G$ contains a matrix $g$ as in Lemma \ref{lem-borel}, because otherwise $\det(G)$ would consist only of square classes, and therefore would not be all of $(\ZZ/p^n\ZZ)^\times$. Thus, by Lemma \ref{lem-borel}, the commutator of $G$ is $G_1$ and $G/G_1\simeq G_d$. Thus, $\Gal(L/\QQ)\simeq G/G_1$, where $L$ is the maximal abelian subextension $L\subseteq \Gal(\QQ(E[p^n])/\QQ)$, is isomorphic to a subgroup of   $(\ZZ/p^n\ZZ)^\times \times (\ZZ/p^n\ZZ)^\times$, as desired.

Now, if $q\neq p$ is another prime and $\QQ(\zeta_{q^m})\subseteq \QQ(E[p^n])$ for some $m\geq 1$, then the compositum $K=\QQ(\zeta_{p^n})\QQ(\zeta_{q^m})$ is contained in $L$. Since the primes are distinct, then $\Gal(K/\QQ)\simeq (\ZZ/p^n\ZZ)^\times \times (\ZZ/q^m\ZZ)^\times$, and since $K\subseteq L$, it follows that $(\ZZ/q^m\ZZ)^\times$ must be a subgroup of $(\ZZ/p^n\ZZ)^\times$. It follows that $\varphi(q^m)$ is a divisor of $\varphi(p^n)$. If in addition we have $q>p$, it follows that $m=1$, and $q-1$ divides $\varphi(p^n)$, as claimed.
\end{proof}

\begin{corollary}\label{cor:borel}
Let $E/\QQ$ be an elliptic curve, $p>2$ a prime, and $n\geq 1$. If $\Im\rho_{E,p^n}$ is  contained in a Borel subgroup of $\GL(2,\ZZ/p\ZZ)$ then the field $K_{E}(p) = \QQ(E[p^n]) \cap \QQ^{ab}$ has $\Gal(K_{E}(P)/\QQ)$ isomorphic to a $(\ZZ/p^n\ZZ)^\times\times C$ where $C$ is a cyclic group of order dividing $\varphi(p^n)$. Thus, $K_p(E)$ is the compositum of $\Q(\zeta_{p^n})$ and a cyclic extension $L/\Q$ with $\Gal(L/\Q)=C$.
\end{corollary}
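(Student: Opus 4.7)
The plan is to combine Proposition~\ref{prop:borel} with a direct structural analysis of subgroups of $(\Z/p^n\Z)^\times\times(\Z/p^n\Z)^\times$ whose determinant (product) map is surjective. First I would observe that $K_E(p)$ is precisely the maximal abelian subextension of $\Q(E[p^n])/\Q$: any abelian Galois extension of $\Q$ contained in $\Q(E[p^n])$ automatically sits inside $\Q^{\mathrm{ab}}$, hence inside $K_E(p)$, and conversely $K_E(p)$ is itself abelian over $\Q$. Proposition~\ref{prop:borel} (via Lemma~\ref{lem-borel}, which gives $G/[G,G]\cong G_d$) then identifies $H:=\Gal(K_E(p)/\Q)$ with a subgroup of $(\Z/p^n\Z)^\times\times(\Z/p^n\Z)^\times$, and Proposition~\ref{prop-cyclotomic} together with $\Q(\zeta_{p^n})\subseteq K_E(p)$ forces the map $\det\colon H\to (\Z/p^n\Z)^\times$, $(a,c)\mapsto ac$, to be surjective.

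Next, since $p>2$, the group $A:=(\Z/p^n\Z)^\times$ is cyclic of order $N=\varphi(p^n)$. Writing $A\cong \Z/N\Z$ additively, the determinant map becomes $(i,j)\mapsto i+j$. I would pick any $\sigma_0=(i_0,j_0)\in H$ with $i_0+j_0$ a generator of $\Z/N\Z$, which is possible by surjectivity of $\det|_H$. A short computation shows $\gcd(i_0,j_0,N)=1$, since any common divisor would divide $i_0+j_0$ and hence divide $\gcd(i_0+j_0,N)=1$. This means $\sigma_0$ has order exactly $N$ in $(\Z/N\Z)^2$, so $\langle\sigma_0\rangle\subseteq H$ is cyclic of order $N$.

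The main step is to split $H$ as a direct product. Set $C:=\ker(\det|_H)$. If $k\sigma_0\in C$ then $k(i_0+j_0)\equiv 0\pmod N$, and because $i_0+j_0$ is a unit modulo $N$ this forces $k\equiv 0$, so $\langle\sigma_0\rangle\cap C=\{0\}$. Combined with $|H|=N\cdot|C|$, this yields $H=\langle\sigma_0\rangle\oplus C\cong A\times C$. Moreover, $C$ is contained in the kernel of the addition map on $(\Z/N\Z)^2$, which is the cyclic group $\{(i,-i):i\in\Z/N\Z\}\cong \Z/N\Z$, so $C$ is cyclic of order dividing $N=\varphi(p^n)$.

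Finally, translating back via the Galois correspondence: the internal decomposition $H\cong \Gal(\Q(\zeta_{p^n})/\Q)\times C$ shows that $K_E(p)$ is the compositum of $\Q(\zeta_{p^n})$ with the fixed field $L$ of $\langle\sigma_0\rangle$, where $\Gal(L/\Q)\cong C$ is cyclic and $L\cap \Q(\zeta_{p^n})=\Q$. I expect the only mildly delicate point to be the direct-product splitting $H\cong A\times C$, for which the crucial input is the existence of an element of $H$ whose determinant is a generator of $(\Z/p^n\Z)^\times$; everything else is a straightforward consequence of Proposition~\ref{prop:borel} and elementary arithmetic in cyclic groups.
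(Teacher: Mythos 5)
The paper states this as a corollary of Proposition~\ref{prop:borel} without giving an explicit argument, so your job is to supply the implicit group-theoretic step, and you have done so correctly. Your proof correctly identifies $K_E(p)$ as the maximal abelian subextension, invokes Proposition~\ref{prop:borel}/Lemma~\ref{lem-borel} to realize $H=\Gal(K_E(p)/\Q)$ as a subgroup of $(\Z/p^n\Z)^\times\times(\Z/p^n\Z)^\times$ with the product map surjective (via Proposition~\ref{prop-cyclotomic}), and then carries out the splitting honestly. The two arithmetic lemmas you use are both right: if $i_0+j_0$ generates $\Z/N\Z$ then $\gcd(i_0,j_0,N)=1$, which does imply $(i_0,j_0)$ has order exactly $N$ in $(\Z/N\Z)^2$ (since the order is $N/\gcd(i_0,j_0,N)$); and $\ker(\det|_H)$ sits inside the antidiagonal $\{(i,-i)\}\cong\Z/N\Z$, hence is cyclic of order dividing $\varphi(p^n)$. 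The counting argument then forces the internal direct product $H=\langle\sigma_0\rangle\times C$, and the Galois-theoretic translation at the end is correct as well.

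One small point of caution, not a gap in your reasoning but in the statement as printed: the corollary says ``contained in a Borel subgroup of $\GL(2,\Z/p\Z)$,'' but the reduction to Proposition~\ref{prop:borel} that you (rightly) make requires $\Im\rho_{E,p^n}$ to be upper triangular \emph{mod $p^n$}, which is what Proposition~\ref{prop:borel} actually assumes; being Borel mod $p$ does not by itself give this. You have silently read the hypothesis the same way the proposition does, which is clearly the authors' intent, but it is worth flagging that the mod-$p$ phrasing in the corollary is a typo for mod-$p^n$.
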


\subsection{Exceptional Images}

Serre showed that an elliptic curve over $\QQ$ cannot have exceptional image for $p\geq 17$ (see \cite[Lemma 18]{serre2}). Moreover:

\begin{thm}\cite[Theorem 8.1]{lozano0}\label{thm-exceptional} Let $E/\QQ$ be an elliptic curve, and $p\geq 3$ a prime number, such that the image $\overline{G}$ of $\rho_{E,p}$ in $\PGL(E[p])$ is isomorphic to $\overline{G}=A_4$, $S_4$, or $A_5$. Then, $p\leq 13$ and $\overline{G}=S_4$.
\end{thm}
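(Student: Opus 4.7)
The plan is to combine the cited theorem of Serre (which immediately gives $p \leq 13$, since exceptional image is impossible for $p \geq 17$) with a short group-theoretic argument that eliminates the possibilities $\overline{G} = A_4$ and $\overline{G} = A_5$, leaving only $\overline{G} = S_4$.

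The key tool I would use is the short exact sequence
\[
1 \longrightarrow \PSL(2,\mathbb{F}_p) \longrightarrow \PGL(2,\mathbb{F}_p) \xrightarrow{\;\varepsilon\;} \mathbb{F}_p^\times/(\mathbb{F}_p^\times)^2 \longrightarrow 1,
\]
where $\varepsilon$ sends the projective class of $g \in \GL(2,\mathbb{F}_p)$ to the square class of $\det(g)$. This is well defined because scalar matrices $\lambda I$ satisfy $\det(\lambda I) = \lambda^2$, and for $p$ odd the right-hand quotient has order $2$. By Proposition \ref{prop-cyclotomic}, $\det \rho_{E,p}$ is the mod-$p$ cyclotomic character and therefore surjects onto $(\mathbb{Z}/p\mathbb{Z})^\times$; composing with $\varepsilon$, the induced homomorphism $\overline{G} \to \mathbb{Z}/2\mathbb{Z}$ is surjective. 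Hence $\overline{G}$ must admit a normal subgroup of index $2$.

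I would then rule out the two unwanted possibilities by inspecting normal subgroup lattices. The group $A_5$ is simple, so its only normal subgroups are $\{e\}$ and $A_5$, with quotients of order $60$ and $1$; neither is $\mathbb{Z}/2\mathbb{Z}$. The normal subgroups of $A_4$ are $\{e\}$, $V_4 \cong \mathbb{Z}/2 \times \mathbb{Z}/2$, and $A_4$, with quotients of order $12$, $3$, and $1$; once again none has order $2$. In both cases we reach a contradiction with the surjection $\overline{G} \twoheadrightarrow \mathbb{Z}/2\mathbb{Z}$. Together with Serre's bound this forces $\overline{G} = S_4$ and $p \leq 13$.

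I do not anticipate a serious obstacle: the argument reduces to Dickson's observation that the square class of the determinant distinguishes $\PSL_2$ inside $\PGL_2$, combined with the Weil-pairing-driven surjectivity of the mod-$p$ cyclotomic character. The only verification worth double-checking is that $\varepsilon$ is well-defined on projective classes, which follows from $\det(\lambda I) = \lambda^2$, and that the argument does not degenerate at $p = 3$, where $\PGL(2,\mathbb{F}_3) \cong S_4$ and $\PSL(2,\mathbb{F}_3) \cong A_4$; in that case $\overline{G} = A_4$ would place the image inside $\PSL_2$, contradicting the surjectivity of $\det$ onto $\mathbb{F}_3^\times = \{\pm 1\}$.
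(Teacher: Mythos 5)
Your argument is correct, and the paper itself provides no proof here—it cites Theorem 8.1 of Lozano-Robledo's earlier paper—so there is nothing in the present text to compare against line by line. That said, your combination of Serre's $p<17$ bound for exceptional images with the square-class obstruction coming from the surjectivity of $\det\rho_{E,p}$ is exactly the standard way to exclude $A_4$ and $A_5$, and it is almost certainly the argument used in the cited source. The key chain holds up: $\det\rho_{E,p}=\chi_p$ is surjective onto $\FF_p^\times$ by the Weil pairing (Proposition \ref{prop-cyclotomic}), the map $\varepsilon\colon\PGL(2,\FF_p)\to\FF_p^\times/(\FF_p^\times)^2$ induced by $\det$ is well-defined since $\det(\lambda I)=\lambda^2$, so $\overline G$ surjects onto a group of order $2$; $A_4$ (normal subgroups $\{e\}$, $V_4$, $A_4$) and the simple group $A_5$ admit no index-$2$ normal subgroup, while $S_4$ has $A_4$. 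Your remark about $p=3$ is the right sanity check: $\PSL(2,\FF_3)\cong A_4$ is precisely $\ker\varepsilon$, and full determinant forbids landing inside it. The proof is complete as written.
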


Further, by work of Sutherland and Zywina \cite{Sutherland2, zywina1}, we know that if $3\leq p \leq 13$ and  the image of $\rho_{E,p}$ in $\PGL(E[p])$ is isomorphic to $S_4$, then $p=5$ or $p=13$.

\begin{prop}\cite{Sutherland2, zywina1}\label{prop-sz}  Let $E/\QQ$ be an elliptic curve, and $p\geq 3$ a prime number, such that the image of $\rho_{E,p}$ in $\PGL(E[p])$ is isomorphic to $S_4$. Then, $p=5$, or $p=13$. Moreover, $\Im \rho_{E,p}\subseteq \GL(2,\ZZ/p\ZZ)$ is a conjugate subgroup of
\begin{eqnarray*}
H_5=\left\langle \left(\begin{array}{cc} 1 & 4 \\ 1 & 1\end{array}\right),\left(\begin{array}{cc} 1 & 0 \\ 0 & 2\end{array}\right)\right\rangle & \subseteq & \GL(2,\ZZ/5\ZZ), \ \text{ or }\\ H_{13}=\left\langle \left(\begin{array}{cc} 1 & 12 \\ 1 & 1\end{array}\right),\left(\begin{array}{cc} 1 & 0 \\ 0 & 8\end{array}\right)\right\rangle & \subseteq & \GL(2,\ZZ/13\ZZ).
\end{eqnarray*}\end{prop}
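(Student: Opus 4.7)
The plan is to apply Theorem \ref{thm-exceptional} to restrict to $p \in \{3, 5, 7, 11, 13\}$, and for each such prime analyze the modular curve whose non-cuspidal, non-CM rational points classify elliptic curves $E/\QQ$ with projective mod-$p$ image $S_4$.

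The case $p=3$ is immediate: since $\PGL(2,\FF_3) \cong S_4$, having projective mod-$3$ image $S_4$ is equivalent to $\rho_{E,3}$ having surjective projective image, which falls under the ``full image'' case of Proposition \ref{prop:max_groups} rather than the exceptional one. For the remaining $p \in \{5, 7, 11, 13\}$, I would first enumerate, up to conjugation in $\GL(2,\FF_p)$, the subgroups $G \subseteq \GL(2,\FF_p)$ with $\det(G) = \FF_p^\times$ (forced by the Weil pairing, cf. Proposition \ref{prop-cyclotomic}) whose image in $\PGL(2,\FF_p)$ is $S_4$. Because the faithful two-dimensional projective representations of $S_4$ are classified, this produces a short explicit list; for $p = 5$ and $p = 13$ the list has a single conjugacy class, represented by $H_5$ and $H_{13}$, and for $p = 7$ and $p = 11$ one obtains analogous explicit candidates.

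Given such a $G$, the modular curve $X_G$ parametrizes (up to twist) elliptic curves with mod-$p$ image contained in $G$, and the non-cuspidal, non-CM $\QQ$-points correspond precisely to the $j$-invariants realizing the exceptional image. For $p = 7$ and $p = 11$ the resulting $X_G$ has positive genus and no such rational points; ruling this out is the arithmetic heart of the argument, executed via Chabauty--Coleman or Mordell--Weil sieve methods on the Jacobian of $X_G$ (this is the deep input of Sutherland and Zywina), together with a finite check against the thirteen rational CM $j$-invariants. Conversely, for $p = 5$ and $p = 13$ one finds that $X_G$ is of genus zero with a rational parametrization, yielding infinite families of examples; for each such $E/\QQ$ one verifies, by computing the Galois action on an $\FF_p$-basis of $E[p]$ through the $p$-division polynomial and the Weil pairing, that $\Im \rho_{E,p}$ is conjugate in $\GL(2,\FF_p)$ to $H_p$, as claimed.

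The principal obstacle lies in the rational-point analysis on $X_G$ for $p = 11$ (and, to a lesser extent, $p = 7$): the Jacobian has positive Mordell--Weil rank, so Mazur-style isogeny arguments do not apply directly, and one must rely on the more delicate descent/Chabauty machinery of Sutherland--Zywina to conclude that the only exceptional $S_4$ configurations realizable over $\QQ$ are the two listed.
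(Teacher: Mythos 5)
The paper does not prove this proposition; its ``proof'' is a one-line citation to Sutherland's Tables 3 and 4 and to Zywina's Theorems 1.4 and 1.8. What you have written is a reconstruction, at a much finer level of detail, of the argument that lives in those references, and the overall strategy you lay out --- cut down to $p\leq 13$ via Theorem~\ref{thm-exceptional}, dispose of $p=3$ by noting $\PGL(2,\FF_3)\cong S_4$ is the full projective group, then classify the relevant subgroups of $\GL(2,\FF_p)$ with full determinant and study rational points on the corresponding modular curves --- is in the right spirit. But since you openly defer the ``arithmetic heart'' to the same Sutherland--Zywina machinery, the proposal ultimately performs the same function as the citation, without being self-contained.

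More substantively, you have the key facts wrong for $p=13$. The modular curve $X_{S_4}(13)$ is \emph{not} genus zero: it has genus $3$, and Zywina's Theorem 1.8 establishes that it carries only \emph{finitely} many non-cuspidal, non-CM rational points (three $j$-invariants), which is precisely where the hard descent/rational-point work is concentrated. Your claim that $p=5$ and $p=13$ both yield genus-zero curves with ``infinite families of examples'' is therefore incorrect for $p=13$, and the computational effort you allocate to $p=11$ is misplaced relative to where the difficulty actually sits in the literature you are appealing to.
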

\begin{proof}
See \cite{Sutherland2}, Tables 3 and 4, and \cite{zywina1}, Theorems 1.4 and 1.8.
\end{proof}

\begin{prop}\label{prop-exceptional}
Let $E/\QQ$ be an elliptic curve, let $p>2$ be a  prime, and let $n\geq 1$. Moreover, assume that the image of $\rho_{E,p}$ is an exceptional subgroup (i.e., the projective image of $\Im \rho_{E,p}$ in $\PGL(2,\ZZ/p\ZZ)$ is isomorphic to $S_4$). Then, $\QQ(\zeta_{q^m})\cap \QQ(E[p^n])$ is trivial for any $m\geq 1$ and any prime $q\neq p$.
\end{prop}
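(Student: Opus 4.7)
The plan is to invoke Proposition \ref{prop-sz} to reduce to $p\in\{5,13\}$ with $G_1:=\Im\rho_{E,p}$ conjugate to the explicit group $H_p$, and then to show that the maximal abelian subextension of $\QQ(E[p^n])/\QQ$ is precisely $\QQ(\zeta_{p^n})$. This immediately yields the conclusion since $\QQ(\zeta_{q^m})\cap \QQ(\zeta_{p^n})=\QQ$ whenever $\gcd(p^n,q^m)=1$, i.e.\ whenever $q\ne p$.

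The mod-$p$ step is to verify $H_p^{\text{ab}}\simeq \FF_p^\times$ via the determinant. Computing the commutator $C:=[A,B]$ of the explicit generators from Proposition \ref{prop-sz}, one sees $C\in \SL(2,\FF_p)$ has trace one, hence characteristic polynomial $x^2-x+1$, whose roots are primitive sixth roots of unity; so $C^3=-I\in[H_p,H_p]$. Combined with the projective surjection $[H_p,H_p]\twoheadrightarrow [S_4,S_4]=A_4$, this forces $|[H_p,H_p]|=24$. A short count using $-I\in H_p$ gives $|H_p|=24(p-1)$, so $|H_p^{\text{ab}}|=p-1$; surjectivity of $\det$ then yields $H_p^{\text{ab}}\simeq \FF_p^\times$ via $\det$.

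To lift to $n\geq 2$, note that the reduction $G_n:=\Im\rho_{E,p^n}\to H_p$ has pro-$p$ kernel $K_n$, so the prime-to-$p$ part of $G_n^{\text{ab}}$ already equals $H_p^{\text{ab}}=\FF_p^\times$. For the $p$-part, the coprime-order condition $\gcd(|H_p|,p)=1$ and Schur--Zassenhaus give $G_n\simeq H_p\ltimes K_n$, so $G_n^{\text{ab}}\simeq H_p^{\text{ab}}\times (K_n)_{H_p}$. Filtering $K_n$ by kernels of reduction mod $p^i$, each graded piece is isomorphic to $M_2(\FF_p)=\FF_p\cdot I\oplus \mathfrak{sl}_2(\FF_p)$ as an $H_p$-module under conjugation. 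Since $H_p$ is absolutely irreducible on $\FF_p^2$ (its projective image $S_4$ fixes no point of $\mathbb{P}^1(\FF_p)$, and a cyclic-Cartan order bound rules out the non-absolute case), Schur's lemma forces the centralizer of $H_p$ in $M_2(\FF_p)$ to be just scalars, so $(\mathfrak{sl}_2)^{H_p}=0$; by the coprime-order hypothesis, also $(\mathfrak{sl}_2)_{H_p}=0$. Hence each graded piece of $(K_n)_{H_p}$ contributes only the scalar summand $\FF_p$; comparing with the surjection $(K_n)_{H_p}\twoheadrightarrow \ZZ/p^{n-1}\ZZ$ given by the determinant, this forces $(K_n)_{H_p}\simeq \ZZ/p^{n-1}\ZZ$ and $G_n^{\text{ab}}\simeq (\ZZ/p^n\ZZ)^\times$ via $\det$, as required.

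The hardest part will be making the Schur--Zassenhaus step clean, which requires that $G_n$ be the full preimage of $H_p$ in $\GL(2,\ZZ/p^n\ZZ)$. I expect this to follow from the known classification of mod-$p^n$ images on the exceptional genus-zero modular curves (for $p=5$ and $p=13$) or to be verifiable in Magma; even if $G_n$ is a proper subgroup, the same module-theoretic analysis applied to the actual $H_p$-submodule structure of $K_n$ shows that the $p$-part of $G_n^{\text{ab}}$ still cannot exceed $\ZZ/p^{n-1}\ZZ$ and is therefore absorbed by the determinant.
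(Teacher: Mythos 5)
Your mod-$p$ computation is essentially the paper's argument: both reduce via Theorem \ref{thm-exceptional} and Proposition \ref{prop-sz} to $p\in\{5,13\}$ with image $H_p$, and both conclude $H_p/[H_p,H_p]\simeq(\ZZ/p\ZZ)^\times$ (the paper simply asserts a ``simple computation,'' while you verify it by hand through the trace-one commutator $C$ with $C^3=-\operatorname{Id}$; your verification is correct). Where you diverge, and where the trouble lies, is the lift to $n\geq 2$.

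Your lift is heavy machinery — Schur--Zassenhaus, a filtration of the pro-$p$ kernel $K_n$, coinvariants of the $H_p$-module $M_2(\FF_p)=\FF_p\cdot I\oplus\mathfrak{sl}_2$ — and you yourself flag the gap: the Schur--Zassenhaus complement and your cohomological bookkeeping presume $G_n$ is the full preimage of $H_p$, which you cannot establish without an external classification of mod-$p^n$ images (not currently available for $p=5,13$). Your final hedge (``even if $G_n$ is a proper subgroup\ldots'') is plausible but not carried out; you would at least need to invoke semisimplicity of $\FF_p[H_p]$ (coprime order) and exactness of coinvariants to bound each graded piece's contribution by $\FF_p$, and you have not done so.

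The paper sidesteps all of this with a two-line degree argument. Since $\Ker\bigl(\GL(2,\ZZ/p^n\ZZ)\to\GL(2,\ZZ/p\ZZ)\bigr)$ is a $p$-group, $[\QQ(E[p^n]):\QQ(E[p])]$ is a power of $p$. Given the already-established fact that $\QQ(\zeta_{q^m})\cap\QQ(E[p])=\QQ$, any field $\QQ\subseteq F\subseteq\QQ(E[p^n])$ with $F\cap\QQ(E[p])=\QQ$ satisfies $[F:\QQ]=[F\cdot\QQ(E[p]):\QQ(E[p])]$, which divides the $p$-power $[\QQ(E[p^n]):\QQ(E[p])]$. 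In particular, if $\QQ(\zeta_{q^m})\subseteq\QQ(E[p^n])$ then $\varphi(q^m)=q^{m-1}(q-1)$ is a power of $p$, forcing $m\leq 1$ and $q-1=p^t$; for $p>2$ odd this is impossible unless $q^m\leq 2$. No knowledge of the group $G_n$ beyond its image mod $p$ is required. You should replace your module-theoretic lift with this elementary observation — it closes the gap you identified and is what the paper does.
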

\begin{proof}
Suppose that $E$ is a curve as in the statement. Then, by Theorem \ref{thm-exceptional}, we have $p\leq 13$, and by Proposition \ref{prop-sz}, we have $p=5$ or $p=13$, and $G=\Im \rho_{E,p}$ is a conjugate of $H_5$ or $H_{13}$, respectively. A simple computation shows that 
$$H_5/[H_5,H_5]\simeq (\ZZ/5\ZZ)^\times \text{  and  }  H_{13}/[H_{13},H_{13}]\simeq (\ZZ/13\ZZ)^\times$$
and therefore the Galois group of the maximal abelian subextension $L_p$ of $\QQ(E[p])$ is isomorphic to $(\ZZ/p\ZZ)^\times$, for $p=5$, or $13$. Since $\QQ(\zeta_p)\subseteq \QQ(E[p])$, it follows that $L_p=\QQ(\zeta_p)$. In particular, if $q\neq p$, then $\QQ(\zeta_{q^m})\cap \QQ(E[p])\subseteq L_p$ must be trivial. Now, if $\QQ(\zeta_{q^m})\subseteq \QQ(E[p^n])$, and since $[\QQ(E[p^n]):\QQ(E[p])]$ is a power of $p$, it would follow that $\varphi(q^m)$ is itself a power of $p$. Hence, $0\leq m\leq 1$, and $q-1=p^t$ for some $t\geq 1$, which is impossible for $p>2$, unless $m=0$. Thus, $\QQ(\zeta_{q^m})\cap \QQ(E[p^n])$ is trivial for any $q\neq p$ and any $m\geq 1$, as desired. 
\end{proof}

\begin{corollary}
Let $E/\QQ$ be an elliptic curve and  $p>2$ a prime. If $\rho_{E,p}$ is contained in an exceptional group then $K_{E}(p) = \QQ(E[p]) \cap \QQ^{ab} = \QQ(\zeta_{p})$. Thus, in this case we have that $\Gal(K_{E}(p)/\QQ) \simeq (\ZZ/p\ZZ)^\times.$
\end{corollary}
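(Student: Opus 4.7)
The plan is to observe that this corollary is essentially an immediate specialization of Proposition \ref{prop-exceptional} to the case $n=1$. By definition, the field $K_E(p) = \QQ(E[p]) \cap \QQ^{\text{ab}}$ is precisely the maximal abelian subextension of $\QQ(E[p])/\QQ$, so the content is to identify that maximal abelian subextension explicitly.

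First, I would invoke Theorem \ref{thm-exceptional} together with Proposition \ref{prop-sz} to cut down the problem to just two primes: if $p>2$ and the image of $\rho_{E,p}$ is exceptional, then $p=5$ or $p=13$, and $\Im \rho_{E,p}$ is conjugate (inside $\GL(2,\ZZ/p\ZZ)$) to one of the explicit groups $H_5$ or $H_{13}$ listed in that proposition. Since conjugation does not affect the structure of the abelianization, it suffices to compute $H_p/[H_p,H_p]$ for $p=5,13$.

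Next, a direct finite computation on the explicit generators of $H_5$ and $H_{13}$ (this is exactly the calculation already carried out inside the proof of Proposition \ref{prop-exceptional}) gives $H_p/[H_p,H_p] \simeq (\ZZ/p\ZZ)^\times$. Translating back via Galois theory, the maximal abelian subextension $L \subseteq \QQ(E[p])$ satisfies $\Gal(L/\QQ) \simeq (\ZZ/p\ZZ)^\times$, so $[L:\QQ] = p-1$.

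Finally, by the Weil pairing (Proposition \ref{prop-cyclotomic}), $\QQ(\zeta_p) \subseteq \QQ(E[p])$, and since $\QQ(\zeta_p)/\QQ$ is abelian of degree exactly $p-1$, the containment $\QQ(\zeta_p) \subseteq L$ must be an equality. Therefore $K_E(p) = L = \QQ(\zeta_p)$ and $\Gal(K_E(p)/\QQ) \simeq (\ZZ/p\ZZ)^\times$, completing the proof. There is no serious obstacle here; the entire corollary is obtained by packaging the abelianization computation already embedded in Proposition \ref{prop-exceptional} together with the Weil pairing.
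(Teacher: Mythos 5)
Your proposal is correct and matches the paper's own reasoning exactly: the corollary is stated without a separate proof precisely because it falls out of the argument already given for Proposition \ref{prop-exceptional}, namely reduction via Theorem \ref{thm-exceptional} and Proposition \ref{prop-sz} to the cases $p=5,13$ with $\Im\rho_{E,p}$ conjugate to $H_5$ or $H_{13}$, the abelianization computation $H_p/[H_p,H_p]\simeq(\ZZ/p\ZZ)^\times$, and the identification $L_p=\QQ(\zeta_p)$ by comparing degrees using the Weil pairing. You have simply unpacked and re-assembled the relevant steps, which is exactly what is needed.
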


\subsection{Split Cartan} In this subsection we give results in the case when the image is contained in the normalizer of a split Cartan group. We define the split Cartan subgroup of $\GL(2,\Z/p\Z)$ by
$$C_s(p) = \left\{\begin{pmatrix}a&0\\0&b \end{pmatrix}: a,b\in(\ZZ/p\ZZ)^\times \right\}$$ and let $N_s(p) = C_s(p) \cup \begin{pmatrix}0&1\\1&0\end{pmatrix}\cdot C_s(p)$ be its normalizer. Notice that if the image $G$ of $\rho_{E,p}$ is strictly contained in a split Cartan group, then $G$ is abelian, in which case we already know what can happen by our results in Section \ref{sec-abelian}. Thus, we will assume that $G$ is non-abelian.

\begin{thm}\label{prop-main_split} 
If $G$ is a subgroup of $N_s(p)$ such that $\det(G) = (\ZZ/p\ZZ)^\times$, and $G' = [G,G]\neq \{\operatorname{Id}\},$ then $G/G'$ is isomorphic to $(\ZZ/p\ZZ)^\times$, or $(\ZZ/p\ZZ)^\times \times \ZZ/2\ZZ$, or $\ZZ/2(p-1)\ZZ$. Moreover, if $G$ contains an element of zero trace and determinant $-1$, then $G/G'\simeq (\ZZ/p\ZZ)^\times$ or $(\ZZ/p\ZZ)^\times\times \ZZ/2\ZZ$.
\end{thm}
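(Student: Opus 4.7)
The plan is to exploit the index-$2$ inclusion $H := G \cap C_s(p) \subseteq G$ and compute commutators by hand. I would start by observing that $H$ is abelian (as a subgroup of $C_s(p)$), and since $G' \neq \{\operatorname{Id}\}$ the group $G$ is non-abelian, so $G \not\subseteq C_s(p)$; combined with $[N_s(p):C_s(p)] = 2$, this forces $[G:H] = 2$. Pick any $w \in G \setminus H$; it has the anti-diagonal form $w = \smallmat{0}{b}{c}{0}$.

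The key calculation is that for $h = \operatorname{diag}(a,d) \in H$ we have $[w, h] = \operatorname{diag}(d/a,\, a/d)$, and an analogous computation shows that any commutator $[wh_1, wh_2]$ of two anti-diagonal elements also has the form $\operatorname{diag}(s, s^{-1})$, with $s$ determined by $h_1 h_2^{-1}$. Since commutators inside the abelian subgroup $H$ are trivial, this identifies $G' = \{\operatorname{diag}(t, t^{-1}) : t \in T\}$, where $T = \phi(H)$ is the image of the homomorphism $\phi : H \to (\Z/p\Z)^\times$, $\operatorname{diag}(a,d) \mapsto d/a$. Since $\ker \phi = H \cap \Delta$ with $\Delta := \{\operatorname{diag}(x,x) : x \in (\Z/p\Z)^\times\}$ the subgroup of scalars, the counting formula $|H| = |T| \cdot |H \cap \Delta|$ gives
$$|G/G'| = \frac{2|H|}{|T|} = 2\,|H \cap \Delta|.$$

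Next I would use that $\det : G \to (\Z/p\Z)^\times$ is surjective by hypothesis and factors through $G/G'$; hence $p-1$ divides $|G/G'| = 2|H \cap \Delta|$. Combined with $|H \cap \Delta|$ dividing $|\Delta| = p-1$, this forces $|H \cap \Delta| \in \{(p-1)/2,\, p-1\}$ (using $p$ odd), so $|G/G'| \in \{p-1,\, 2(p-1)\}$. In the first case, $G/G'$ is an abelian group surjecting onto $(\Z/p\Z)^\times$ with the same order, hence isomorphic to $(\Z/p\Z)^\times$. In the second, $G/G'$ sits in an extension $0 \to \Z/2\Z \to G/G' \to (\Z/p\Z)^\times \to 0$; since $\operatorname{Ext}^1(\Z/(p-1)\Z, \Z/2\Z) \cong \Z/2\Z$, there are exactly two such abelian extensions, namely $(\Z/p\Z)^\times \times \Z/2\Z$ and $\Z/2(p-1)\Z$.

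For the ``moreover'' claim, the plan is to use Cayley--Hamilton: if $g \in G$ satisfies $\operatorname{tr}(g) = 0$ and $\det(g) = -1$, then $g^2 = I$. Since every element of $G'$ has determinant $1$, we must have $g \notin G'$, so the image $\bar{g} \in G/G'$ has order exactly $2$ and maps under $\det$ to the unique element of order $2$ in $(\Z/p\Z)^\times$. However, in $\Z/2(p-1)\Z$ the unique element of order $2$ is $p-1$, which lies in the kernel of every surjection onto $(\Z/p\Z)^\times$ and hence cannot map to a nontrivial element. Therefore $G/G' \not\cong \Z/2(p-1)\Z$, leaving the two stated possibilities. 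The main obstacle I anticipate is the commutator bookkeeping in the anti-diagonal coset $wH$---specifically, verifying that commutators $[wh_1, wh_2]$ stay inside $\{\operatorname{diag}(t, t^{-1}) : t \in T\}$---after which the remainder is a clean counting argument combined with the classification of abelian extensions.
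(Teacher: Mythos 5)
Your proof is correct and genuinely different from the paper's. Where the paper proceeds indirectly---studying $\overline{\det}\colon G/G'\to(\Z/p\Z)^\times$ and using Lemma~\ref{lem_ker2} to show that the image $\overline{S}$ of $G\cap\SL(2,\Z/p\Z)$ in $G/G'$ has order dividing $2$---you instead compute $G'$ explicitly as $\{\operatorname{diag}(t,t^{-1}):t\in\phi(H)\}$ (your commutator bookkeeping, including the $[wh_1,wh_2]$ case, is correct) and reduce to the counting identity $|G/G'|=2\,|H\cap\Delta|$, which, together with $|H\cap\Delta|\mid p-1$ and $\det$-surjectivity forcing $(p-1)\mid|G/G'|$, pins $|G/G'|$ to $\{\,p-1,\,2(p-1)\,\}$. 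One small refinement: rather than appealing to $\operatorname{Ext}^1(\Z/(p-1),\Z/2)\cong\Z/2$ (distinct extension classes can in principle yield isomorphic middle terms, so that step needs a word of justification), it is cleaner to note directly that an abelian group of order $2(p-1)$ admitting a cyclic quotient of order $p-1$ must have invariant factors $(2,\ p-1)$ or $(2(p-1))$. For the ``moreover'' clause your argument is noticeably shorter than the paper's: Cayley--Hamilton gives $g^2=\operatorname{Id}$, and $\det g=-1$ forces $g\notin G'$, so $\overline{g}$ is an order-$2$ element of $G/G'$ on which $\overline{\det}$ is nontrivial---impossible when $G/G'\simeq\Z/2(p-1)\Z$, since its unique involution lies in the kernel of any index-$2$ quotient. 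The paper instead conjugates the trace-zero, determinant $-1$ element into a normal form (Lemma~\ref{lem_conj}), shows $-\operatorname{Id}\in G'$, and deduces that $G/G'$ cannot contain an element of order $2(p-1)$. Both routes are valid; yours is more computational and self-contained for the first clause and more economical for the second.
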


Before we prove Theorem \ref{prop-main_split}, we will need the  following lemmas.

\begin{lemma}\label{lem_square}\label{lem_ker1}
Let $G$ and $G'=[G,G]$ be as in Theorem \ref{prop-main_split}. Then,
\begin{enumerate} 
\item If $M=\begin{pmatrix} a&0\\ 0&b \end{pmatrix}\in G$, then $N=\begin{pmatrix} b&0\\ 0&a \end{pmatrix}\in G$ also, and $M \equiv N \bmod G'$.
\item Let $H$ be a subgroup of the form $\left\{ \begin{pmatrix}a&0\\0&a^{-1} \end{pmatrix}: a\in(\ZZ/p\ZZ)^\times \right\}\cap G$, and let $\overline{H}$ be the image of $H$ in $G/G'$.
Then, $\overline{H}$ is either trivial or isomorphic to $\ZZ/2\ZZ$.
\item If $\overline{H}$ is non-trivial, and there is a matrix in  $G$ of the form $T_a=\begin{pmatrix} a&0\\ 0&a^{-1} \end{pmatrix}$ where $a$ is not a quadratic residue mod $p$, then $\overline{H}$ is generated by $T_a \bmod G'$.
\end{enumerate}
\end{lemma}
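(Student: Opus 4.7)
The plan for Part (1) is to first note that the hypothesis $G' \neq \{\operatorname{Id}\}$ forces $G \not\subseteq C_s(p)$, because $C_s(p)$ is abelian. So $G$ contains an anti-diagonal element $W = \left(\begin{smallmatrix} 0 & u \\ v & 0 \end{smallmatrix}\right)$, and a direct computation gives $W M W^{-1} = \operatorname{diag}(b, a) = N$ for any $M = \operatorname{diag}(a, b) \in G$. Thus $N \in G$, and since $NM^{-1} = [W, M] \in G'$, we get $N \equiv M \pmod{G'}$.

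For Part (2), I would apply Part (1) to $T_a = \operatorname{diag}(a, a^{-1})$ to see that $T_{a^{-1}} = T_a^{-1}$ lies in $G$ and $T_a^{-1} \equiv T_a \pmod{G'}$, hence $T_a^2 \in G'$. So every element of $\overline{H}$ has order dividing $2$. The map $T_c \mapsto c$ embeds $H$ as a subgroup of the cyclic group $(\Z/p\Z)^\times$, making $H$, and hence $\overline{H}$, cyclic. A cyclic group of exponent dividing $2$ is trivial or isomorphic to $\Z/2\Z$.

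For Part (3), by Part (2) it suffices to show $T_a \notin G'$: then $T_a G'$ is the unique non-trivial element of the order-$2$ group $\overline{H}$ and so generates it. The key step is to identify $G'$ explicitly. Setting $G_d = G \cap C_s(p)$ and fixing any anti-diagonal $W_0 \in G$ (so that $G = G_d \sqcup W_0 G_d$), direct commutator computations for both $[M, W_0 M']$ and $[W_0 M, W_0 M']$ with $M, M' \in G_d$, together with the fact that $G_d$ is abelian and $W_0^2$ is a scalar, yield $G' = \{T_c : c \in P(G_d)\}$, where $P(G_d) = \{y/x : \operatorname{diag}(x,y) \in G_d\}$ is a subgroup of $(\Z/p\Z)^\times$. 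Since $y/x = y^2/(xy) \equiv xy \pmod{\text{squares}}$, the images of $P(G_d)$ and $\det(G_d)$ in $(\Z/p\Z)^\times/\bigl((\Z/p\Z)^\times\bigr)^2$ coincide. The map $T_c \mapsto c\,P(G_d)$ identifies $\overline{H}$ with the $2$-torsion of the cyclic quotient $Q = (\Z/p\Z)^\times/P(G_d)$, and $Q[2]$ is non-trivial if and only if $|Q|$ is even, equivalently $P(G_d)$ is contained in the unique index-$2$ subgroup of $(\Z/p\Z)^\times$, namely the squares. Thus the hypothesis that $\overline{H}$ is non-trivial forces $P(G_d) \subseteq \bigl((\Z/p\Z)^\times\bigr)^2$, and since $a$ is a non-square, $a \notin P(G_d)$, so $T_a \notin G'$.

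The main technical obstacle is the identification $G' = \{T_c : c \in P(G_d)\}$ in Part (3): the two commutator cases must be handled carefully, keeping in mind that $W_0^2$ is a scalar that need not equal the identity. Once this is in place, everything else reduces to the structure of subgroups of the cyclic group $(\Z/p\Z)^\times$, which are classified by their order, making the equivalence between \emph{$\overline{H}$ non-trivial} and \emph{$P(G_d) \subseteq$ squares} an immediate bookkeeping.
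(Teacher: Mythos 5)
Your Parts (1) and (2) are essentially the paper's: for (1) the paper also pulls an anti-diagonal $L$ into $G$ from $G'\ne\{\operatorname{Id}\}$ and lands on the same commutator identity ($N = M\cdot[M,L]^{-1}$ rather than your $N=[W,M]\cdot M$, an inessential difference); (2) is identical.

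Part (3) is a genuinely different route. The paper stays inside $H$: since $H$ is cyclic, write $H=\langle T_d\rangle$, observe $T_a=T_d^{n}$ forces $d$ to be a non-square and $n$ to be odd, and then use $T_d^2\equiv\operatorname{Id}\pmod{G'}$ (which holds because $|\overline H|=2$) to conclude $T_a\equiv T_d\pmod{G'}$, hence both classes generate $\overline H$. You instead compute $G'$ explicitly as $\{T_c: c\in P(G_d)\}$, check that $\overline H$ embeds into the cyclic quotient $Q=(\Z/p\Z)^\times/P(G_d)$ with exponent $2$, use that $Q$ then has even order to force $P(G_d)\subseteq((\Z/p\Z)^\times)^2$, and conclude $a\notin P(G_d)$, i.e.\ $T_a\notin G'$. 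Your identification of $G'$ checks out (each of the three non-trivial commutator types lands in $\{T_c : c\in P(G_d)\}$, and $[M,W_0]$ already produces all of $T_{P(G_d)}$), and it is a real bonus of your approach: you obtain an explicit description of $G'$, whereas the paper's argument never needs it. The paper's argument is shorter and avoids the one cased commutator computation. One small overstatement in your write-up: you only establish $\overline H\hookrightarrow Q[2]$, not an identification $\overline H=Q[2]$ (equality would require every $T_c$ with $c^2\in P(G_d)$ to actually lie in $G$, which is not guaranteed). Fortunately only the inclusion is used in the chain ``$\overline H$ nontrivial $\Rightarrow Q[2]\ne 1\Rightarrow |Q|$ even $\Rightarrow P(G_d)\subseteq$ squares,'' so the proof is unaffected.
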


\begin{proof}
For (1), suppose that $M=\left(\begin{smallmatrix} a&0 \\ 0&b \end{smallmatrix}\right)\in G$. Since $G' \neq \{ \operatorname{Id}\}$, there must also be an element in $G$ of the form $L=\left(\begin{smallmatrix} 0&c\\ d&0 \end{smallmatrix}\right).$ Therefore,
\[[M,L]=\begin{pmatrix} a&0 \\ 0&b \end{pmatrix} \begin{pmatrix} 0&c\\ d&0 \end{pmatrix} \begin{pmatrix} a&0 \\ 0&b \end{pmatrix}^{-1} \begin{pmatrix} 0&c\\ d&0 \end{pmatrix}^{-1}=\begin{pmatrix} ab^{-1}&0 \\ 0&a^{-1}b \end{pmatrix} \in G'.\]
Thus, $N=M\cdot [M,L]^{-1}$ belongs to $G$ also, and $M\equiv N\bmod G'$, as claimed.

For (2), since $H$ is cyclic, the image $\overline{H}$ is also cyclic. Further, part (1) shows that every element of $\overline{H}$ has order dividing two since 
$$\begin{pmatrix} a&0\\ 0&a^{-1} \end{pmatrix}^2 \equiv \begin{pmatrix} a&0\\ 0&a^{-1} \end{pmatrix} \begin{pmatrix} a&0\\ 0&a^{-1} \end{pmatrix} \equiv  \begin{pmatrix} a&0\\ 0&a^{-1} \end{pmatrix} \begin{pmatrix} a^{-1}&0\\ 0&a \end{pmatrix} \equiv \begin{pmatrix} 1&0\\ 0&1 \end{pmatrix}\bmod G'.$$ Thus, $\overline{H}$ is trivial or cyclic of order 2. This shows (2).

For (3), let $\mathcal{N}_p\subset (\Z/p\Z)^\times$ be the set of quadratic non-residues mod $p$. Let $a\in \mathcal{N}_p$ and put $T_a=\begin{pmatrix} a&0\\ 0&a^{-1} \end{pmatrix}$. Let us suppose that $\overline{H}$ is non-trivial (therefore of order $2$ by part (2)), and $T_a\in G$. Since $H$ is cyclic, $H$ is generated by a matrix $T_d$ for some $d\in (\Z/p\Z)^\times$. Notice that $d\in \mathcal{N}_p$, because there is an odd number $n$ with $T_d^n=T_a$ (and therefore $d^n\equiv a \bmod p$) but this would be impossible if $d$ was a square or $n$ was even. Finally, if we write $n=2k+1$, and since $|\overline{H}|=2$, we have $T_d^2\equiv \operatorname{Id} \bmod\, G'$, and  it follows that $T_a = T_d^n = T_d^{2k}T_d \equiv T_d \bmod G'$, and therefore $\overline{H}$ is generated by the class of $T_a$ as well,  as desired.
\end{proof}

\begin{lemma}\label{lem_minusid}
With notation and assumptions as in Proposition \ref{prop-main_split}, and if we assume that $G$ contains a matrix of the form $A_a=\begin{pmatrix} 0&-a\\ a^{-1}& 0 \end{pmatrix}$ for some $a\in(\ZZ/p\ZZ)^\times$, then 
\begin{enumerate} 
\item Let $\overline{H}$ be the image of subgroup $H = \left\{ \begin{pmatrix}d&0\\0&d^{-1} \end{pmatrix}: d\in(\ZZ/p\ZZ)^\times \right\}\cap G$ in $G/G'$. Then, $\overline{H}$ is trivial.

\item The matrix $-\operatorname{Id}$ is a commutator of $G$, i.e., $-\operatorname{Id}\in G'$. 

\end{enumerate} 
\end{lemma}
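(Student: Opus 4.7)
The plan is to extract from the assumption $A_a\in G$ a single commutator $T_h\in G'$ with $h$ not a quadratic residue modulo $p$, and then let Lemma~\ref{lem_square}(3) finish the argument. To set up, put $D:=G\cap C_s(p)$, so that $G = D\sqcup A_aD$ because $A_a\notin C_s(p)$ and $[N_s(p):C_s(p)]=2$. Since $\det(A_a)=1$, the hypothesis $\det(G)=(\ZZ/p\ZZ)^\times$ forces $\det|_D\colon D\to(\ZZ/p\ZZ)^\times$ to be surjective. Fix $d_0=\left(\begin{smallmatrix} e_0 & 0 \\ 0 & f_0\end{smallmatrix}\right)\in D$ with $\det(d_0)=e_0f_0$ equal to some chosen generator $g$ of $(\ZZ/p\ZZ)^\times$, and set $h:=f_0/e_0$.

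The key computation is that conjugation by $A_a$ swaps the diagonal entries: for $d=\left(\begin{smallmatrix} e & 0 \\ 0 & f\end{smallmatrix}\right)\in D$, a direct check (using $A_a^{-1}=-A_a$) gives $A_adA_a^{-1}=\left(\begin{smallmatrix} f & 0 \\ 0 & e\end{smallmatrix}\right)$, so
\[
[A_a,d]= \left(\begin{smallmatrix} f & 0 \\ 0 & e\end{smallmatrix}\right)\left(\begin{smallmatrix} e^{-1} & 0 \\ 0 & f^{-1}\end{smallmatrix}\right)=T_{f/e}.
\]
Applied to $d_0$ this produces $T_h\in G'\subseteq G$; since $T_h$ has the shape defining $H$ in Lemma~\ref{lem_square}, we actually have $T_h\in H$. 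Crucially, $h$ is not a quadratic residue: if $h=f_0/e_0$ were a square then $e_0$ and $f_0$ would share the same quadratic character, forcing $g=e_0f_0$ to be a square, which is impossible for a generator of $(\ZZ/p\ZZ)^\times$.

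With $T_h\in G'\cap H$ and $h$ a non-residue, Lemma~\ref{lem_square}(3) yields (1): if $\overline{H}$ were non-trivial it would be generated by $T_h\bmod G'$, but $T_h\in G'$ makes that generator trivial, a contradiction. Part (2) is then immediate, because $A_a^2=-\operatorname{Id}\in D$ has the form $T_{-1}$, so $-\operatorname{Id}\in H$, and the triviality of $\overline{H}$ forces $-\operatorname{Id}\in G'$. The main obstacle is recognizing that $[A_a,d_0]$, for $d_0$ whose determinant is a generator of $(\ZZ/p\ZZ)^\times$, is exactly the ``$T_{\text{non-residue}}$'' element needed to feed into Lemma~\ref{lem_square}(3); once that link is seen, no further case analysis is required.
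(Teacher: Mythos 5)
Your proof is correct and follows essentially the same route as the paper: conjugate a diagonal element of non-square determinant by $A_a$ to produce a commutator $T_h\in G'$ with $h$ a quadratic non-residue, then feed it into Lemma~\ref{lem_square}(3). The only differences are cosmetic streamlinings—since $\det(A_a)=1$ you may take the witness of determinant $g$ to already lie in $G\cap C_s(p)$, avoiding the paper's two-case split, and you obtain part (2) directly from $A_a^2=-\operatorname{Id}=T_{-1}\in H\subseteq G'$ rather than from the $(p-1)/2$-th power of $T_{gc^2}$—but the key idea is identical.
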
 
\begin{proof}
Let $G$ be a group as in the statement of Proposition \ref{prop-main_split}. Since $\det(G)=(\ZZ/p\ZZ)^\times=\langle g\rangle$, for some primitive root $g\bmod p$, there is a matrix $M\in G\subseteq N_s(p)$ with $\det(M)=g$. Hence, $M=\begin{pmatrix} gc&0\\ 0&c^{-1} \end{pmatrix}$ or $\begin{pmatrix} 0&gc\\ -c^{-1} &0 \end{pmatrix}$ for some $c\in(\ZZ/p\ZZ)^\times$. In the latter case, we define $M'$ by 
$$M'=M\begin{pmatrix} 0&-a\\ a^{-1}& 0 \end{pmatrix}=\begin{pmatrix} gca^{-1}&0\\ 0& ac^{-1} \end{pmatrix}.$$
In particular,
$$MA_a M^{-1}A_a^{-1}=\begin{pmatrix} gc^2&0\\ 0&(gc^2)^{-1} \end{pmatrix}, \text{ and } M'A_a M'^{-1}A_a^{-1}=\begin{pmatrix} gc^2a^{-2}&0\\ 0&(gc^2a^{-2})^{-1} \end{pmatrix}.$$
Thus, following the notation of Lemma \ref{lem_square}, part (3), the matrices $T_{gc^2}$ or $T_{g(ca^{-1})^2}\in G'\subseteq G$ and $gc^2$ , $g(ca^{-1})^2$ are not squares modulo $p$. By Lemma \ref{lem_square}, either $T_{gc^2}$ or $T_{g(ca^{-1})^2}$ generate $\overline{H}$, but they both belong to $G'$, and it follows that $\overline{H}$ must be trivial. This proves (1).

For (2), we note that $T_{d}^{(p-1)/2} = T_{d^{(p-1)/2}}$, and 
$$(gd^2)^{(p-1)/2} \equiv g^{(p-1)/2}d^{(p-1)} \equiv -1 \bmod p$$
for any $d\in(\ZZ/p\ZZ)^\times$. Thus, $-\operatorname{Id} = T_{gc^2}^{(p-1)/2}= T_{gc^2a^{-2}}^{(p-1)/2}\in G'$, as desired.
\end{proof}

\begin{lemma}\label{lem_ker2}
With notation as in Proposition \ref{prop-main_split}, let $\overline{S}$ be the image of the subgroup $S = G\cap\SL_2(\ZZ/p\ZZ)$ in $G/G'$.
Then, $\overline{S}$ is cyclic with order dividing $2$. 
\end{lemma}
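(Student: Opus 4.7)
The plan is to split according to whether $S$ contains any anti-diagonal element of $N_s(p)$, which is the natural dichotomy given the structure of the normalizer. Writing $H = C_s(p) \cap G = \{T_d : T_d \in G\}$ as in Lemma~\ref{lem_square}, every element of $S$ is either of the form $T_d$ (forcing it to lie in $H$) or, since it has determinant $1$, of the form $A_a = \bigl(\begin{smallmatrix} 0 & -a \\ a^{-1} & 0 \end{smallmatrix}\bigr)$.

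First, I would handle the case in which $S$ contains no anti-diagonal matrix. Then $S = S \cap C_s(p) \subseteq H$, and in fact $S = H \cap \SL_2(\Z/p\Z)$, so $\overline{S}$ is a subgroup of $\overline{H}$. But Lemma~\ref{lem_square}(2) already tells us that $\overline{H}$ is either trivial or isomorphic to $\Z/2\Z$, hence $\overline{S}$ is cyclic of order dividing $2$ in this case.

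Next, I would treat the case in which $S$ contains some anti-diagonal matrix $A_a$. Now Lemma~\ref{lem_minusid} applies directly: part~(1) gives that $\overline{H}$ is trivial, so every $T_d \in G$ satisfies $T_d \equiv \operatorname{Id} \bmod G'$, and part~(2) gives $-\operatorname{Id} \in G'$. The key observation is that for any two anti-diagonal elements $A_a, A_b \in S$, a direct computation shows
\[
A_a A_b^{-1} = \begin{pmatrix} 0 & -a \\ a^{-1} & 0 \end{pmatrix}\begin{pmatrix} 0 & b \\ -b^{-1} & 0 \end{pmatrix} = T_{a/b} \in H,
\]
so passing to $G/G'$ we get $\overline{A_a} = \overline{A_b}$. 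Moreover $A_a^2 = -\operatorname{Id} \in G'$, so $\overline{A_a}$ has order dividing $2$. Therefore $\overline{S} = \{\overline{\operatorname{Id}}, \overline{A_a}\}$ is again cyclic of order dividing $2$.

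This is really a bookkeeping argument once Lemmas~\ref{lem_square} and~\ref{lem_minusid} are in hand, so I do not anticipate a genuine obstacle; the only thing to be careful about is to verify that the dichotomy on $S$ (not on $G$) is what is needed, since $G$ always contains an anti-diagonal element (because $G$ is non-abelian and $N_s(p)/C_s(p) \simeq \Z/2\Z$), but $S$ need not. The case split above handles both possibilities uniformly.
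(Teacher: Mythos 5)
Your proof is correct and follows essentially the same route as the paper: the identical dichotomy (whether $S$ contains an anti-diagonal element), the same appeals to Lemmas~\ref{lem_square} and~\ref{lem_minusid}, and the same key computation showing that any two anti-diagonal elements of $S$ agree modulo $G'$ (the paper multiplies $A_aA_b$ while you compute $A_aA_b^{-1}$, a trivial variant). The final closing remark about why the dichotomy is on $S$ rather than $G$ is a nice clarification that the paper leaves implicit.
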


\begin{proof}
First note that $S = \left\{ \begin{pmatrix}a&0\\0&a^{-1} \end{pmatrix}, \begin{pmatrix}0&-a\\a^{-1}&0 \end{pmatrix}: a\in (\ZZ/p\ZZ)^\times \right\}\cap G$. If $S\subseteq H$, with $H$ as in Lemma \ref{lem_ker1}, then $\overline{H}$ is trivial or isomorphic to $\ZZ/2\ZZ$. Otherwise, there is a matrix of the form $\begin{pmatrix}0&-a\\a^{-1}&0 \end{pmatrix}$ in $S\subseteq G$. Hence, Lemma \ref{lem_minusid} implies that $H\subseteq G'$ (i.e., $\overline{H}$ is trivial) and  $-\operatorname{Id}\in G'$. Now suppose that we have any two elements of $S-H$,
$$A = \begin{pmatrix} 0&-a\\ a^{-1}&0\end{pmatrix}\hbox{  and  } B = \begin{pmatrix} 0&-b\\ b^{-1}&0\end{pmatrix}.$$
Then 
$$A^2 = -\operatorname{Id} \in G', \text{ and } AB = \begin{pmatrix} -ab^{-1}&0\\ 0&-a^{-1}b\end{pmatrix}\in H \subseteq G'.$$ Hence $AB\equiv \operatorname{Id}\bmod G'$, and
$$A\equiv A\cdot AB \equiv A^2\cdot B \equiv B \bmod G'.$$
Thus, $\overline{S}$ is of order $2$.
\end{proof}

\begin{lemma}\label{lem_conj}
If $G$ is a subgroup of $N_s(p)$ that contains a matrix $\tau$ of zero trace and determinant $-1$, then there is an element of $\gamma\in C_s(p)$ such that $\gamma\tau \gamma^{-1}$ is one of the following matrices:
\[
\tau_1 = \begin{pmatrix} -1&0\\ 0&1 \end{pmatrix},\hspace{15pt} \tau_2 = \begin{pmatrix} 1&0\\ 0&-1 \end{pmatrix},\hspace{15pt}
\tau_3 = \begin{pmatrix} 0&1\\ 1&0 \end{pmatrix}.
\]
In particular, $G$ is conjugate to a subgroup of $N_s(p)$ that contains one of $\tau_i$, for $i=1$, $2$, or $3$.
\end{lemma}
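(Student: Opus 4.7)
The statement is a concrete calculation using only the structure of $N_s(p)$ and the conditions on $\tau$, so the plan is to proceed by a short case analysis on whether $\tau$ lies in $C_s(p)$ or in the nontrivial coset of $C_s(p)$ in $N_s(p)$.

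First, I would observe that every element of $N_s(p)$ is either diagonal or antidiagonal. So $\tau$ has one of the two forms
\[
\tau=\begin{pmatrix} a & 0 \\ 0 & d \end{pmatrix} \qquad\text{or}\qquad \tau=\begin{pmatrix} 0 & b \\ c & 0 \end{pmatrix}.
\]
In the diagonal case, $\operatorname{tr}(\tau)=a+d=0$ and $\det(\tau)=ad=-1$ force $d=-a$ and $a^2=1$, hence $a=\pm1$. Thus $\tau$ is already equal to $\tau_1$ or $\tau_2$, and we may take $\gamma=\operatorname{Id}$.

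In the antidiagonal case, the trace is automatically zero, and $\det(\tau)=-bc=-1$ gives $c=b^{-1}$, so $\tau=\begin{pmatrix} 0 & b \\ b^{-1} & 0 \end{pmatrix}$. To conjugate this into $\tau_3$, I would try the diagonal matrix $\gamma=\operatorname{diag}(x,y)\in C_s(p)$ and compute
\[
\gamma\tau\gamma^{-1}=\begin{pmatrix} 0 & xby^{-1} \\ yb^{-1}x^{-1} & 0 \end{pmatrix}.
\]
Setting $xby^{-1}=1$ forces $y=bx$, and this single relation also makes the lower-left entry equal to $1$. Picking $x=1$, $y=b$ gives a valid $\gamma\in C_s(p)$ with $\gamma\tau\gamma^{-1}=\tau_3$.

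The final sentence of the lemma follows immediately: replacing $G$ by its conjugate $\gamma G\gamma^{-1}$, which still lies in $N_s(p)$ because $\gamma\in C_s(p)\subseteq N_s(p)$ normalizes $N_s(p)$, gives a subgroup of $N_s(p)$ containing one of the $\tau_i$. I do not anticipate any real obstacle here; the whole argument is elementary linear algebra, and the only thing to be careful about is verifying that the conjugating element $\gamma$ can indeed be chosen inside $C_s(p)$ (rather than in the full $N_s(p)$), so that the ambient group $N_s(p)$ is preserved under the conjugation applied to $G$.
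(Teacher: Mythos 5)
Your proof is correct and follows the same route as the paper: split into the diagonal and antidiagonal cases, note that the trace and determinant conditions force the diagonal case to be $\tau_1$ or $\tau_2$ already, and in the antidiagonal case conjugate by an explicit diagonal matrix in $C_s(p)$ to reach $\tau_3$. The only cosmetic difference is your choice $\gamma = \operatorname{diag}(1,b)$ versus the paper's $\operatorname{diag}(a,1)$ (the paper also writes the conjugation as $\gamma^{-1}\tau\gamma$), which is immaterial.
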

\begin{proof}
If $\tau\in N_s(p)$ has zero trace and determinant $-1$, then it is of the form $\begin{pmatrix} \pm 1&0\\ 0&\mp 1 \end{pmatrix}$ or $\begin{pmatrix} 0&a\\ a^{-1}&0 \end{pmatrix}$ for some $a\in(\ZZ/p\ZZ)^\times$. If it is the latter, then the matrix $\gamma=\begin{pmatrix} a&0\\ 0&1 \end{pmatrix}\in C_s(p)$ satisfies $\gamma^{-1}\tau \gamma = \tau_3$, as claimed. Finally, since $N_s(p)$ is stable under conjugation by an element $\gamma$ of its subgroup $C_s(p)$, the last claim follows.
\end{proof}

We are finally ready to prove Theorem \ref{prop-main_split}.

\begin{proof}[Proof of Theorem \ref{prop-main_split}]
Suppose that $G$ is a subgroup of $N_s(p)$ such that $\det(G) = (\ZZ/p\ZZ)^\times$,  and $G' = [G,G]\neq \{\operatorname{Id}\}$. Since $G'$ is a subgroup of $\SL_2(\ZZ/p\ZZ)$, we know that $\det$ induces a map $\overline{\det}\colon G/G' \to (\ZZ/p\ZZ)^\times$ that is a surjective homomorphism. Moreover, $G/G'$ is abelian. Thus $(G/G_1)/\ker(\overline{\det}) \simeq (\ZZ/p\ZZ)^\times$. But $\ker(\overline{\det}) = \overline{S}$, where $\overline{S}$ is the image of $G\cap \SL(2,\ZZ/p\ZZ)$ in $G/G'$. From Lemma \ref{lem_ker2} we have $\overline{S}$ is trivial or isomorphic to $\ZZ/2\ZZ$. Hence, $G/G'$ is isomorphic to $(\ZZ/p\ZZ)^\times$, or $(\ZZ/p\ZZ)^\times \times \ZZ/2\ZZ$, or $\ZZ/2(p-1)\ZZ$. 

It remains to discard the possibility that $G/G'\simeq \ZZ/2(p-1)\ZZ$, under the assumption that $G$ contains an element with zero trace and determinant $-1$. If this happens, then $\overline{S}\simeq \ZZ/2\ZZ$. If so, then $G/G'$  would contain an element of order $2(p-1)$, however we will show that the order of every element in $G/G'$ divides $p-1$. Indeed, every element of $N_s(p)$ has order dividing $2(p-1)$, so if $G/G'$ had an element of order $2(p-1)$, then $G$ itself would have an element $M$ of exact order $2(p-1)$ such that $M \bmod G'$ also has order $2(p-1)$. Such an element of $N_s(p)$ must be of the form $M=\begin{pmatrix} 0&gc\\ c^{-1}&0\end{pmatrix}$, with $g$ a primitive root and $c\not\equiv 0\bmod p$, so that $M^2=\begin{pmatrix} g&0\\ 0&g\end{pmatrix}$ has order $p-1$. In particular, $M^{(p-1)}=(M^2)^{(p-1)/2}=-\operatorname{Id}$, and since the order of $M \bmod G'$ is $2(p-1)$ we conclude $-\operatorname{Id}$ is not in $G'$.  We will prove that in fact $-\operatorname{Id}\in G'$, which is a contradiction.

Since $-\operatorname{Id}$ belongs to the center of $\GL(2,\FF_p)$, the element $-\operatorname{Id}$ belongs to $G'=[G,G]$ if and only if $-\operatorname{Id}$  belongs to the commutator of any subgroup of $N_s(p)$ that is conjugate to $G$. Thus, by Lemma \ref{lem_conj}, we can assume that $G$ contains an element $\tau=\tau_i$, for $i=1$, $2$, or $3$. If $\tau=\tau_1$ or $\tau_2$, then $M\tau M^{-1}\tau^{-1}=-\operatorname{Id}\in G'$ and we are done. Otherwise, suppose $\tau=\tau_3$. Then, 
$$M\tau M^{-1}\tau^{-1} = \begin{pmatrix} gc^2&0 \\ 0&(gc^2)^{-1}\end{pmatrix}=T_{gc^2},$$
but since $g$ is a primitive root, then $gc^2$ is not a square, and $T_{gc^2}^{(p-1)/2}=-\operatorname{Id}\in G'$.  Hence, we have reached a contradiction and $G/G'\simeq \ZZ/(2(p-1))\ZZ$ is impossible, which concludes the proof of the theorem.
\end{proof}

\begin{corollary}\label{cor:split}
Let $E/\QQ$ be an elliptic curve and  $p>2$ a prime. If $\rho_{E,p}$ is contained in the normalizer of a split Cartan subgroup of $\GL(2,\ZZ/p\ZZ)$ then $K_{E}(p) = \QQ(E[p]) \cap \QQ^{ab} \subseteq \QQ(\zeta_{p},\sqrt{d})$ for some $d\in\ZZ$. Thus, in this case we have that $\Gal(K_{E}(p)/\QQ)$ is isomorphic to $(\ZZ/p\ZZ)^\times$ of $\ZZ/2\ZZ \times (\ZZ/p\ZZ)^\times$. 
\end{corollary}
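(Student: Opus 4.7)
Let $G = \operatorname{Im}(\rho_{E,p}) \subseteq N_s(p)$ and $G' = [G,G]$. By Galois theory, the maximal abelian subextension $K_E(p) = \Q(E[p]) \cap \Q^{ab}$ of $\Q(E[p])/\Q$ corresponds to the commutator subgroup, so $\Gal(K_E(p)/\Q) \cong G/G'$. The Weil pairing together with Proposition \ref{prop-cyclotomic} gives $\Q(\zeta_p) \subseteq K_E(p)$ and $\det(G) = (\Z/p\Z)^\times$. Following the convention established at the start of this subsection, we may assume that $G$ is non-abelian, since the case where $G$ is abelian falls under the scope of Section \ref{sec-abelian}; in particular $G' \neq \{\operatorname{Id}\}$, so the hypotheses of Theorem \ref{prop-main_split} are satisfied.

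Next, I will verify that $G$ contains a matrix of trace $0$ and determinant $-1$. Let $c \in G_\Q$ denote complex conjugation. Then $\rho_{E,p}(c)$ has order dividing $2$, and $\det(\rho_{E,p}(c)) = \chi_p(c) = -1$. Since $p > 2$, this already forces $\rho_{E,p}(c) \neq \operatorname{Id}$, so $\rho_{E,p}(c)$ has exact order $2$ with eigenvalues $\pm 1$ and therefore trace $0$. Applying the final assertion of Theorem \ref{prop-main_split}, we conclude
$$G/G' \cong (\Z/p\Z)^\times \quad \text{or} \quad G/G' \cong (\Z/p\Z)^\times \times \Z/2\Z,$$
ruling out the cyclic option $\Z/2(p-1)\Z$.

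To finish, I translate these two possibilities back into statements about $K_E(p)$. If $G/G' \cong (\Z/p\Z)^\times$, then $[K_E(p):\Q] = p-1$, and since $\Q(\zeta_p) \subseteq K_E(p)$ with the same degree, we get $K_E(p) = \Q(\zeta_p) \subseteq \Q(\zeta_p,\sqrt{d})$ (taking $d=1$). If instead $G/G' \cong (\Z/p\Z)^\times \times \Z/2\Z$, then $K_E(p)/\Q$ is abelian of degree $2(p-1)$ and strictly contains $\Q(\zeta_p)$. Since $K_E(p)$ is abelian over $\Q$, Kronecker--Weber ensures that every quadratic subfield of $K_E(p)$ is of the form $\Q(\sqrt{d})$ with $d \in \Z$. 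The group $(\Z/p\Z)^\times \times \Z/2\Z$ has three index-$2$ subgroups (and hence $K_E(p)$ has three quadratic subfields), only one of which lies in $\Q(\zeta_p)$; thus there is a quadratic subfield $F = \Q(\sqrt{d}) \not\subseteq \Q(\zeta_p)$, and $K_E(p) = \Q(\zeta_p) \cdot F = \Q(\zeta_p,\sqrt{d})$.

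The main step is really the application of Theorem \ref{prop-main_split}; the only subtlety is checking that complex conjugation supplies the required ``zero trace, determinant $-1$'' element, and that the abelian case may be set aside by reference to the earlier sections. The Kronecker--Weber argument at the end is routine once the group-theoretic structure is in hand.
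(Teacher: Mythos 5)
Your proof is correct and follows essentially the same path the paper intends for this corollary, which is stated without proof as an immediate consequence of Theorem \ref{prop-main_split}: identify $\Gal(K_E(p)/\QQ)$ with $G/G'$, note the image of complex conjugation provides the required zero-trace, determinant $-1$ element, and invoke the last assertion of Theorem \ref{prop-main_split} to rule out the cyclic $\ZZ/2(p-1)\ZZ$ case; the concluding Kronecker--Weber step translating the group structure into $K_E(p)\subseteq\QQ(\zeta_p,\sqrt{d})$ is routine and you have it right.
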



\subsection{Non-split Cartan} In this section we give results in the case that the image of the mod $p$ Galois representation is contained a non-abelian subgroup of the normalizer of the non-split Cartan subgroup of $\GL(2,\ZZ/p\ZZ)$. 

Let  $p$ be a fixed prime and let $\varepsilon\in (\Z/p\Z)^\times$ be a fixed quadratic non-residue. We define the non-split Cartan subgroup of $\GL(2,\ZZ/p\ZZ)$ by
$$C_{ns}(p) = \left\{ \begin{pmatrix}a&\varepsilon b\\b&a \end{pmatrix}  
: a,b,\in \ZZ/p\ZZ \hbox{ and }(a,b) \neq (0,0) \right\},$$
and its normalizer in $\GL(2,\ZZ/p\ZZ)$
$$N_{ns}(p) = C_{ns}(p) \cup \begin{pmatrix} -1 & 0 \\ 0 & 1 \end{pmatrix}C_{ns}(p).$$

From \cite[Section 5]{lozano1}, we have that $C_{ns}(p) \simeq \FF_{p^2}^\times$ and $N_{ns}(p) \simeq \langle C_{ns}(p) , c \rangle$ where $c = \begin{pmatrix} -1&0\\0&1  \end{pmatrix}.$  If we fix a matrix $A = \begin{pmatrix} a & \varepsilon b\\ b & a \end{pmatrix}$ such that $C_{ns}(p) = \langle A \rangle$, then $A$ is diagonalizable when considered over the larger field $(\ZZ/p\ZZ)[\sqrt{\varepsilon}]$. That is, $ A = QD Q^{-1}$ with 
$$Q = \begin{pmatrix} \sqrt{\varepsilon} & -\sqrt{\varepsilon}\\ 1&1 \end{pmatrix} \hbox{ and } D  = \begin{pmatrix}a+b\sqrt{\varepsilon}&0\\0&a-b\sqrt{\varepsilon} \end{pmatrix}.$$
A simple computation shows that 
\begin{align}\label{eqn:conj_is_exp}
A^p &= QD^pQ^{-1} = Q\begin{pmatrix} (a+b\sqrt{\varepsilon})^p &0\\0&(a-b\sqrt{\varepsilon})^p \end{pmatrix}Q^{-1 }= Q\begin{pmatrix} (a^p+b^p\sqrt{\varepsilon}^p) &0\\0&a^p-b^p\sqrt{\varepsilon}^p \end{pmatrix}Q^{-1}\\ \nonumber
&= Q\begin{pmatrix} (a-b\sqrt{\varepsilon}) &0\\0&a+b\sqrt{\varepsilon} \end{pmatrix}Q^{-1} = \begin{pmatrix} a & -\varepsilon b\\ -b & a \end{pmatrix} = cAc.
\end{align}
We note that the first equality in the second line above follows from the fact that $$\Gal((\ZZ/p\ZZ)[\sqrt{\varepsilon}]/(\ZZ/p\ZZ))\simeq \ZZ/2\ZZ$$ and is generated by the Frobenius map $x \mapsto x^p$ which maps $\sqrt{\varepsilon}\mapsto -\sqrt{\varepsilon}$ (since $(\sqrt{\varepsilon}^p = \varepsilon^{(p-1)/2}\sqrt{\varepsilon}=-\sqrt{\varepsilon}$ by Euler's criterion, because $\varepsilon$ is a quadratic non-residue). Lastly, we point out that since the map $\det\colon C_{ns}(p) \to (\ZZ/p\ZZ)^\times$ is surjective, and if $\langle A \rangle = C_{ns}(p)$, then $\det(A) = \alpha$ where $\alpha$ is a generator of $(\ZZ/p\ZZ)^\times$. 


As in the previous section (the split case), we may assume that $G$, the image of $\rho_{E,p}$ is non-abelian, since we have treated the abelian case separately in Section \ref{sec-abelian}. Thus, we will assume here that $G'=[G,G]$ is not trivial. 

\begin{lemma}\label{lem-nscartan}
	Let $G$ be a non-abelian subgroup of $N_{cs}(p)$. Then,
	\begin{enumerate}
		\item  $G=\langle H, \tau\rangle$, where $H$ is a subgroup of $C_{ns}(p)$ (therefore cyclic) and $\tau$ is any element of $N_{cs}(p)-C_{ns}(p)$. 
		\item If $\tau \in N_{cs}(p)-C_{ns}(p)$, then $\tau^2 \in C_{ns}(p)^{p+1}\cap H$. 
		\item Fix $\tau \in N_{cs}(p)-C_{ns}(p)$. Then, every element $g\in G$ is of the form $g=h$ or $g=h\tau$, for some $h\in H$. In particular, $\# G = 2\cdot \# H$.
		\item Fix $\tau \in N_{cs}(p)-C_{ns}(p)$. If $h\in H$, then $\tau h \tau^{-1} = h^p$.
		\item Suppose $G$ contains an element $\lambda$ of order $2$ with zero trace and determinant $-1$. Then, $\lambda\in N_{cs}(p)-C_{ns}(p)$ and $G = \langle H, \lambda\rangle$. Further, $G\simeq H\rtimes_\varphi \Z/2\Z$ with respect to the map $\varphi\colon \Z/2\Z \to \Aut(H)$ that sends $\lambda$ to $h\mapsto \lambda\cdot h\cdot \lambda^{-1}=h^p$.
	\end{enumerate}
\end{lemma}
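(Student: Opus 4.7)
The plan rests on three structural facts already established: (i) $C_{ns}(p) \simeq \FF_{p^2}^\times$ is cyclic, (ii) $C_{ns}(p)$ has index two in $N_{ns}(p)$, with $c = \smallmat{-1}{0}{0}{1}$ representing the nontrivial coset, and (iii) equation \eqref{eqn:conj_is_exp}, which identifies conjugation by $c$ on $C_{ns}(p)$ with the Frobenius automorphism $h \mapsto h^p$. Throughout, I would set $H = G \cap C_{ns}(p)$, which is automatically cyclic as a subgroup of a cyclic group.

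For parts (1) and (3), since $G$ is non-abelian it cannot lie inside the abelian group $C_{ns}(p)$, so $G \setminus C_{ns}(p)$ is nonempty, and the identity $[N_{ns}(p) : C_{ns}(p)] = 2$ forces $[G : H] = 2$. This immediately yields $G = H \sqcup H\tau = \langle H, \tau\rangle$ for any $\tau \in G \setminus C_{ns}(p)$, together with $\#G = 2\cdot \#H$. For parts (2) and (4), writing such a $\tau$ as $c\gamma$ with $\gamma \in C_{ns}(p)$ and using the commutativity of $C_{ns}(p)$ together with $c^2 = I$, one has $\tau h \tau^{-1} = c\gamma h \gamma^{-1}c = chc = h^p$ by \eqref{eqn:conj_is_exp}, proving (4); specializing $h = \gamma$ then gives $\tau^2 = (c\gamma c)\gamma = \gamma^{p+1} \in C_{ns}(p)^{p+1} \cap H$, proving (2).

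The only step with any genuine content is part (5), where the main claim to establish is that $\lambda \notin C_{ns}(p)$. I would observe that the cyclic group $C_{ns}(p) \simeq \FF_{p^2}^\times$ has a unique involution, namely $-I$, whose trace is $-2$, nonzero in $\FF_p$ because $p$ is odd (as a quadratic non-residue $\varepsilon$ exists). Since $\operatorname{tr}(\lambda) = 0$, this forces $\lambda \in N_{ns}(p) \setminus C_{ns}(p)$, and part (1) applied with $\tau = \lambda$ then yields $G = \langle H, \lambda\rangle$. The semidirect product structure follows automatically: $H$ is normal (index two), $\langle \lambda\rangle \cap H = \{I\}$ since $\lambda \notin C_{ns}(p) \supseteq H$ and $\lambda^2 = I$, and the conjugation action of $\lambda$ on $H$ is Frobenius by part (4). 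This trace-zero observation is the only subtlety in the entire lemma; everything else is direct bookkeeping with the cyclic and Frobenius structure.
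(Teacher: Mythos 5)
Your proof is correct and follows essentially the same route as the paper's: it rests on the cyclicity of $C_{ns}(p)$, the index-two relation $[N_{ns}(p):C_{ns}(p)]=2$ (which you exploit a bit more directly to get parts (1) and (3) instead of the paper's explicit $A^j$, $A^k$ computation), and the Frobenius identity $chc = h^p$ from equation (5.1). The only cosmetic deviation is in part (5), where the paper rules out $\lambda\in C_{ns}(p)$ by noting that the unique involution $-\operatorname{Id}$ of $C_{ns}(p)$ has determinant $1\neq -1$, whereas you instead observe that its trace $-2$ is nonzero mod $p$; both are equally valid.
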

\begin{proof}
	Let $G$ be a non-abelian subgroup of $N_{cs}(p)$. Let $H=G\cap C_{ns}(p)\subseteq G$. Since $G$ is non-abelian, and $C_{ns}(p)$ is cyclic abelian, it follows that $H$ is cyclic and $H\neq G$, and there is $\tau \in G-H$, hence $\tau\in N_{ns}(p)-C_{ns}(p)$. Now suppose that $\gamma$ is also an element of $G$ not in $H$. Let $\langle A \rangle = C_{ns}(p)$. Then, $\gamma = A^k c$ and $\tau=A^j c$ for some $j, k\geq 0$ and $c$ as above. Thus, 
	$$\gamma \cdot \tau = (A^k c)(A^j c) = A^k (cA^j c)=A^k\cdot A^{jp} = A^{k+jp} \in C_{ns}(p)\cap G = H$$
where we have used Equation (\ref{eqn:conj_is_exp}),	and so $\gamma = h\cdot \tau^{-1} \in \langle H,\tau\rangle$, where $h'=A^{k+jp}$. Hence, $G=\langle H,\tau\rangle$ as we wanted to show. Moreover, $\tau^2 = (A^jc)(A^jc)=A^jA^{pj}=A^{(p+1)j}\in C_{ns}(p)^{p+1}\cap H$. So $\tau^2 = h''$ and $(h'')^{-1}\tau=\tau^{-1}$. Thus, an arbitrary $\gamma\in G-H$ as above can be written as $\gamma =h'\tau^{-1} = h'(h'')^{-1}\tau = h\tau$ for $h=h'(h'')^{-1}\in H$.  
Finally, let $h=A^k\in H$. Then:
$$\tau h \tau^{-1}=\tau A^k\tau^{-1} =A^j c A^k cA^{-j}=A^{j+kp-j}=A^{kp}=h^p,$$
as desired for (4). 

Finally, suppose that $G$ contains an element $\lambda$ as in part (5). Then, $\lambda$ cannot be in $C_{ns}(p)$ because $C_{ns}(p)$ is cyclic of order $p^2-1$ and contains a unique element of order $2$, namely $-\operatorname{Id}$, whose determinant is $1$. Hence, $\lambda \in N_{cs}(p)-C_{ns}(p)$ and our previous work shows that $G=\langle H,\lambda\rangle$ is of order $2\cdot \# H$. Since $\lambda$ is of order $2$, not in $H$, and $\lambda\cdot h\cdot \lambda^{-1}=h^p$ for all $h\in H$, it follows that $G\simeq H\rtimes_\varphi \Z/2\Z$ as claimed.
\end{proof}

\begin{prop}
Let $p$ be a prime and let $G$ be a subgroup of $N_{ns}(p)$ such that $\det(G) = (\ZZ/p\ZZ)^\times$, such that $G$ contains an element $\lambda$ of order $2$, zero trace, and determinant $-1$, and assume that $G' = [G,G]\neq \{{\rm Id}\}$. Then,  
$G/G'\simeq  \ZZ/2\ZZ\times (\ZZ/p\ZZ)^\times.$
\end{prop}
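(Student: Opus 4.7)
The approach is to use the semidirect product structure furnished by Lemma \ref{lem-nscartan}(5), compute $G' = [G,G]$ explicitly, and then extract the structure of the abelianization from the determinant hypothesis.

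Setting $H := G \cap C_{ns}(p)$, Lemma \ref{lem-nscartan}(5) gives $G \simeq H \rtimes_\varphi \langle\lambda\rangle$, where $H$ is cyclic (as a subgroup of $C_{ns}(p) \simeq \FF_{p^2}^\times$) and $\lambda$ acts on $H$ by the $p$-th power map $h \mapsto h^p$. A direct computation of commutators shows that for $h \in H$,
\[ [\lambda, h] = \lambda h \lambda^{-1} h^{-1} = h^p h^{-1} = h^{p-1}, \]
and similarly $[h_1\lambda, h_2\lambda] \in H^{p-1}$; since $H$ itself is abelian, all other commutators are trivial. Thus $G' = H^{p-1}$, a cyclic subgroup of $H$ of order $|H|/\gcd(|H|, p-1)$.

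Next, we determine $G/G'$. Since $G' \subseteq H$, we have $G/G' = (H/G') \cup (H/G')\bar\lambda$. Moreover, in $G/G'$ the conjugation action of $\bar\lambda$ becomes trivial: $\bar\lambda \bar h \bar\lambda^{-1} = \bar h^p = \bar h$, because $\bar h^{p-1} = 1$ in $H/H^{p-1}$. Combined with $\lambda^2 = \operatorname{Id}$ and $\lambda \notin H \supseteq G'$, this gives the direct product decomposition
\[ G/G' \simeq (H/H^{p-1}) \times \ZZ/2\ZZ, \]
whose first factor is cyclic of order $\gcd(|H|, p-1)$.

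Finally, the hypothesis $\det(G) = (\ZZ/p\ZZ)^\times$ should force the first factor to have order $p-1$. The map $\det$ factors through $G/G'$ by Fermat's little theorem, since $\det(H^{p-1}) = \{1\}$, yielding a surjection $G/G' \twoheadrightarrow (\ZZ/p\ZZ)^\times$. Under the isomorphism $C_{ns}(p) \simeq \FF_{p^2}^\times$, $\det|_H$ is the norm map $h \mapsto h^{p+1}$, while $\det(\bar\lambda) = -1$. The main obstacle is extracting $(p-1) \mid |H|$ from this: a priori, surjectivity could also be achieved when $\det(H)$ has index $2$ in $(\ZZ/p\ZZ)^\times$ with $-1 \notin \det(H)$, in which case $\gcd(|H|, p-1) = (p-1)/2$. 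Ruling out this borderline case must be done by a case analysis analogous to the split Cartan argument in Theorem \ref{prop-main_split}, carefully tracking elements of determinant $-1$ in $G$ via the decomposition $\lambda = c\cdot C$ with $C$ in the norm-$1$ subgroup of $C_{ns}(p)$, and exploiting the cyclic structure of $\FF_{p^2}^\times$ to upgrade the weaker divisibility $(p-1)/2 \mid |H|$ to the needed $(p-1) \mid |H|$.
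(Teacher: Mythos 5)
Your structural analysis tracks the paper's own argument exactly: both use the semidirect decomposition of Lemma~\ref{lem-nscartan} to obtain $G'=H^{p-1}$ and then $G/G'\simeq (H/H^{p-1})\times\ZZ/2\ZZ$ with $H/H^{p-1}$ cyclic of order $\gcd(|H|,p-1)$, and both must show this gcd equals $p-1$. You correctly flag the borderline possibility in which $\det(H)$ has index $2$ in $(\ZZ/p\ZZ)^\times$ and $-1\notin\det(H)$, so that $\gcd(|H|,p-1)$ could be $(p-1)/2$, but you do not actually rule it out --- you only assert that it ``must be done by a case analysis analogous to the split Cartan argument.'' That unproved step is the gap.

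Moreover, the gap cannot be filled, because the borderline case really occurs. Take $p=11$ and let $A$ generate $C_{ns}(11)$, which is cyclic of order $120$. Set $H=\langle A^{8}\rangle$, a subgroup of order $15$, and $\lambda=\smallmat{-1}{0}{0}{1}$. Then $G=\langle H,\lambda\rangle\subseteq N_{ns}(11)$ has order $30$; the element $\lambda$ has order $2$, zero trace, and determinant $-1$; $\det(H)$ is the index-$2$ subgroup of squares in $(\ZZ/11\ZZ)^\times$ while $-1$ is a nonsquare, so $\det(G)=(\ZZ/11\ZZ)^\times$; and $G'=H^{10}$ has order $3\neq 1$. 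Yet $G/G'$ has order $10$ and is isomorphic to $(\ZZ/11\ZZ)^\times$, not to $\ZZ/2\ZZ\times(\ZZ/11\ZZ)^\times$, which has order $20$. So the Proposition as stated is in fact false; the correct conclusion must also allow $G/G'\simeq(\ZZ/p\ZZ)^\times$, which is all that Corollary~\ref{cor:non-split} and the downstream applications use. The paper's own proof contains the same flaw as yours: it deduces $|H/H^{p-1}|\geq p-1$ from the existence of an element of order $p-1$ in $(H/H^{p-1})\times\ZZ/2\ZZ$, but when $(p-1)/2$ is odd a cyclic group of order $(p-1)/2$ crossed with $\ZZ/2\ZZ$ already contains such an element. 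Your instinct that this step is delicate was sound; the problem is not a missing routine case analysis but that the asserted isomorphism needs to be weakened.
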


\begin{proof} Let $G$ be a subgroup of $N_{ns}(p)$ such that $\det(G)\simeq (\Z/p\Z)^\times$. By Lemma \ref{lem-nscartan}, we have $G=\langle H, \tau \rangle$, with $H\subseteq C_{ns}(p)=\langle A \rangle$ and $\tau\in N_{cs}(p)-C_{ns}(p)$. Thus, $H=\langle A^{k_0} \rangle$ for some divisor $k_0$ of $p^2-1$. Let $\tau = A^jc$ for some $j\geq 0$. Note that
	$$[A^{k_0},\tau]=A^{k_0}\tau A^{-k_0}\tau^{-1}=A^{k_0-k_0p}=A^{-(p-1)k_0}\in H^{p-1}.$$
	where we have used Lemma \ref{lem-nscartan}, part (4), and since $H=\langle A^{k_0}\rangle $, it follows that $H^{p-1}=\langle [A^{k_0},\tau]\rangle \subseteq G'=[G,G]$.  Further, we claim that $G'=H^{p-1}$. In order to show this, it suffices to show that $G/H^{p-1}$ is abelian. Indeed, $G=H\rtimes \langle \lambda \rangle$ by Lemma \ref{lem-nscartan}, part (5), and in $G/H^{p-1}$ we have $\lambda\cdot h \cdot \lambda^{-1} =h^p \equiv h \bmod H^{p-1}$, for all $h\in H$. Thus, $\lambda h \equiv  h\lambda \bmod H^{p-1}$, and $G/H^{p-1}$ is abelian. Therefore, $G'=H^{p-1}$. 
	
	Finally, note that $H/H^{p-1}$ injects into $C_{ns}(p)/C_{ns}(p)^{p-1}\simeq (\Z/p\Z)^\times$, where the last isomorphism comes from the fact that $C_{ns}(p)$ is cyclic of order $p^2-1$. Thus, $H/H^{p-1}$ is at most of size $p-1$. Moreover:
	$$G/G' = G/H^{p-1} = (H\rtimes \langle \lambda \rangle)/H^{p-1} \simeq (H/H^{p-1})\times \Z/2\Z.$$
	Further, since $G'\subseteq \SL(2,\Z/p\Z)$, it follows that $\det\colon G/G'\to (\Z/p\Z)^\times$ is also surjective. This implies that $H/H^{p-1}\times \Z/2\Z$ has an element of order $p-1$, and therefore $H/H^{p-1}$ must be of size at least $p-1$. Hence, $H/H^{p-1}\simeq (\Z/p\Z)^\times$ and $G/G'\simeq (\Z/p\Z)^\times \times \Z/2\Z$ as desired.
\end{proof}

\begin{corollary}\label{cor:non-split}
Let $E/\QQ$ be an elliptic curve and  $p>2$ a prime. If $\rho_{E,p}$ is contained in the normalizer of a non-split Cartan subgroup of $\GL(2,\ZZ/p\ZZ)$ then $K_{E}(p) = \QQ(E[p]) \cap \QQ^{ab} \subseteq \QQ(\zeta_{p},\sqrt{d})$ for some $d\in\ZZ$. Thus, in this case we have that $\Gal(K_{E}(P)/\QQ)$ is isomorphic to $(\ZZ/p\ZZ)^\times$ of $\ZZ/2\ZZ \times (\ZZ/p\ZZ)^\times$. 
\end{corollary}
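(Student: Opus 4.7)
The plan is to translate the claim through the Galois correspondence into a statement about the maximal abelian quotient of $G=\Im\rho_{E,p}\subseteq N_{ns}(p)$, and then invoke the preceding proposition. Since $K_E(p)=\Q(E[p])\cap \Q^{\mathrm{ab}}$ is the maximal abelian subextension of $\Q(E[p])/\Q$, the Galois correspondence yields $\Gal(K_E(p)/\Q)\simeq G/G'$.

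If $G$ is abelian, then $\Q(E[p])$ is itself abelian over $\Q$, and Theorem \ref{thm:LR+GJ} forces $p\in\{3,5\}$; inspection of the listed Galois groups, together with the containment $\Q(\zeta_p)\subseteq \Q(E[p])$ and the observation that for $p\equiv 1\bmod 4$ no abelian subgroup of $N_{ns}(p)$ can carry the full cyclotomic determinant (a routine consequence of Lemma \ref{lem-nscartan}(4): $H=G\cap C_{ns}(p)$ must land in the subgroup $\mathbb{F}_p^\times\subset C_{ns}(p)$, so $|G|\le 2(p-1)$ and a short determinant computation pins down the structure), yields the conclusion directly.

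Assume henceforth $G$ is non-abelian, so $G'\neq\{\operatorname{Id}\}$. The plan is to check the two remaining hypotheses of the preceding proposition and apply it. Surjectivity $\det(G)=(\Z/p\Z)^\times$ is Proposition \ref{prop-cyclotomic}, since $\det\circ\rho_{E,p}=\chi_p$ is the mod-$p$ cyclotomic character and $\Q(\zeta_p)/\Q$ has degree $p-1$. For the required involution, set $\lambda=\rho_{E,p}(c)$ for a complex conjugation $c\in G_\Q$; then $\det(\lambda)=\chi_p(c)=-1$, so $\lambda\neq\pm\operatorname{Id}$ because $p>2$. Combined with $\lambda^2=\operatorname{Id}$, this forces the minimal polynomial of $\lambda$ to equal $x^2-1$, so $\lambda$ has eigenvalues $\pm 1$, trace $0$, determinant $-1$, and order $2$.

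Applying the preceding proposition now yields $G/G'\simeq \Z/2\Z\times(\Z/p\Z)^\times$. Since $\Q(\zeta_p)\subseteq K_E(p)$ by the Weil pairing, and the determinant induces the restriction $\Gal(K_E(p)/\Q)\twoheadrightarrow\Gal(\Q(\zeta_p)/\Q)$, the complementary $\Z/2\Z$ factor cuts out a (possibly trivial) quadratic subfield $\Q(\sqrt d)\subseteq K_E(p)$, and $K_E(p)=\Q(\zeta_p,\sqrt d)$ follows by compositum considerations. The only real input beyond standard Galois-theoretic bookkeeping is the construction of the involution $\lambda$; this is precisely where the hypothesis that $E$ is defined over $\Q$ (rather than a more general number field) enters, and it is the step that allows the preceding proposition to engage.
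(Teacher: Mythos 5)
Your proof is correct in its main line and matches the paper's implicit approach: the corollary is stated immediately after the non-split Cartan proposition with no separate proof, and your argument supplies exactly the two missing ingredients needed to invoke it --- surjectivity of the determinant (Weil pairing) and the existence of the involution $\lambda$ of trace $0$ and determinant $-1$ coming from complex conjugation (whose characterization via the minimal polynomial $x^2-1$ is verified correctly). The abelian case is, in the paper, delegated entirely to Section~\ref{sec-abelian} (``we may assume that $G$\dots is non-abelian, since we have treated the abelian case separately''), so your decision to dispatch it via Theorem~\ref{thm:LR+GJ} is consistent.

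One small repair: the parenthetical claim that ``for $p\equiv 1\bmod 4$ no abelian subgroup of $N_{ns}(p)$ can carry the full cyclotomic determinant'' is not true as stated --- $C_{ns}(p)$ itself is an abelian (cyclic) subgroup of $N_{ns}(p)$ with surjective determinant. What you actually need, and what the parenthetical argument does establish, is the weaker claim that an abelian $G\subseteq N_{ns}(p)$ with $G\not\subseteq C_{ns}(p)$ has $G\cap C_{ns}(p)\subseteq \FF_p^\times\cdot\operatorname{Id}$ by Lemma~\ref{lem-nscartan}(4), hence $|G|\le 2(p-1)$ and in particular $G$ has a cyclic subgroup of index $\le 2$; if instead $G\subseteq C_{ns}(p)$ then $G$ is outright cyclic. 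Either way $G$ has a cyclic subgroup of index at most $2$, which already rules out $(\ZZ/4\ZZ)^2$ for $p=5$ (order $16 > 8$, not cyclic); combined with the list in Theorem~\ref{thm:LR+GJ} this pins $G$ to $(\ZZ/p\ZZ)^\times$ or $\ZZ/2\ZZ\times(\ZZ/p\ZZ)^\times$ for $p\in\{3,5\}$. No appeal to the parity of $(p-1)/2$ is needed.
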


\subsection{Summary of the results of this section}

Before ending this section we give a summary the information in Corollaries \ref{cor:surj}, \ref{cor:borel}, \ref{cor:split}, and \ref{cor:non-split} in a single place. 

\begin{prop}\label{prop:summary}
Let $E/\QQ$ be an elliptic curve, $p>2$ a prime, and $K_{E}(p) = \QQ(E[p])\cap\QQ^{ab}$. Then, 
$$\Gal(K_{E}(p)/\QQ)\simeq (\ZZ/p\ZZ)^\times \times C$$
where $C$ is a cyclic group of order dividing $p-1$. Moreover, $\#C>2$ only when $E$ has a $p$-isogeny (i.e., if the image of $\rho_{E,p}$ is contained in a Borel subgroup). 
\end{prop}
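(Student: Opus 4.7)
The plan is to invoke Proposition \ref{prop:max_groups} to split into the five standard cases for the image $G = \Im(\rho_{E,p}) \subseteq \GL(2,\Z/p\Z)$: (i) $G = \GL(2,\Z/p\Z)$, (ii) $G$ contained in a Borel, (iii) $G$ contained in the normalizer of a split Cartan, (iv) $G$ contained in the normalizer of a non-split Cartan, or (v) $G$ contained in an exceptional subgroup. Each case has already been handled in Corollaries \ref{cor:surj}, \ref{cor:borel}, \ref{cor:split}, \ref{cor:non-split}, and in the exceptional case via Proposition \ref{prop-exceptional}, so the proof is essentially a bookkeeping argument that verifies the uniform statement in each case.

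First I would recall that $K_E(p)$ corresponds, via Galois theory, to the maximal abelian quotient of $G$, so $\Gal(K_E(p)/\Q) \simeq G/[G,G]$. Since the Weil pairing (Proposition \ref{prop-cyclotomic}) forces $\Q(\zeta_p) \subseteq K_E(p)$, there is always a surjection $\Gal(K_E(p)/\Q) \twoheadrightarrow (\Z/p\Z)^\times$ induced by the determinant character, and $[G,G] \subseteq \SL(2,\Z/p\Z)$ ensures this determinant map factors through $G/[G,G]$. Thus in all cases, $\Gal(K_E(p)/\Q)$ sits in a short exact sequence $1 \to C \to \Gal(K_E(p)/\Q) \to (\Z/p\Z)^\times \to 1$ where $C$ is the image of $G \cap \SL(2,\Z/p\Z)$ in $G/[G,G]$.

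Next, I would walk through the five cases, citing the corresponding corollary in each: in the surjective case, Corollary \ref{cor:surj} gives $K_E(p) = \Q(\zeta_p)$ so $C$ is trivial; in the Borel case, Corollary \ref{cor:borel} gives $C$ cyclic of order dividing $p-1$ (potentially of size larger than $2$); in the exceptional case, Proposition \ref{prop-exceptional} shows $K_E(p) = \Q(\zeta_p)$ so again $C$ is trivial; in the split Cartan and non-split Cartan cases, Corollaries \ref{cor:split} and \ref{cor:non-split} show $C$ is trivial or of order $2$. The key observation for splitting off $(\Z/p\Z)^\times$ as a direct factor (rather than only exhibiting an extension) is that in every non-Borel case $C$ has order at most $2$, so the short exact sequence automatically splits because $(\Z/p\Z)^\times$ is cyclic of even order $p-1$ and the $2$-torsion subgroup of $(\Z/p\Z)^\times$ gives a section; in the Borel case the direct product structure is built directly into Corollary \ref{cor:borel} via $G_d \simeq (\Z/p\Z)^\times \times (\Z/p\Z)^\times$.

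The main obstacle, if any, is purely cosmetic: making sure that the direct product decomposition $(\Z/p\Z)^\times \times C$ is stated consistently in all four corollaries, since the non-Borel cases present $C$ only as a $\Z/2\Z$-extension of $(\Z/p\Z)^\times$ rather than literally a direct product. This is resolved by the remark that for $p > 2$ any quadratic extension of $(\Z/p\Z)^\times$ inside an abelian group splits (because $(\Z/p\Z)^\times$ contains a unique element of order $2$ giving the required section). Finally, I would close by pointing out the ``moreover'' clause: the analysis above shows that $\#C > 2$ can occur only in case (ii), which by definition is exactly when $E/\Q$ admits a rational $p$-isogeny.
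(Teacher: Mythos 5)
Your proposal follows the same bookkeeping structure that the paper uses: Proposition \ref{prop:summary} is stated in the text as a summary of Corollaries \ref{cor:surj}, \ref{cor:borel}, \ref{cor:split}, and \ref{cor:non-split} (together with the exceptional case in Prop.~\ref{prop-exceptional}), and you invoke exactly these. That part is correct and matches the paper.

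However, the ``cosmetic'' remark you add to justify the direct product decomposition is false. You claim that for $p>2$ any abelian group surjecting onto $(\ZZ/p\ZZ)^\times$ with a kernel of order $2$ automatically splits as $(\ZZ/p\ZZ)^\times\times\ZZ/2\ZZ$. But $\ZZ/2(p-1)\ZZ$ is a counterexample: it is abelian of order $2(p-1)$, it surjects onto $\ZZ/(p-1)\ZZ\simeq(\ZZ/p\ZZ)^\times$ with kernel of order $2$, and for $p>2$ the extension is non-split since $\gcd(2,p-1)=2$ and $\ZZ/2(p-1)\ZZ\not\simeq\ZZ/(p-1)\ZZ\times\ZZ/2\ZZ$. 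The element of order $2$ in $(\ZZ/p\ZZ)^\times$ does not give a section of the quotient map (sections live in the extension, not in the quotient). In the Cartan cases the splitting is \emph{not} a consequence of abstract abelian group theory; it is the substantive content of Theorem \ref{prop-main_split} (and its non-split analogue), which rules out the $\ZZ/2(p-1)\ZZ$ possibility precisely by using the existence of an element in $\Im\rho_{E,p}$ of zero trace and determinant $-1$ --- the image of complex conjugation. In your write-up you should simply cite Corollaries \ref{cor:split} and \ref{cor:non-split} as already giving the split form (they do, because they are stated as consequences of that theorem), and drop the false ``automatic splitting'' claim; as written it substitutes a wrong argument for one that genuinely needs the Cartan analysis.
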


Note that Prop. \ref{prop:summary} provides a proof of parts (2) and (3) of Theorem \ref{thm:main_horizontal}, so in Section \ref{sec:horizontal} we will just need to prove part (1).

\section{The proof of Theorem \ref{thm:main_horizontal}}\label{sec:horizontal}

In an attempt to simplify the proof of Theorem \ref{thm:main_horizontal} we start this section by proving a few lemmas. 

\begin{lemma}\label{lem:3=q}
For every elliptic curve $E/\QQ$ and prime $q>3$, $\QQ(E[3])\cap \Q(\zeta_q)$ is at most quadratic. Thus, $\QQ(E[3]) \neq \QQ(E[q])$.
\end{lemma}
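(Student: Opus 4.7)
The plan is to apply Proposition \ref{prop:summary} with $p=3$, which immediately gives strong constraints on the maximal abelian subextension $K_E(3)=\Q(E[3])\cap \Q^{\text{ab}}$. Specifically, for $p=3$ we have $\Gal(K_E(3)/\Q)\simeq (\Z/3\Z)^\times \times C \simeq \Z/2\Z\times C$ where $C$ is cyclic of order dividing $p-1=2$. In particular $\Gal(K_E(3)/\Q)$ has exponent $2$.

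Now I would argue as follows. Since $\Q(\zeta_q)\subseteq \Q^{\text{ab}}$, the intersection $F:=\Q(E[3])\cap \Q(\zeta_q)$ satisfies $F\subseteq K_E(3)$. On the other hand, $F/\Q$ is a subextension of the cyclic extension $\Q(\zeta_q)/\Q$ (of degree $q-1$), hence $\Gal(F/\Q)$ is cyclic. But as a subquotient of $\Gal(K_E(3)/\Q)$, the group $\Gal(F/\Q)$ also has exponent dividing $2$. A cyclic group of exponent at most $2$ has order at most $2$, so $[F:\Q]\leq 2$, which is the first claim.

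For the second claim, suppose for a contradiction that $\Q(E[3])=\Q(E[q])$ for some prime $q>3$. By Proposition \ref{prop-cyclotomic} (the Weil pairing), $\Q(\zeta_q)\subseteq \Q(E[q])=\Q(E[3])$, hence $\Q(E[3])\cap \Q(\zeta_q)=\Q(\zeta_q)$. By what we just proved, this intersection is at most quadratic, forcing $q-1=[\Q(\zeta_q):\Q]\leq 2$ and therefore $q\leq 3$, contradicting $q>3$.

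The argument is essentially a direct consequence of Proposition \ref{prop:summary}, so there is no real obstacle; the only care needed is to correctly use that a subfield of a cyclic extension is cyclic, combined with the fact that the full abelian part $K_E(3)/\Q$ has exponent $2$ (which in turn relies on the case analysis of the maximal subgroups of $\GL(2,\Z/3\Z)$ carried out in Section \ref{sec:abelian_exts}).
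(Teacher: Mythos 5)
Your proof is correct and takes essentially the same approach as the paper: both hinge on Proposition \ref{prop:summary} for $p=3$, which shows that $K_E(3)=\Q(E[3])\cap\Q^{\text{ab}}$ has Galois group of exponent~$2$. The only cosmetic difference is the final step: the paper observes that $K_E(3)$ is at most biquadratic of the form $F(\zeta_3)$ and uses $\Q(\zeta_3)\cap\Q(\zeta_q)=\Q$, whereas you note that $\Gal(\Q(\zeta_q)/\Q)$ is cyclic, so $\Gal\bigl(\Q(E[3])\cap\Q(\zeta_q)/\Q\bigr)$ is cyclic of exponent~$2$ and thus of order at most~$2$ --- both reach the same bound in one line.
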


\begin{proof}
	Let $E/\Q$ be an elliptic curve, and let $q>3$ be a prime. From Proposition \ref{prop:summary}, we know that the largest that $K_E(3) = \QQ(E[3]) \cap \QQ^{ab}$ is at most biquadratic, of the form $F(\zeta_3)$ for some quadratic field $F/\Q$. Further, since $q>3$ we have $\QQ(\zeta_3) \cap \QQ(\zeta_q) = \QQ$ and $\Q(\zeta_3)$ is quadratic, so $\QQ(E[3])\cap \QQ(\zeta_q)$ is at most quadratic.

For the second part of the statement, suppose towards a contradiction that $\QQ(E[3]) = \QQ(E[q])$. Then, from the first part of the statement, it follows that $\Q(\zeta_q)\subseteq \Q(E[q])=\Q(E[3])$ must be a quadratic subfield of $\Q(E[3])$. Since the degree of $\QQ(\zeta_q)/\QQ$ is $q-1$, this means that $q = 3$ contradicting the assumption that $q>3$.
\end{proof}

\begin{lemma}\label{lem:4=q}
For every elliptic curve $E/\QQ$ and prime $q>2$, such that $\QQ(E[4])/\Q$ is a non-abelian extension, we have $\QQ(E[4]) \neq \QQ(E[q])$.
\end{lemma}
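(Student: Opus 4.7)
The plan is to argue by contradiction: suppose $\QQ(E[4])=\QQ(E[q])=L$ for some odd prime $q$ with $G:=\Gal(L/\Q)$ non-abelian. First I would narrow down $q$ by combining the Weil pairing with structural bounds on $\GL(2,\Z/4\Z)$. The pairing gives $\Q(\zeta_{4q})\subseteq L$, so $2(q-1)$ divides $|G|\leq 96$; moreover, from the exact sequence $1\to K\to \GL(2,\Z/4\Z)\to S_3\to 1$ with $K\simeq(\Z/2\Z)^4$, a short case check shows every element of $\GL(2,\Z/4\Z)$ has order in $\{1,2,3,4,6\}$. Therefore the exponent of $\GL(2,\Z/4\Z)$ divides $12$; since $G^{\mathrm{ab}}\supseteq (\Z/q\Z)^\times=\Z/(q-1)\Z$ is a quotient of $G$, its exponent also divides $12$, giving $q-1\mid 12$ and $q\in\{3,5,7,13\}$.

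Next I would sharpen the degree bound. The subgroup $\Gal(L/\Q(E[2]))$ embeds into $K\simeq(\Z/2\Z)^4$ via $\rho_{E,4}$, and into $\GL(2,\F_q)$ via $\rho_{E,q}$, hence it is an elementary abelian $2$-subgroup of $\GL(2,\F_q)$. For $q$ odd, two commuting involutions in $\GL(2,\F_q)$ share a common eigenbasis, so the maximal rank of such a subgroup is $2$. This yields $[L:\Q(E[2])]\leq 4$, and combined with $[\Q(E[2]):\Q]\leq 6$ we obtain $[L:\Q]\leq 24$.

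With this in hand I would dispose of $q=13,7,5$ in turn. For $q=13$, $[\Q(\zeta_{52}):\Q]=24=[L:\Q]$ forces $L=\Q(\zeta_{52})$, which is abelian---contradiction. For $q=7$, $[L:\Q]\in\{12,24\}$: the first forces $L=\Q(\zeta_{28})$, abelian; in the second, $\Q(E[2])$ has Galois group $S_3$ and the compositum formula yields $[\Q(\zeta_{28})\cap\Q(E[2]):\Q]=3$, but cubic subfields of an $S_3$-extension are non-normal and cannot lie in the abelian extension $\Q(\zeta_{28})$. For $q=5$, $[L:\Q]\in\{8,24\}$; the first again forces $L=\Q(\zeta_{20})$, abelian. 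In the second I would apply Proposition \ref{prop:max_groups} to $\Im\rho_{E,5}$: size rules out Borel ($24\nmid 80$) and split Cartan normalizer ($24\nmid 32$); the exceptional case gives $G^{\mathrm{ab}}\simeq(\Z/5\Z)^\times$, so $\Q(i)\not\subseteq K_E(5)$ (since the unique quadratic subfield of $\Q(\zeta_5)$ is real), contradicting $\Q(i)\subseteq L\cap\Q^{\mathrm{ab}}$. The only remaining possibility is that $G$ is an index-$2$ subgroup of $N_{ns}(5)$ of order $48$, and these are exactly the cyclic $\Z/24\Z$ (abelian), the dihedral group of order $24$, and the dicyclic group $\mathrm{Dic}_6$; but the latter two contain elements of orders $12$ and $8$, which are absent from $\GL(2,\Z/4\Z)$---contradiction.

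The main obstacle is $q=3$. Here $|G|\in\{8,12,16,24\}$, and the requirement $\Q(\zeta_{12})\subseteq K_E(3)$ together with Proposition \ref{prop:summary} forces $G^{\mathrm{ab}}\simeq(\Z/2\Z)^2$, narrowing the mod-$3$ image to $D_4$, $Q_8$, $N_{ns}(3)$ (order $16$), or $S_3\times\Z/2\Z$ (order $12$, the full Borel). The groups $Q_8$ and $N_{ns}(3)$ are eliminated: any homomorphism $Q_8\to\GL(2,\F_2)\simeq S_3$ is trivial (as $S_3$ has no order-$4$ elements), forcing $Q_8$ into the abelian kernel $K$; and $N_{ns}(3)$ contains an element of order $8$, absent in $\GL(2,\Z/4\Z)$. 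To finish off the surviving candidates $D_4$ and $S_3\times\Z/2\Z$, I would invoke the classification of elliptic curves over $\Q$ with $\Q(E[2])\subseteq\Q(E[3])$ due to Brau--Jones \cite{brau} and Morrow \cite{morrow} cited in the introduction; for each curve in those families one checks directly that $\Q(E[4])$ strictly enlarges $\Q(E[3])$, typically because the precise quadratic subfield of $\Q(E[2])$ forced by the entanglement is incompatible with the containment $\Q(i)\subseteq\Q(E[3])$ that $\Q(E[4])=\Q(E[3])$ would require.
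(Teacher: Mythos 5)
The proposal departs substantially from the paper's proof. The paper reduces directly to a finite search over subgroups of $\GL(2,\ZZ/4q\ZZ)$ admitting isomorphic reductions mod $4$ and mod $q$ with full determinant and a complex conjugation, finds $q=3$ is the only survivor, and then kills $q=3$ with a fiber-product argument: the two genus-$9$ curves $X_{H_1},X_{H_2}$ are replaced by $X_s^+(3)\times_j X_{10}$ (both genus $0$), yielding an equation that maps to the rank-zero curve $y^2=x^3-1$. Your route is more structural: you bound $[L:\Q]\leq 24$ by observing that $\Gal(L/\Q(E[2]))$ lies in the kernel of $\GL(2,\ZZ/4\ZZ)\to\GL(2,\ZZ/2\ZZ)$ and in $\GL(2,\FF_q)$, hence is an elementary abelian $2$-group of rank at most $2$ by simultaneous diagonalization. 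That is a genuinely nice reduction and your dispatch of $q=13,7,5$ along those lines is essentially correct (your exponent bound $q-1\mid 12$ is weaker than the paper's $q-1\in\{1,2,3,4,6\}$, which is why $q=13$ appears for you; that is harmless extra work).

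However, the $q=3$ case is not actually closed by your argument, and this is precisely where almost all of the paper's work is concentrated. After your filtering you are left with two live mod-$3$ images, $N_s(3)\simeq D_4$ and the full Borel $S_3\times\ZZ/2\ZZ$, and at that point you appeal to the Brau--Jones/Morrow classification of curves with $\QQ(E[2])\subseteq\QQ(E[3])$ and say ``one checks directly that $\QQ(E[4])$ strictly enlarges $\QQ(E[3])$.'' That does not constitute a proof: those classifications produce infinite one-parameter families of $j$-invariants, and deciding for which members $\QQ(E[4])=\QQ(E[3])$ is itself a rational-points problem on a curve --- which is exactly what the paper solves via the identity $j_{X_s^+(3)}(t)=j_{X_{10}}(s)$ leading to $S^3+SU^2=T^3$, an elliptic curve with only trivial rational points. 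Nothing in your sketch performs (or replaces) this step, so the hardest case remains open.

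Two smaller slips worth flagging, though neither is fatal. The claim that ``any homomorphism $Q_8\to S_3$ is trivial'' is false --- $Q_8$ surjects onto $\ZZ/2\ZZ\subset S_3$; the $Q_8$ case is still eliminable, but by noting that the reduction kernel $\Gal(L/\Q(E[2]))$ would then be an order-$4$ elementary abelian $2$-group inside $Q_8$, which has only one involution. And the three index-$2$ subgroups of $N_{ns}(5)$ are not $\ZZ/24\ZZ$, $D_{12}$, and $\mathrm{Dic}_6$; the two non-abelian ones are $\ZZ/4\ZZ\times S_3$ and a non-dicyclic group containing an element of order $8$. Your conclusion still stands because both contain elements of order $>6$, which $\GL(2,\ZZ/4\ZZ)$ lacks, but the stated group identifications should be corrected.
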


\begin{proof}
Suppose towards a contradiction that there are an elliptic curve $E/\QQ$ and a prime $q>2$ such that $\QQ(E[4]) = \QQ(E[q])$. In particular, $\Q(\zeta_q)\subseteq \QQ(E[4])$.

First note that the order of elements in $\GL(2,\ZZ/4\ZZ)$ are  $1,2,3,4,$ or $6$. Thus, if $\QQ(\zeta_q)\subset\QQ(E[4])$ and $G=G_4  =  \Im\rho_{E,4}$ it must be that  $G/[G,G]$ has an elements of order $q-1$. Since the order of an element in $G/[G,G]$ divides the order of a representative in $G$, we know that $q-1\in\{1,2,3,4,6\}$ and thus $q\in\{2,3,5,7\}$. Based on the assumptions $q\neq 2$ and so $q = 3$, $5$, or $7$.  

Now, since $\Q(E[4])=\Q(E[q])$, it follows that $\Q(E[4q])$ is also the same field. If we let $G_n = \Im\rho_{E,n}$, then we must have that $\Gal(\Q(E[4q])/\Q)\simeq G_{4q}\subseteq \GL(2,\Z/4q\Z)$, and the natural reduction maps $G_{4q}\to G_4$ and $G_{4q}\to G_q$ are isomorphisms. In addition $\det(G_{4q}) = (\Z/4q\Z)^\times$ and there is a matrix in $G_{4q}$ with zero trace and determinant $-1$ (namely, the image of a complex conjugation via $\rho_{E,4q}\colon G_\Q \to \GL(2,\Z/4\Z)$; see \cite[Remark 3.1.4]{Sutherland2}). Further, we have assumed that $G_4$ and therefore $G_{4q}$ are non-abelian.  
A search for such subgroups of $\GL(2,\ZZ/4q\ZZ)$ yields that there are none for $q=5$ or $7$ so $q$ must be 3. The search for subgroups in $\GL(2,\ZZ/12\ZZ)$ yields two possible maximal groups, call them $H_1$ and $H_2$. If we let $\pi_3\colon\GL(2,\ZZ/12\ZZ) \to \GL(2,\ZZ/3\ZZ)$ and $\pi_4\colon\GL(2,\ZZ/12\ZZ) \to \GL(2,\ZZ/4\ZZ)$, the we see that $\pi_3(H_1) = \pi_3(H_2) = N_s(3)$ and 
$$\pi_4(H_1) = \left\langle \begin{pmatrix} 3&3 \\ 0&1 \end{pmatrix}, \begin{pmatrix} 1&3 \\ 2&1 \end{pmatrix} \right\rangle\hbox{ and }\pi_4(H_2) = \left\langle \begin{pmatrix} 1&1 \\ 0&3 \end{pmatrix}, \begin{pmatrix} 1&3 \\ 2&1 \end{pmatrix} \right\rangle.$$
Moreover, we have $H_i \simeq \pi_3(H_i) \simeq \pi_4(H_i)\simeq D_4$, for $i=1,2$, and a computation of the genus of the modular curves $X_{H_i}$ yields that both have genus $9$. Determining the rational points on $X_{H_i}$ for $i=1$ or $2$ would be very difficult, so instead consider the subgroup $\widetilde{H}\subseteq \GL(2,\Z/12\Z)$ such that $\pi_3(\widetilde{H})=N_s(3)$ and 
$$\pi_4(\widetilde{H})=\left\langle \begin{pmatrix} -1&0 \\ 0&-1 \end{pmatrix},\begin{pmatrix} 3&3 \\ 0&1 \end{pmatrix}, \begin{pmatrix} 1&3 \\ 2&1 \end{pmatrix} \right\rangle = \left\langle \begin{pmatrix} -1&0 \\ 0&-1 \end{pmatrix},\begin{pmatrix} 1&1 \\ 0&3 \end{pmatrix}, \begin{pmatrix} 1&3 \\ 2&1 \end{pmatrix} \right\rangle.$$ but we do not require that $\pi_3$ and $\pi_4$ are isomorphisms on $\widetilde{H}$. We note that the groups $\pi_4(H_1)$ and $\pi_4(H_2)$ are, respectively, the groups $G_{10d}$ and $G_{10b}$ in the notation of \cite{RZB}, and $\pi_4(\widetilde{H})$ is the group $G_{10}$.

Let $X_s^+(3)$ and $X_{10}$ be the modular curves that parametrize elliptic curves, respectively, with mod $3$ image conjugate to $N_s(3)$, and with mod $4$ image conjugate to $\pi_4(\widetilde{H})$ (or equivalently, conjugate to $G_{10}$). Both $X_s^{+}(3)$ and $X_{10}$ are curves of genus $0$, and the $j$-invariants of such curves are given by rational functions $j(t)$ and $j'(s)$, respectively. Above we have shown that an elliptic curve $E/\Q$ with $\Q(E[4])=\Q(E[3])$ would satisfy $j(E)=j(t_0)=j'(s_0)$ for some rational numbers $t_0$, and $s_0$. Thus, the point $(t_0,s_0)$ would satisfy the equation $j(t)=j'(s)$, that is
$$j(s)=\frac{1728\cdot (s^2-1/3)^3}{s^2(s^2+1)^2}=\frac{27(t+1)^3(t-3)^3}{t^3}=j'(t).$$
In particular:
$$(s(s^2+1))^2=s^2(s^2+1)^2 = \frac{4^3(s^2-1/3)^3t^3}{(t+1)^3(t-3)^3}=\left(\frac{4(s^2-1/3)t}{(t+1)(t-3)}\right)^3,$$
and so $s(s^2+1)$ is a perfect cube, say $s(s^2+1)=T^3$ or in projective coordinates $C: S^3+SU^2=T^3$. An affine patch of this curve is $A:1+y^2=x^3$, which is the elliptic curve with Cremona label \href{https://www.lmfdb.org/EllipticCurve/Q/144a1/}{\texttt{144a1}} and has $A(\Q)=\{\mathcal{O},(1,0)\}$. Thus, the only points on $C$ are $[T,U,S]=[0,1,0]$ and $[1,0,1]$. Thus, $s=S/U=0/1=0$ or $s=S/U=1/0$ is undefined, so $s=0$ is the only possibility. But if $s=0$, then $j(s)$ is undefined. Hence, there is no such elliptic curve $E/\Q$ with $\Q(E[4])=\Q(E[3])$. 
\end{proof}

\begin{prop}\label{prop:p=q}
Let $E/\QQ$ be an elliptic curve and $p<q$ distinct primes in $\ZZ$ such that $\QQ(E[p]) = \QQ(E[q])$. Then, it must be that $p=2$ and $q=3$.
\end{prop}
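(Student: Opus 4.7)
The plan is to split into two cases according to whether $p$ is odd or $p=2$. In both cases, the starting observation is that by the Weil pairing $\QQ(\zeta_p)\subseteq\QQ(E[p])$ and $\QQ(\zeta_q)\subseteq\QQ(E[q])=\QQ(E[p])$, so the compositum $\QQ(\zeta_{pq})$ lies in $\QQ(E[p])$ and, being abelian over $\QQ$, sits inside $K_E(p)=\QQ(E[p])\cap\QQ^{ab}$.

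For $p\geq 3$, I would invoke Proposition \ref{prop:summary} directly: $\Gal(K_E(p)/\QQ)\simeq (\ZZ/p\ZZ)^\times \times C$ with $|C|$ a divisor of $p-1$, so $[K_E(p):\QQ]\leq (p-1)^2$. Comparing this with $[\QQ(\zeta_{pq}):\QQ]=(p-1)(q-1)$ forces $q-1\leq p-1$, contradicting $p<q$. Hence no coincidence with both primes odd can occur.

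For $p=2$, Proposition \ref{prop:summary} does not apply, and I would argue by a direct count instead: $\Gal(\QQ(E[2])/\QQ)$ embeds in $\GL(2,\FF_2)\simeq S_3$, hence $[\QQ(E[2]):\QQ]$ divides $6$. Since $\QQ(\zeta_q)\subseteq\QQ(E[2])$, we must have $q-1\mid 6$, which for primes $q>2$ leaves only $q=3$ and $q=7$. The case $q=7$ would force $\QQ(E[2])=\QQ(\zeta_7)$, a cyclic extension of degree $6$, but $S_3$ contains no cyclic subgroup of order $6$; so $q=3$ is the only remaining possibility.

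All the genuine work is already done by Proposition \ref{prop:summary}, which bounds $[K_E(p):\QQ]$ by $(p-1)^2$ uniformly across the five possible types of mod-$p$ image classified in Proposition \ref{prop:max_groups}. Once that structural fact is in hand, the present proposition reduces to an elementary degree comparison together with a short $S_3$-based elimination when $p=2$, so I do not anticipate any substantial obstacle beyond assembling these ingredients.
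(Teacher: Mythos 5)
Your proposal is correct and in substance follows the same route as the paper's proof: both arguments hinge on the fact that $\QQ(\zeta_{pq})\subseteq K_E(p)=\QQ(E[p])\cap\QQ^{ab}$ together with the structural bound from Proposition~\ref{prop:summary} for odd $p$, and both reduce the case $p=2$ to subgroups of $\GL(2,\FF_2)\simeq S_3$. The only cosmetic difference is in the $p=2$ case, where you eliminate $q=7$ by noting $S_3$ has no element of order $6$, whereas the paper invokes Theorem~\ref{thm:abelian} to pass directly to the non-abelian case $\Gal(\QQ(E[2])/\QQ)\simeq S_3$; both are valid and essentially equivalent.
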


\begin{proof}
Let $K_E(p) = \QQ(E[p])\cap\QQ^{ab}$ and $K_E(q) = \QQ(E[q])\cap\QQ^{ab}$. If $\QQ(E[p]) = \QQ(E[q])$, then $K_E(p) = K_E(q)$ and $\Gal(K_E(p)/\QQ) = \Gal(K_E(q)/\QQ)$. Let $G = \Gal(K_E(p)/\QQ) = \Gal(K_E(q)/\QQ)$.

Suppose that $p>2$. Applying Proposition \ref{prop:summary} we get that $G \simeq (\ZZ/p\ZZ)^\times\times C \simeq (\ZZ/q\ZZ)^\times\times C'$ where $\#C$ divides $p-1$ and $\#C'$ divides $q-1$. Next, since $\QQ(\zeta_q)\cap \QQ(\zeta_p) = \QQ$, it must be that $G$ contains a subgroup isomorphic to $(\ZZ/p\ZZ)^\times \times(\ZZ/q\ZZ)^\times$. The only way that this can happen is if $p-1$ divides $q-1$ and $q-1$ divides $p-1$, that is $p=q$. Therefore, $p$ cannot be greater than $2$.

Next suppose $p=2$. From Theorem \ref{thm:abelian} we can assume that $\QQ(E[2])/\QQ$ is a non-abelian extension. In this case, $\Gal(\QQ(E[2])/\QQ)\simeq S_3$ and $\Gal(K_E(2)/\QQ)\simeq \ZZ/2\ZZ$. Therefore, $\QQ(\zeta_q)/\QQ$ is a quadratic extension and the only possibility is $q=3$. 
\end{proof}

Before we prove Theorem \ref{thm:main_horizontal} we need one more lemma and a simplifying remark.

\begin{lemma}\label{lem:p^n=9}
There are no elliptic curves $E/\QQ$ and $n\geq 1$ such that $\QQ(E[2^n]) = \QQ(E[9])$.
\end{lemma}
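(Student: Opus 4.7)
The plan is to combine a divisibility argument with a non-abelianness obstruction to reduce to a handful of candidate Galois groups, and then dispense with the remaining cases by a subgroup enumeration and modular-curve analysis in the spirit of the proof of Lemma \ref{lem:4=q}.

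Assume toward a contradiction that $\Q(E[2^n])=\Q(E[9])$ for some $n\geq 1$, and set $G:=\Gal(\Q(E[9])/\Q)\simeq \Gal(\Q(E[2^n])/\Q)$. By the Weil pairing, the compositum $\Q(\zeta_{9\cdot 2^n})$ lies in $\Q(E[9])$ and has degree $\varphi(9\cdot 2^n)=6\cdot 2^{n-1}$, while $G$ embeds simultaneously into $\GL(2,\Z/2^n\Z)$ and $\GL(2,\Z/9\Z)$, so
$6\cdot 2^{n-1}\leq |G|\leq \gcd(|\GL(2,\Z/2^n\Z)|,|\GL(2,\Z/9\Z)|)=\gcd(3\cdot 2^{4n-3},\,2^4\cdot 3^5).$
The right-hand side equals $6$ for $n=1$ and $48$ for $n\in\{2,3,4\}$, which in particular forces $n\leq 4$ and leaves the candidate pairs $(n,|G|)\in\{(1,6),(2,12),(2,24),(2,48),(3,24),(3,48),(4,48)\}$. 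Whenever $|G|=\varphi(9\cdot 2^n)$, the cyclotomic containment is forced to be an equality $\Q(E[9])=\Q(\zeta_{9\cdot 2^n})$, which would make $\Q(E[9])/\Q$ abelian and contradict Theorem \ref{thm:LR+GJ} (as $n=9$ is not on the list of admissible exponents). This removes $(1,6)$, $(2,12)$, $(3,24)$, and $(4,48)$, leaving only $(n,|G|)\in\{(2,24),(2,48),(3,48)\}$.

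To finish, I would rule out each of these three pairs by a Magma enumeration following the strategy of Lemma \ref{lem:4=q}. For each pair one searches through all subgroups $\widetilde{G}\subseteq \GL(2,\Z/(2^n\cdot 9)\Z)$ whose projections $\pi_{2^n}$ and $\pi_9$ are both injective with image of order $|G|$ and which satisfy the standard necessary conditions for being the image of a Galois representation of an elliptic curve over $\Q$: $\det(\widetilde{G})=(\Z/(2^n\cdot 9)\Z)^\times$, and $\widetilde{G}$ contains an element of trace $0$ and determinant $-1$ (the image of complex conjugation). The candidates are very rigid, because the joint character $(\det\rho_{E,2^n},\chi_9)$ already accounts for almost all of $|G|$, forcing $[G,G]$ to have order at most $4$. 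For each surviving $\widetilde{G}$ one then computes the associated modular curve $X_{\widetilde{G}}$ and verifies that it has no non-cuspidal, non-CM rational points, mirroring the treatment of $X_{H_1}$, $X_{H_2}$ in the proof of Lemma \ref{lem:4=q}.

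The main obstacle is this final step: for the $(n,|G|)=(2,48)$ and $(3,48)$ cases the associated modular curves may well have positive genus, analogous to the genus-$9$ curves $X_{H_i}$ appearing in Lemma \ref{lem:4=q}, so one cannot immediately conclude from the image classification alone and must either use an auxiliary intermediate-curve trick like the use of $S^3+SU^2=T^3$ in the proof of Lemma \ref{lem:4=q}, or exhibit a direct local obstruction at $2$ or $3$ (both of which are forced to be primes of bad reduction for $E$, since $\Q(\zeta_9)/\Q$ is totally ramified at $3$ and $\Q(\zeta_{2^n})/\Q$ is ramified at $2$ for $n\geq 2$, whereas Néron--Ogg--Shafarevich forces the $2^n$-division field to be unramified at primes of good reduction not equal to $2$, and similarly at $3$).
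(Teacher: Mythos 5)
Your outline runs parallel to the paper's proof — both reduce to a finite list of candidate Galois groups and finish with a Magma subgroup search — but the paper's initial reduction is sharper and its endgame simpler than you anticipate. Where you bound $n\leq 4$ by comparing $\varphi(9\cdot 2^n)$ against $\gcd\bigl(|\GL(2,\ZZ/2^n\ZZ)|,\,|\GL(2,\ZZ/9\ZZ)|\bigr)$ and then trim four cases by the abelianness criterion, the paper instead computes the abelianizations of the admissible subgroups of $\GL(2,\ZZ/9\ZZ)$ (they are $\ZZ/6\ZZ$, $\ZZ/6\ZZ\times\ZZ/2\ZZ$, $\ZZ/6\ZZ\times\ZZ/3\ZZ$, $\ZZ/6\ZZ\times\ZZ/6\ZZ$) and observes that the abelianization must contain $\Gal(\QQ(\zeta_{2^n},\zeta_9)/\QQ)$; since $(\ZZ/2\ZZ)^2\times\ZZ/6\ZZ$ is not a subgroup of any of these, $n\leq 2$ immediately, and in particular your remaining case $(3,48)$ is dead on arrival by this same computation. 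The main point I would stress, though, is that the obstacle you flag at the end is imaginary: for $n=2$ the paper's search compares abelianizations of mod-$4$ and mod-$9$ images subject to the full-determinant and complex-conjugation constraints, finds that only $\ZZ/2\ZZ\times\ZZ/6\ZZ$ is common (because $\QQ(i,\zeta_9)$ must sit inside $\QQ(E[4])=\QQ(E[9])$), and then checks directly that no subgroup of $\pi_2^{-1}(C_{ns}(2))\subseteq\GL(2,\ZZ/4\ZZ)$ with those properties is isomorphic to any admissible subgroup of $\pi_3^{-1}(B(3))$ or $\pi_3^{-1}(N_{ns}(3))\subseteq\GL(2,\ZZ/9\ZZ)$. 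The contradiction is purely group-theoretic; one never has to compute a modular curve, let alone worry about its genus, so no analogue of the $S^3+SU^2=T^3$ trick from Lemma~\ref{lem:4=q} is needed here. Your Magma enumeration, if actually carried out, would terminate on its own.
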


\begin{proof}
Suppose $E/\QQ$ is an elliptic curve and $n\geq 1$ such that $\QQ(E[2^n]) = \QQ(E[9])$. By Theorem \ref{thm:abelian}, it cannot happen if $\Q(E[9])$ is abelian over $\Q$, so we shall assume that $\Q(E[9])/\Q$ is non-abelian. A Magma search on subgroups $G\subseteq \GL(2,\Z/9\Z)$ shows that if $G$ is a non-abelian subgroup with full determinant map and an element corresponding to complex conjugation, then $G/[G,G]$ is isomorphic to one group in the set
$$S = \{ \ZZ/6\ZZ, \ZZ/6\ZZ\times \ZZ/2\ZZ, \ZZ/6\ZZ\times \ZZ/3\ZZ, \ZZ/6\ZZ\times \ZZ/6\ZZ\}.$$
Since $\QQ(\zeta_{2^n}) \cap \QQ(\zeta_9) = \QQ$, if $\QQ(\zeta_{2^n})\subseteq \QQ(E[9])$ and $n\geq 2$ then it follows that $(\Z/2\Z \times (\Z/2\Z)^{n-2})\times \Z/6\Z$ must be a subgroup of $G/[G,G]$. Thus, $n\leq 2$.

If $n =1$, then  $\QQ(\zeta_9) \subseteq \QQ(E[9]) = \QQ(E[2])$ and since $[\QQ(E[2]) :\QQ]\leq  6$, it must be that $\QQ(E[2]) = \QQ(\zeta_9)$ which is a contradiction because we have assumed $\Q(E[9])$ was non-abelian. 

Suppose next that $n=2$. In this case a Magma search on subgroups $G\subseteq \GL(2,\Z/4\Z)$ shows that if $G$ is a non-abelian subgroup with full determinant map and an element corresponding to complex conjugation, then $G/[G,G]$ is isomorphic to one group in the set
$$T = \{ \ZZ/2\ZZ, (\ZZ/2\ZZ)^2, (\ZZ/2\ZZ)^3, \ZZ/2\ZZ\times\ZZ/4\ZZ, \ZZ/6\ZZ, \ZZ/2\ZZ\times\ZZ/6\ZZ \}.$$
Comparing the lists $S$ and $T$, and using the fact that $\QQ(i,\zeta_{9})\subseteq \QQ(E[9]) = \QQ(E[4])$, we see that the only possibility is that, if we write $G_E(4) =\Im\rho_{E,4}$ and $G_E(9) = \Im\rho_{E,9}$, then 
$$G_E(4)/[G_E(4),G_E(4)] \simeq G_E(9)/[G_E(9),G_E(9)] \simeq \ZZ/2\ZZ\times\ZZ/6\ZZ.$$
Let $\pi_2\colon \GL(2,\ZZ/4\ZZ)\to\GL(2,\ZZ/2\ZZ)$ and $\pi_3\colon \GL(2,\ZZ/9\ZZ)\to\GL(2,\ZZ/3\ZZ)$ be the standard reductions maps and let 
\begin{align*}
C_{ns}(2) &= \left\langle \begin{pmatrix} 0&1\\ 1&1 \end{pmatrix} \right\rangle \subseteq \GL(2,\ZZ/2\ZZ) \\
B(3) &= \left\langle  \begin{pmatrix} 1&1\\ 0&1 \end{pmatrix}, \begin{pmatrix} 2&0\\ 0&1 \end{pmatrix}, \begin{pmatrix} 1&0\\ 0&2 \end{pmatrix} \right\rangle \subseteq \GL(2,\ZZ/3\ZZ) \\
N_{ns}(3) &= \left\langle \begin{pmatrix} 1&0\\ 0&2 \end{pmatrix}, \begin{pmatrix} 2&1\\ 2&2 \end{pmatrix} \right\rangle \subseteq \GL(2,\ZZ/3\ZZ).
\end{align*}
Then, a Magma search among subgroups of $\GL(2,\Z/4\Z)$ and $\GL(2,\Z/9\Z)$ shows that $G_E(4)$ must be a subgroup of $\pi_2^{-1}(C_{ns}(2))$ and $G_E(9)$ is a subgroup of either $\pi_3^{-1}(B(3))$ or $\pi_3^{-1}(N_{ns}(3))$. Checking the subgroups of each of the possibilities, we see that there are no subgroups of $\pi_2^{-1}(C_{ns}(2))$, with full determinant and an element that has determinant $-1$ and trace $0$, that are isomorphic to a subgroup of $\pi_3^{-1}(B(3))$ or $\pi_3^{-1}(N_{ns}(3))$ and thus it is not possible for $\QQ(E[9]) = \QQ(E[4])$. 
\end{proof}

\begin{remark}\label{rmk:reduction}
Before starting the proof of Theorem \ref{thm:main_horizontal} we notice that Proposition \ref{prop:roots_in_smaller_prime_div_field} and  Lemma \ref{lem:p^n=9} allows us to reduce the problem considerably. If $E/\QQ$, $p,q,m,$ and $n$ are as in the statement of Theorem \ref{thm:main_horizontal}, then we must have that $\QQ(\zeta_{q^m})\subseteq \QQ(E[p^n])$. But by Proposition \ref{prop:roots_in_smaller_prime_div_field} this means that either $m=1$ or $m=2$, and $p=2$ and $q=3$, but $\Q(E[9])=\Q(E[2^n])$ cannot occur by Lemma \ref{lem:p^n=9}. Thus, we must have $m=1$.
\end{remark}

\begin{proof}[Proof of Theorem \ref{thm:main_horizontal}]
Proposition \ref{prop:summary} implies parts (2) and (3) of the theorem, so it remains to show (1).  From Remark \ref{rmk:reduction},  we only have to consider the case of $m=1$. That is, we suppose that $E/\QQ$ is an elliptic curve, $p<q$ primes in $\ZZ$, and $n\geq 1$ such that $\QQ(E[p^n]) = \QQ(E[q])$, and we aim to show that $p^n =2$ and $q=3$. By Theorem \ref{thm:abelian}, we may assume $\Q(E[q])/\Q$ is non-abelian.

Le $q>p\geq 2$.  Since $\QQ(E[p^n]) = \QQ(E[q])$, it must be that $\QQ(\zeta_{p^n})\subseteq \QQ(E[q])$. From Proposition \ref{prop:summary} and the work supporting its proof, and since $q\geq 3$,  if $K_E(q) = \QQ(E[q])\cap\QQ^{ab}$, then
$$\Gal(K_{E}(q)/\QQ)\simeq \begin{cases}
 (\ZZ/q\ZZ)^\times & E\hbox{ is surjective or exceptional at $q$,}\\
 (\ZZ/q\ZZ)^\times \times \ZZ/2\ZZ & E\hbox{ is Cartan at $q$,}\\
 (\ZZ/q\ZZ)^\times \times C & E\hbox{ is Borel at $q$,}\\
 \end{cases}
$$
where $C$ is a cyclic group of order dividing $q-1$. Again using the fact that $\QQ(\zeta_q)$ and $\QQ(\zeta_{p^n})$ intersect trivially, it follows that $\Gal(\QQ(\zeta_{p^n})/\QQ) \simeq C$ for some cyclic group of order dividing $q-1$. Moreover, if $E$ is not Borel at $q$, then $C\simeq \ZZ/2\ZZ$. In the case that $E$ does not have a $q$-isogeny, the only prime-powered cyclotomic fields that are trivial or quadratic fields are $p^n = 2,3, 4$. So in this case we would have that $\Q(E[2])=\Q(E[q])$ (and therefore $q=3$ is the only possibility), or $\QQ(E[3]) = \QQ(E[q])$, or $\QQ(E[4]) = \QQ(E[q])$, but Lemmas \ref{lem:3=q} and \ref{lem:4=q} show the latter two cases cannot happen.

Suppose then that $E$ is Borel at $q$. First note that if $p=2$ and $n>2$, then $\Gal(\Q(\zeta_{p^n})/\Q)$ is not cyclic and therefore we reach a contradiction because $C$ is cyclic (hence the only possibilities would be $\Q(E[2])=\Q(E[q])$ or $\Q(E[4])=\Q(E[q])$ which have been already discussed above and lead to $\Q(E[2])=\Q(E[3])$ which we discuss below). Thus, assume $p>2$. Since $E$ is Borel at $q$, it follows that $[\Q(E[q]):\Q]$ is a divisor of $(q-1)^2\cdot q$. We distinguish cases according to the type of image modulo $p$. Let $G_p = \Im\rho_{E,p}\subseteq \GL(2,\Z/p\Z)$. 
\begin{itemize}
	\item Suppose $\rho_{E,p}$ is surjective, so that $G_p=\GL(2,\Z/p\Z)$. Then, $[\Q(E[p^n]):\Q]$ is a divisor of $[\Q(E[p]):\Q]\cdot p^k=(p^2-1)(p^2-p)p^k$ for some $k\geq 1$ (this follows from the fact that the kernel of the map $\GL(2,\Z/p^n\Z)\to \GL(2,\Z/p^{n-1}\Z)$ is of size $p^4$, and $\# \GL(2,\Z/p\Z)=(p^2-1)(p^2-p)$). If $\Q(E[p^n])=\Q(E[q])$, and $q$ divides $d_q=[\Q(E[q]):\Q]$, then $q$ divides $(p^2-1)(p^2-p)p^k = (p-1)^2(p+1)p^{k+1}$, but $p<q$ so this is impossible. This would imply that $d_q$ is a divisor of $(q-1)^2$, and $\Q(E[q])/\Q$ is in fact abelian (the mod $q$ image would be contained in a split Cartan subgroup). However we have assumed  $\Q(E[q])/\Q$  is non-abelian.
	\item Suppose the image of $\rho_{E,p}$ is exceptional. Then, Prop. \ref{prop-exceptional} shows that  $\QQ(\zeta_{q^m})\cap \QQ(E[p^n])$ is trivial for any $m\geq 1$ and any prime $q\neq p$, and therefore $\Q(E[q])=\Q(E[p^n])$ would be impossible.
	\item Suppose the image of $\rho_{E,p}$ is contained in the normalizer of a (split or non-split) Cartan subgroup of $\GL(2,\Z/p\Z)$. Then, $[\Q(E[p^n]):\Q]$ is a divisor of $2(p^2-1)p^k$ or $2(p-1)^2p^k$, for some $k\geq 1$ (because $\# C_{ns}(p)=p^2-1$ and $\# C_s(p)=(p-1)^2$). Thus, if $\Q(E[p^n])=\Q(E[q])$, and $q$ divides $d_q=[\Q(E[q]):\Q]$, then $q$ divides $2(p+1)(p-1)^3p^k$, but $p<q$ so this is impossible. As before, this would imply that $\Q(E[q])/\Q$ is abelian, a contradiction. 
\end{itemize}
Hence, we have reached a contradiction in every case, and therefore $\Q(E[p^n])=\Q(E[q])$ is impossible.

All that is left to complete the proof of Theorem \ref{thm:main_horizontal} is to parametrize all the elliptic curves that have $\QQ(E[2]) = \QQ(E[3])$. To do this we search $\GL(2,\ZZ/6\ZZ)$ for subgroups that have surjective determinant maps and such that the reductions maps $\pi_2\colon\GL(2,\ZZ/6\ZZ) \to \GL(2,\ZZ/2\ZZ)$ and $\pi_3\colon\GL(2,\ZZ/6\ZZ) \to \GL(2,\ZZ/3\ZZ)$ induce isomorphisms when restricted to the given subgroup. The search yields two possibilities for $\Im\rho_{E,6}$, namely 
$$H_1 = \left\langle \begin{pmatrix}5&5\\ 0&1  \end{pmatrix},\begin{pmatrix}2&5\\ 1&3  \end{pmatrix} \right\rangle \hbox{ and } H_2 = \left\langle \begin{pmatrix} 1&1 \\ 0&5  \end{pmatrix},\begin{pmatrix}2&5\\ 1&3  \end{pmatrix} \right\rangle.$$
Elliptic curves with $\Im\rho_{E,6}$ in $H_1$ have a rational point of order $3$, while elliptic curves with $\Im\rho_{E,6}$ in $H_2$ are a twist of the previous curves a curve by $-3$. Checking the genus of the corresponding modular curves $X_{H_i}$ using code from \cite{SZ16} we see that they are both genus $0$ and since we have seen an example of this exact image, we know that $X_{H_1}$ (and $X_{H_2}$) must have (infinitely many) rational points. Computing a model for $X_{H_1}$, we get exactly the elliptic curve over $\QQ(t)$ that is in the statement of the theorem, and the twist by $-3$ produces the parametrization of elliptic curves with image $H_2$. This completes the proof of the theorem.
\end{proof}

\section{Proofs of Theorem \ref{thm-whatpairs} and Corollary  \ref{thm-whatabout3}}\label{sec-whatpairs} 

\subsection{Proof of Theorem \ref{thm-whatpairs}} In this section we first provide a proof of Theorem \ref{thm-whatpairs}. Let $2\leq m < n \leq 10$, and suppose that $E/\Q$ is an elliptic curve such that $\Q(E[m])=\Q(E[n])$. We have already seen that $(m,n)\in \{(2,3),(2,4),(2,6),(3,6)\}$ are possible, in Theorems \ref{thm:main_vertical} and \ref{thm:main_horizontal}  (note that $\Q(E[2])=\Q(E[3])$ also implies that $\Q(E[2])=\Q(E[6])$). Moreover, Theorem \ref{thm:main_horizontal} shows that if $\Q(E[p^a])=\Q(E[q^b])$, then $p^a=2$ and $q^b=3$. Hence, the pairs
$$\{(3,4),(2,5),(3,5),(4,5),(2,7),(3,7),(4,7),(5,7),(3,8),(5,8),(7,8),(2,9),(4,9),(5,9),(7,9),(8,9)\}$$
do not occur. Theorem \ref{thm:main_vertical} says $(4,8)$ and $(3,9)$ do not occur, and also note that $(2,8)$ would imply $(4,8)$, so $(2,8)$ does not occur either.

In order to rule out the pairs in the set
$$L=\{(5,6),(2,10),(3,10),(4,10),(6,10),(7,10),(9,10) \}$$
we have used Magma to compute all the possible subgroups $G_m\subseteq \GL(2,\Z/m\Z)$ and $G_n\subseteq \GL(2,\Z/n\Z)$ which could correspond to images of $\rho_{E,m}$ and $\rho_{E,n}$ respectively. In particular, we find all subgroups with full determinant, and such that they contain an element of determinant $-1$ and zero trace, and then checked that there is no possible isomorphism $G_m\cong G_n$. Therefore $(m,n)\in L$ is impossible. 

Similarly, when $(m,n)= (8,10)$ we have computed all possibilities for $G_m$ and $G_n$, and in this case there are pairs such that $G_m\cong G_n$, but in all such cases $G_n$ is an abelian group. In particular, $\Q(E[10])/\Q$ would be abelian, but the main result of \cite{LR+GJ} shows that this is impossible. 

Finally, in the case of $(m,n)=(6,7)$, we have computed all the possible pairs $G_m\cong G_n$ but, in addition, checked that $G_7$ is none of the subgroups described in \cite{Sutherland2}, which correspond to one of the possible mod-$7$ images that occur for elliptic curves over $\Q$. Thus, $(6,7)$ cannot occur.

In summary we have shown that the only possibilities are
$$(m,n)\in \{(2,3),(2,4),(2,6),(3,6),(4,6),(6,8),(6,9),(5,10)\},$$
as claimed. This concludes the proof of Theorem \ref{thm-whatpairs}.

\subsection{Proof of Corollary \ref{thm-whatabout3}} 
Next we provide a proof of Corollary \ref{thm-whatabout3}. Let $p$ be a prime, and let $m\geq 2$ be an integer divisible by $q^n$ for some odd prime $q$ and $n\geq 1$, such that $\varphi(q^n)$ does not divide $p-1$. Let us suppose for a contradiction that $\Q(E[p])=\Q(E[m])$. Since $q^n$ divides $m$, then $\Q(\zeta_{q^n})\subseteq \Q(E[q^n])\subseteq \Q(E[m])$, and therefore $\Q(\zeta_{q^n})\subseteq \Q(E[p])$ as well. By Prop. \ref{prop:summary} we must have that $\varphi(q^n)$ is a divisor of $p-1$, which contradicts our hypothesis. This concludes the proof of the theorem.

\end{document}